\documentclass[12pt, reqno, a4paper,oneside]{amsart}
\usepackage{graphicx}
\usepackage{subfig}
\usepackage{color}
\usepackage{xcolor}
\usepackage[top=2.5cm,bottom=2.0cm,left=2.0cm,right=2.0cm]{geometry}
\usepackage{amsfonts, amsmath, amssymb, amsbsy, amsthm}
\usepackage{environments}
\usepackage{mathrsfs}
\usepackage{listings}
\usepackage{algpseudocode}
\usepackage{algorithm, algorithmicx}
\usepackage{diagbox}
\numberwithin{theorem}{section}
\numberwithin{equation}{section}

\lstset{basicstyle=\ttfamily}

\definecolor{yscol}{HTML}{6622AA}
\definecolor{hwcol}{rgb}{0, 0, 0.9}
\definecolor{mlcol}{rgb}{0, 0.7, 0}
\definecolor{todocol}{rgb}{0.0, 0.4, 0.0}

\title[ \ ]{Adaptive Quasicontinuum Methods and Simulations for Crystal Defects with a Theory based Unified a Posteriori Error Estimate}


\author{Hao Wang}
\address{Hao Wang\\
	School of Mathematics\\
	Sichuan University\\
	No. 24 Yihuan Road\\
	Chengdu\\
	China
}
\email{wangh@scu.edu.cn}

\author{Yangshuai Wang}
\address{Yangshuai Wang\\
	Department of Mathematics\\
	Faculty of Science\\
	National University of Singapore\\
	10 Lower Kent Ridge Road\\
	Singapore
}
\email{yswang@nus.edu.sg}


\date{\today}

\begin{document}
	
	\maketitle
	
\renewcommand{\cases}[1]{\left\{ \begin{array}{rl} #1 \end{array} \right.}
\newcommand{\smfrac}[2]{{\textstyle \frac{#1}{#2}}}
\newcommand{\myvec}[1]{\left[ \begin{vector} #1 \end{vector} \right]}
\newcommand{\mymat}[1]{\left[ \begin{matrix} #1 \end{matrix} \right]}

\def\Xint#1{\mathchoice
{\XXint\displaystyle\textstyle{#1}}%
{\XXint\textstyle\scriptstyle{#1}}%
{\XXint\scriptstyle\scriptscriptstyle{#1}}%
{\XXint\scriptscriptstyle\scriptscriptstyle{#1}}%
\!\int}
\def\XXint#1#2#3{{\setbox0=\hbox{$#1{#2#3}{\int}$ }
\vcenter{\hbox{$#2#3$ }}\kern-.6\wd0}}
\def\mint{\Xint-}

\def\b{\big}
\def\B{\Big}
\def\bg{\bigg}
\def\Bg{\Bigg}


\def\diam{{\textrm{diam}}}
\def\conv{{\textrm{conv}}}
\def\t{\top} 
\def\sign{{\textrm{sgn}}}
\def\id{{\textrm{id}}}
\def\supp{{\textrm{supp}}}
\def\diam{{\textrm{diam}}}

\def\R{\mathbb{R}}
\def\N{\mathbb{N}}
\def\Z{\mathbb{Z}}
\def\C{\mathbb{C}}
\def\bbV{\mathbb{V}}

\def\WW{W}
\def\CC{C}
\def\HH{H}
\def\LL{L}
\def\DD{{D}'}
\def\Ys{\mathscr{Y}}

\def\WWh{\dot{W}}
\def\Ycb{Y}
\def\WWhz{\dot{W}_0}
\def\Ycbz{\Ycb_0}

\def\dx{{\,{\textrm{d}}x}}
\def\dy{\,{\textrm{d}}y}
\def\dz{\,{\textrm{d}}z}
\def\dr{\,{\textrm{d}}r}
\def\dt{\,{\textrm{d}}t}
\def\ds{\,{\textrm{d}}s}
\def\dd{\textrm{d}}
\def\pp{\partial}
\def\dV{\,\textrm{dV}}
\def\dA{\,{\textrm{dA}}}
\def\db{\,{\textrm{db}}}
\def\dlam{\,{\textrm{d}}\lambda}

\def\<{\langle}
\def\>{\rangle}

\def\ol{\overline}
\def\ul{\underline}
\def\ot{\widetilde}
\newcommand{\ut}[1]{\underset{\widetilde{\hspace{2.5mm}}}{#1}}

\def\mA{{\textsf{A}}}
\def\mB{\textsf{B}}
\def\mC{\textsf{C}}
\def\mF{\textsf{F}}
\def\mG{\textsf{G}}
\def\mH{\textsf{H}}
\def\mI{\textsf{I}}
\def\mJ{\textsf{J}}
\def\mP{\textsf{P}}
\def\mQ{\textsf{Q}}
\def\mR{\textsf{R}}
\def\mM{\textsf{M}}
\def\mS{\textsf{S}}
\def\mO{\textsf{0}}
\def\mL{\textsf{L}}

\def\sym{\textsf{sym}}
\def\tr{\textsf{tr}}
\def\el{\textsf{el}}

\def\bfa{\textbm{a}}
\def\bfg{\textbm{g}}
\def\bfrho{\mathbm{\rho}}
\def\bfv{\textbm{v}}
\def\bfh{\textbm{h}}
\def\bfO{\textbm{0}}

\def\bbA{\mathbb{A}}
\def\bbB{\mathbb{B}}
\def\bbC{\mathbb{C}}
\def\bbI{\mathbb{I}}

\def\Hs{\mathcal{H}}


\newcommand{\transpose}{{\!\top}}
\newcommand{\Da}[1]{D_{\!#1}}
\newcommand{\Dc}[1]{\D_{#1}}
\def\D{\nabla}
\def\del{\delta}
\def\ddel{\delta^2}
\def\dddel{\delta^3}

\def\loc{\textrm{loc}}

\def\qc{\textrm{qc}}
\def\c{\textrm{c}}
\def\h{\textrm{h}}

\def\eps{\varepsilon}
\def\tot{\textrm{tot}}
\def\cb{\textrm{cb}}
\def\a{\textrm{a}}
\def\c{\textrm{c}}
\def\ac{\textrm{ac}}
\def\i{\textrm{i}}
\def\nn{\textrm{nn}}
\def\refl{\textrm{rfl}}
\def\qnl{\textrm{qnl}}
\def\stab{\textrm{stab}}
\def\conv{\textrm{conv}}
\def\supp{\textrm{supp}}

\def\L{\Lambda}
\def\Is{\mathcal{I}}
\def\oIs{\ol{\Is}}
\def\As{\mathcal{A}}
\def\Cs{\mathcal{C}}
\def\Fs{\mathcal{F}}
\def\Ks{\mathcal{K}}
\def\Us{\mathscr{U}}
\def\Usz{\Us_0}
\def\Ush{\dot{\Us}^{1,2}}
\def\Ushd{\dot{\Us}^{-1,2}}
\def\Usp{{\Us}^{1,p}}
\def\Usc{\Us^c}

\def\Bs{\mathcal{B}}
\def\Ls{\mathcal{L}}
\def\bbL{\mathbb{L}}

\def\yF{y_\mF}
\def\uF{u_\mF}

\def\E{\mathcal{E}}
\def\Ea{\E^\a}
\def\Eb{\E^\textrm{b}}
\def\Ha{H^\a}
\def\Ei{\E^\i}
\def\Ec{\E^\c}
\def\Eh{\E_\h}
\def\F{\mathscr{F}}
\def\Hc{H^\c}
\def\Eqnl{\E^\qnl}
\def\Hqnl{H^\qnl}
\def\Erefl{\E^\refl}
\def\Hrefl{H^\refl}
\def\Eac{\E^\ac}
\def\Hac{H^\ac}
\def\Estab{\E^\stab}
\def\Hstab{H^\stab}

\def\dW{W'}
\def\ddW{W''}

\def\RO{\mathcal{R}}

\def\Es{\Phi}

\def\Eatot{\E^\a_\tot}
\def\Eatot{\E^\c_\tot}

\def\Om{{\R^d}}
\def\Vi{V^\i}
\def\Vc{V^\c}
\def\Vs{\mathscr{V}}

\def\tily{\tilde y}
\def\tilz{\tilde z}
\def\tilu{\tilde u}
\def\tilv{\tilde v}
\def\tile{\tilde e}
\def\tilw{\tilde w}
\def\tilf{\tilde f}

\def\bary{y}
\def\barz{z}
\def\barv{v}
\def\barw{w}
\def\baru{u}
\def\bare{e}
\def\barf{f}


\def\ve{\varepsilon}
\def\L{\Lambda}
\def\La{\L^{\a}}
\def\Li{\L^{\i}}
\def\Lc{\L^{\c}}
\def\yd{y_0}
\def\Nhd{\mathcal{N}}
\def\rcut{r_{\textrm{cut}}}
\def\Rg{\mathscr{R}}
\def\Rgp{\Rg^{+}}
\def\vsig{\varsigma}
\def\T{\mathcal{T}}
\def\Tp{T^+}
\def\Tm{T^-}
\def\np{\nu^+}
\def\nm{\nu^-}
\def\T{\mathcal{T}}
\def\Th{\mathcal{T}_\h}
\def\Ta{\T_\a}
\def\Te{\mathscr{T}_\varepsilon}
\def\Fc{\mathscr{F}_\c}
\def\Fi{\mathscr{F}_\i}
\def\Fh{\mathscr{F}_\h}
\def\UsT{\Us_\h}
\def\Ih{I_\h}
\def\Ia{I_\a}
\def\Ie{I_{\varepsilon}}
\def\Tmu{\mathscr{T}_\mu}
\def\vor\textrm{vor}
\def\s{\sigma}
\def\sh{\sigma^\h}
\def\sa{\sigma^\a}
\def\sc{\sigma^\c}
\def\sac{\sigma^\ac}
\def\Oma{\Omega^\a}
\def\PO{\textrm{P}_0}
\def\O{\Omega}

\def\OmDef{\Omega^{\rm DEF}}

	\def\interface{{\rm interface}}
	\def\M{\mathcal{M}}
	\def\Use{\Us^{1,2}}
	\def\Nh{\mathcal{N}_h}
	\def\Lhom{\Lambda^{\rm hom}}
	\def\b{\rm b}

	\newtheorem{assumption}{Assumption}[section]
	\newtheorem{example}{Example}[section]
	\renewcommand{\theequation}{\arabic{section}.\arabic{equation}}
	\renewcommand{\thetheorem}{\arabic{section}.\arabic{theorem}}
	\renewcommand{\theassumption}{\arabic{section}.\arabic{assumption}}
	\renewcommand{\thedefinition}{\arabic{section}.\arabic{definition}}
	\renewcommand{\theexample}{\arabic{section}.\arabic{example}}

	\begin{abstract}
		Adaptive quasicontinuum (QC) methods are important methodologies in molecular mechanics for the simulations of materials with defects, intending to achieve the optimal balance of accuracy and efficiency on the fly. In this study, we propose a residual-force based {\it a posteriori} error estimate that is {\it simple} and is {\it unified} for consistent quasicontinuum methods, as opposed to the widely adopted residual-stress based {\it a posteriori} error estimates which are complicated and need to be derived for the particular QC method under consideration. The {\it simple and unified} formulation of the estimator, together with certain sampling techniques, leads to a highly efficient and adaptable implementation. We also prove in theory that the unified error estimator provides an upper bound of the true error. We develop adaptive algorithms based on this unified estimator and validate the algorithms by several representative quasicontinuum methods for various types of crystalline defects, in terms of convergence and efficiency. In particular, the adaptive simulations of the anti-plane crack, of which we possess little {\it a priori} knowledge, demonstrate the necessity and significance of the proposed {\it a posteriori} estimates and the adaptive strategies.
	\end{abstract}
	

\section{Introduction}
\label{sec:intro}

In the past two decades, multiscale coupling methods, as an important class of concurrent multiscale methods, have attracted great attention from various academic communities, including those who focus on mechanics, material science, biochemistry, and mathematics~\cite{2003_RM_ET_QCM_JCAMD, 2004_SX_TB_Bridging_Domain_CMAME, 2005_ST_TH_WKL_Bridging_Sacle_IJNME, 2009_Miller_Tadmor_Unified_Framework_Benchmark_MSMSE, 2013_ML_CO_AC_Coupling_ACTANUM, 2016_EV_CO_AS_Boundary_Conditions_for_Crystal_Lattice_ARMA}. Quasicontinuum (QC) methods, also known as atomistic-to-continuum (a/c) coupling methods, form a typical class of multiscale coupling methods in molecular mechanics (MM) aiming to achieve the (quasi-)optimal balance of accuracy and efficiency for modeling crystalline defects~\cite{1989_Kohlhoff_Coupled_AtoMod_ASM, 1996_AC_Ana_Solid_Defect_PMA, 1999_VS_RM_ETadmor_MOrtiz_AFEM_QC_JMPS, 2001_Knap_Ortiz_QCM_JMPS, 2004_Shimokawa_QCM_ErrAna_PRB, 2006_WE_JL_ZY_GRC_PRB, 2007_PL_QCL_2D_SIAMNUM, 2008_SP_BD_PB_TO_Anal_Coupling_Error_Arlequin_CMAME, 2013_ML_CO_BK_BQCE_CMAME, 2013_QY_EB_AT_Multiresolution_MM_CMAME, 2014_CO_LZ_GRAC_Coeff_Optim_CMAME, 2014_DO_PB_ML_Opt_Based_AtC_SINNUM, 2015_JA_GV_DK_Summation_QCE_JMPS, 2016_CO_LZ_GForce_Removal_SISC}. The fundamental idea behind the QC methods is to employ the relatively more accurate molecular mechanics model in the immediate vicinity of localized defects (also termed as the atomistic region), while utilizing the continuum model, such as the Cauchy-Born approximation, in the elastic far field (also termed as the continuum region) .

The modeling and {\it a priori} analysis for different QC methods have received extensive and comprehensive investigations \cite{2008_MD_ML_Ana_Force_Based_QC_M2NA, 2009_PM_ZY_1D_QC_Nonlocal_MMS, 2009_MD_ML_Optimal_Order_SIMNUM, 2011_CO_1D_QNL_MATHCOMP, 2011_CO_HW_QC_A_Priori_1D_M3AS, 2012_JL_PM_Convergence_BQCF_3D_No_Defects_CPAM, 2012_CO_LZ_GRAC_Construction_SIAMNUM, 2013_CO_AS_ACC_2D_MATHCOMP, 2014_CO_AS_LZ_Stabilization_MMS, 2016_XL_CO_AS_BK_BQC_Anal_2D_NUMMATH, 2016_CO_LZ_GForce_Removal_SISC, 2016_EV_CO_AS_Boundary_Conditions_for_Crystal_Lattice_ARMA, 2021_LF_LZ_3D_BGFC_CICP}. We also refer to~\cite{2009_Miller_Tadmor_Unified_Framework_Benchmark_MSMSE, 2020_EG_Roadmap_Multiscale_Modeling_MSMSE} for the extensive overview and benchmarking, and~\cite{2013_ML_CO_AC_Coupling_ACTANUM} for the framework of rigorous {\it a priori} error analysis. However, it is equally important for the QC methods to be self-adaptive. This is because the material systems may be so complicated that there is no enough {\it a priori} knowledge to guide the allocation of the atomistic and continuum regions as well as the construction of the appropriate mesh structures so that the optimal balance between accuracy and efficiency is achieved. Therefore, the {\it a posteriori} estimates and the corresponding adaptive algorithms are essential for the real-world applications of the QC methods \cite{2016_IT_JA_LM_DK_QCM_Coarse_Grain_IJNME, 2016_EB_TC_Adapt_Molecular_Mechanics_CMAME}.

There are two major approaches for the theory based {\it a posteriori} error estimates and both of which have been employed by the adaptive QC methods. The first one is the goal-oriented approach which aims for {\it a small error in a given functional of the solution}~\cite{2007_DB_FEM} often known as the {\it quantities of interest} \cite{2000_MA_TO_a_Post_Est_FEM, 2003_WB_RR_Adaptive_FEM, 2002_TO_SP_Est_Modeling_Error_JCP, 2006_TO_SP_AR_PB_MM_Adaptive_Model_SISC}. Adaptive QC methods based on such approach have been applied in the adaptive simulations of the one-dimensional Frenkel-Kontorova model \cite{2007_MA_ML_Goal_Oriented_Adaptive_AC_IJMCE, 2008_MA_ML_Adaptiv_AC_FK_Model_MMS, 2008_MA_ML_Goal_Oriented_Mesh_Refinement_AC_CMAME} and the three dimensional nanoindentation problems \cite{2006_SP_PB_TO_Error_Control_Molecular_Statics_IJMCE}, and later the adaptive modeling error control for the method based on the Arlequin framework \cite{2008_SP_BD_PB_TO_Anal_Coupling_Error_Arlequin_CMAME, 2009_SP_LC_BD_PB_Adaptive_AC_Model_Error_2D_CMAME}. The primary disadvantage of this approach is that a good estimate in the quantities of interest may not be easy to obtain, which results in the difficulty for proper stopping criterions of the adaptive processes \cite{2011_BD_LC_TO_SP_Adaptive_AC_Optimal_Control_CMAME}. 

The present work concentrates on the second approach which is the residual based {\it a posteriori} error estimate that aims for {\it a small norm of the error}~\cite{2007_DB_FEM}. This approach was first adopted for the {\it a posteriori} error control problem for a consistent QC method in one dimension  in~\cite{2011_CO_1D_QNL_MATHCOMP, 2014_CO_HW_A_Post_ACC_IMANUM}. It was then extended to geometric reconstruction atomistic-to-continuum (GRAC) method for two dimensional point defects with nearest neighbor interactions in~\cite{2018_HW_ML_PL_LZ_A_Post_GRAC_2D_SISC} and later generalized to finite range interactions in~\cite{2020_ML_PL_LZ_Finite_Range_A_Post_2D_CICP}. However, it was later noticed that the residual based estimates suffer from high computational cost which essentially stems from the mismatch of the lattices and the meshes for models of different scales. Our recent effort has been devoted to dealing with this problem and results in a significant improvement in efficiency and extensions of adaptivity simulations to more realistic defects such as dislocations and interactive vacancies \cite{2023_YW_HW_Efficient_Adaptivity_AC_JSC}.

Despite the possible further extensions of the existing residual based approach to a wider range of QC methods and types of defects, there are critical limitations for such approach. The first one is that the {\it a posteriori} error estimator depends on the specific coupling scheme which leads to little reusability of the code and thus the inefficiency of implementation. The second one is that the formulations of the estimators are often involved which results in the high cost in both implementation and computation. In particular, the residual based {\it a posteriori} error estimate for the blended QC methods, which are widely adopted in multiscale science and engineering \cite{2003_TB_SX_Couple_CM_MM_IJMCE, 2004_SX_TB_Bridging_Domain_CMAME, 2006_WL_HP_DQ_EK_HK_GW_Bridge_Scale_CMAME, 2008_PB_BD_NE_TO_SP_Arlequin_AC_CM, 2008_SB_MP_PB_MG_AC_Blending_MMS, 2020_EG_Roadmap_Multiscale_Modeling_MSMSE}, are absent in more than one dimension due to such complication.



The purpose of the current work is to address the critical limitations just described. By deriving an error estimator based on the {\it residual force}, we are able to give a unified {\it a posteriori} error estimate for general consistent QC methods. The unified error estimator is then employed to design the adaptive algorithms for several representative QC methods. These algorithms are then implemented for the adaptive simulations of a few crystalline defects with practical importance. To be precise, our contribution lies in the following aspects.

First, we derive a unified {\it a posteriori} error estimate based on the {\it residual force} as apposed to previous estimates which are based on the {\it residual stress} \cite{2011_CO_1D_QNL_MATHCOMP, 2014_CO_HW_A_Post_ACC_IMANUM, 2018_HW_ML_PL_LZ_A_Post_GRAC_2D_SISC, 2020_ML_PL_LZ_Finite_Range_A_Post_2D_CICP, 2023_YW_HW_Efficient_Adaptivity_AC_JSC}. The residual-force based estimate is independent of the particular QC method we consider and we provide a theoretical framework to establish that such estimator provides an upper bound of the true error in a discrete $H^1$-norm. 

Second, we develop novel adaptive algorithms for a series of QC methods ranging from sharp interfaced to blended, energy-based to force-based. The algorithms aim to automatically adjust the mesh structures in the continuum region as well as the allocations of the atomistic region and the blending region, if applicable, on the fly. All the adaptive algorithms are essentially based on the force based unified error estimator and only algorithmic adjustments are needed. We note that theory based {\it a posteriori} error estimate and fully adaptive algorithms (taking both mesh refinement and allocation of the regions into account at the same time) for the widely adopted blended QC methods \cite{2016_XL_CO_AS_BK_BQC_Anal_2D_NUMMATH, 2013_ML_CO_BK_BQCE_CMAME, 2018_DO_XL_CO_BK_Force_Based_AC_Complex_Lattices_NUMMATH} are developed for the first time in more than 1D (cf. \cite{2019_HW_SL_FY_A_Post_QCF_1D_NMTMA, 2023_YW_LZ_Adaptive_Multigrid_AC_3D_JCP}). 

%

Third, we provide numerical validations of our error control strategies in the adaptive simulations for various types of crystalline defects, including crack, which to the best knowledge of the authors is implemented for the first time in the context of QC. The adaptive simulations demonstrate that the proposed algorithms produce optimal convergence rates of the error and (quasi-)optimal decomposition of the domain. The systematic study of the adaptive QC methods for cracks represents a pioneering effort, leveraging the attributes of the unified residual-force based {\it a posteriori} error estimator. Conversely, the {\it a priori} analysis poses challenges that require advanced technicalities to be addressed effectively. The inherent effectiveness and remarkable flexibility of this approach not only broaden the horizons but also provide a robust framework for investigating practical crystalline defects such as grain boundaries.

To ensure the clarity of presentation, we focus on the atomistic system with finite range interactions in two dimensions. However, the proposed unified framework may be potentially extended to other consistent multiscale coupling schemes and three-dimensional problems. Further investigations including more efficient strategies for other coupling schemes and the extension to realistic crystalline defects in three dimensions, such as partial dislocations and grain boundaries, are also discussed at the end of this paper.

\subsection{Outline}
\label{sec:sub:outl}
The current work is structured as follows. In Section~\ref{sec:formu}, we introduce the mathematical concept of crystalline defects and the general formulations of the atomistic and the quasicontinuum methods. The {\it a priori} estimates and the definition of consistency of the QC methods are also given in Section \ref{sec:sub:a_priori}. In Section~\ref{sec:err} we systematically derive the residual-force based error estimate along with the proof that the estimator provides an upper bound of the true approximation error. In addition, an efficient method for the evaluation of the local error contributions is given in Section~\ref{sec: local error contribution}. In Section~\ref{sec:alg}, we develop the adaptive algorithms for sharp interfaced and blended QC methods respectively based on the error estimator just derived. We note that the algorithm for adaptively allocating the blending region is developed for the first time in the literature. In Section~\ref{sec:numerics}, we present the adaptive simulations for several crystalline defects we consider and provide a comprehensive discussion and explanation of our findings. We conclude by summarizing our results and discussing possible future directions in Section~\ref{sec:con}.


\subsection{Notations}
\label{sec:sub:not}

We use the symbol $\langle\cdot,\cdot\rangle$ to denote an abstract duality
pairing between a Banach space and its dual space. The symbol $|\cdot|$ normally
denotes the Euclidean or Frobenius norm, while $\|\cdot\|$ denotes an operator
norm.
%
For $E \in C^2(X)$, the first and second variations are denoted by
$\<\delta E(u), v\>$ and $\<\delta^2 E(u) v, w\>$ for $u,v,w\in X$.
%
For second order tensors $\mA$ and $\mB$, we denote $\mA:\mB :=\sum_{i,j}\mA_{ij}\mB_{ij}$.
The closed ball with radius $r$ and center $x$ is denoted by $B_r(x)$, or $B_r$ if the center is the origin.


\section{The Atomistic Model and Quasicontinuum Methods}
\label{sec:formu}

In this section, we provide an exposition of the atomistic model and a very general formulation of the quasicontinuum methods which could accommodate a wide range of different QC methods ~\cite{2013_ML_CO_AC_Coupling_ACTANUM, 2016_XL_CO_AS_BK_BQC_Anal_2D_NUMMATH, 2016_CO_LZ_GForce_Removal_SISC, 2013_ML_CO_BK_BQCE_CMAME, 2021_LF_LZ_3D_BGFC_CICP}. We defer the presentation of the detailed formulations of specific methods to Section~\ref{sec:alg} where the adaptive strategies are developed, so that the coupling schemes and the corresponding algorithms are better related. For the sake of simplicity, we consider the single-species Bravais lattices but note that both the analysis and the algorithms given in this work can be applied to {\it multilattice} crystals \cite{2018_DO_XL_CO_BK_Force_Based_AC_Complex_Lattices_NUMMATH} with suitable minor modifications.

\subsection{Atomistic model}
\label{sec:sub:atomic}

\def\Rcore{R_{\rm DEF}}
\def\UsH{{\mathscr{U}}^{1,2}}
\def\Adm{{\rm Adm}}
\def\Lhom{{\Lambda^{\rm hom}}}

We first give a heuristic introduction to the problem we solve. Let $\L\subset \R^2$ be an infinite single lattice with defects and $y$ be a lattice function defined on $\L$ representing a deformed configuration. In molecular mechanics, we consider an atomistic energy functional of the form 
\begin{align}\label{eq: Ea_y}
\Ea(y) :=\sum_{\ell\in\L} \Phi_\ell\big(\{y(\ell+\rho)-y(\ell)\}_{\rho \in \Rg_\ell}), 
\end{align}
where $\Phi_{\ell}:(\R^2)^{\Rg_\ell}\rightarrow\R$ is the energy distributed to each atomic site or simply the site potential, and $\Rg_\ell := \{\ell'-\ell ~|~ \ell'\in \Nhd_\ell\}$ is the interaction range with an interaction neighborhood  $\Nhd_{\ell} := \{ \ell' \in \L ~|~ 0<|\ell'-\ell| \leq \rcut \}$ of a given cut-off radius $\rcut$. The atomistic problem is to locally minimize $\Ea(y)$, i.e., by denoting ``$\arg\min$'' as the set of local minimizers, we seek $y^{\rm a}$ such that
\begin{equation}\label{eq: min_Ea_y}
y^{\rm a} \in \arg\min \big\{ \E^{\rm a}(y) \big\}.
\end{equation}

However, \eqref{eq: min_Ea_y} is in fact not a meaningful problem since $\L$ is infinite. A more mathematically rigorous description of the problem is given as follows. 

The defected lattice $\L$ stems from the perfect single lattice $\Lhom:=\mA\Z^2$ for some non-singular matrix $\mA\in\R^{2\times 2}$.  The mismatch between $\L$ and $\Lhom$ represents possible defects that are often contained in some localized defect cores $\OmDef$ so that $\L \backslash \OmDef  = \Lhom \backslash  \OmDef$. We illustrate three typical defects considered in this work in Figure \ref{fig:geom_defects}.
\begin{figure}[!htb]
	\centering 
	\subfloat[Micro-crack]{
		\label{fig:geom_mcrack}
		\includegraphics[height=5cm]{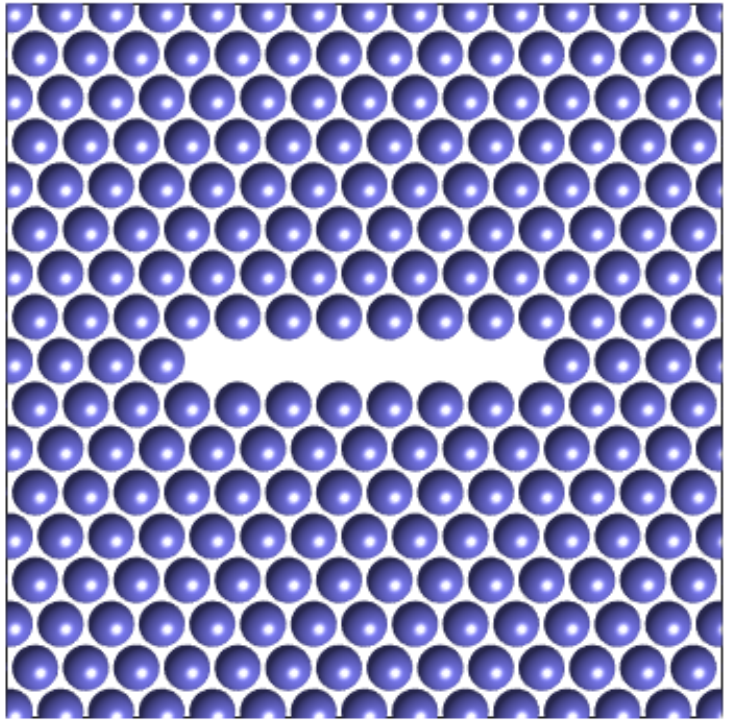}}
		~
		\subfloat[Anti-plane screw dislocation]{
		\label{fig:geom_screw} 
		\includegraphics[height=5cm]{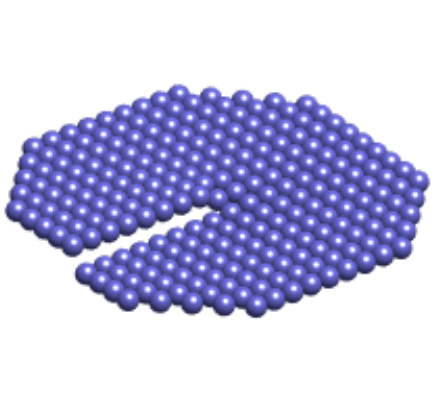}}
		~
	\subfloat[Anti-plane crack]{
		\label{fig:geom_multivac}
		\includegraphics[height=5cm]{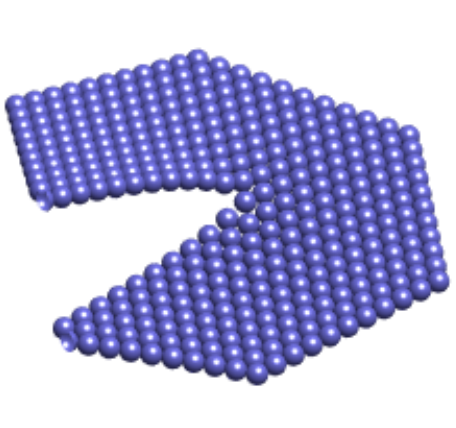}}		
	\caption{Illustration of three typical defects considered in this work.}
	\label{fig:geom_defects}
\end{figure} 

We often decomposed the deformed configuration $y \in \Us$ with $\Us:=\{v:\L\to \mathbb{R}^2\}$ as
\begin{eqnarray}\label{y-u}
y(\ell) = \mF \ell + u_0(\ell) + u(\ell),  \qquad\forall~\ell\in\Lambda,
\end{eqnarray}
where $\mF$ is a macroscopic deformation gradient, $u_0\in\Us$ is a {\it far-field predictor} resulting from the presence of the defect which is often precomputed by solving a continuum linearized elasticity (CLE) (see \ref{sec:appendixU0}), and $u\in\Us$ is a {\it corrector} which is the solution we actually seek for. 

To measure the local ``regularity" of a displacement $u\in\Us$, it is convenient to use a background mesh, for example, a {\it canonical} triangulation $\T_{\L}$ of $\R^2$ whose nodes are the reference lattice sites in $\L$, see \cite[Figure 1]{2016_EV_CO_AS_Boundary_Conditions_for_Crystal_Lattice_ARMA} for an illustration. We define $I_{\a} u$ as the standard piecewise affine interpolation of $u$ with respect to $\T_{\L}$ \cite[Appendix 1]{2018_HW_ML_PL_LZ_A_Post_GRAC_2D_SISC}. 
To be more precise, if we let $\zeta_{\ell}(x) \in W^{1, \infty}(\mathbb{R}^2; \mathbb{R})$ be the $\mathcal{P}_1$ nodal basis function associated with $\T_{\L}$, we can then extend all lattice displacement $u\in\Us$ via their nodal interpolants,
\begin{eqnarray}
\label{def: I_a}
I_{\a} u(x) := \sum_{\ell\in\L} u(\ell) \zeta_{\ell}(x).
\end{eqnarray}
When no confusion arises, we identify $u$ with its interpolation $I_{\rm a}u$ and denote the piecewise constant gradient $\nabla u := \nabla I_{\rm a} u$. We then introduce the functional space of finite-energy displacements
\begin{align}\label{space:UsH}
\UsH(\L) := \big\{u:\L\rightarrow\mathbb{R}^{d} ~\big\lvert~ \|\nabla u\|_{L^2} < \infty \big\}
\end{align}
with the associated norm $\|v\|_{\UsH} := \|\nabla v\|_{L^2}$ for $v \in \UsH(\L)$.
We also define the following subspace of compact displacements which is dense in $\UsH$ \cite[Proposition 3.3]{2013_ML_CO_AC_Coupling_ACTANUM}:
\begin{align}\label{space:Uc}
\Us^{\rm c}(\L) := \big\{u:\L\rightarrow\mathbb{R}^{d} ~\big\lvert~ \textrm{supp}(\nabla u)~~\textrm{is compact} \big\}.
\end{align}
By introducing the finite difference stencil for $u\in\Us$
\begin{align*}
Du(\ell):= \{D_\rho u(\ell)\}_{\rho \in \Rg_\ell} :=\{u(\ell+\rho)-u(\ell)\}_{\rho \in \Rg_\ell},
\end{align*}
and with a slight abuse of notation, we can redefine the atomistic energy-difference functional by
\begin{align}\label{energy-difference}
\Ea(u) &=~\sum_{\ell\in\Lambda}\Big(V_{\ell}\big(Du_0(\ell)+Du(\ell)\big)-V_{\ell}\big(Du_0(\ell)\big)\Big) =:\sum_{\ell\in\Lambda} V'_\ell\big(Du(\ell)\big), 
\end{align}
where $V_{\ell}(Du(\ell)):= \Phi(\{\mF \rho + u(\ell+\rho)-u(\ell)\}_{\rho \in \Rg_\ell})$ and $V'_{\ell}$ is the {\it first renormalization} of $V_{\ell}$ \cite{2016_CO_LZ_GForce_Removal_SISC}. 

It was shown that $\E^{\a}(u)$ is well-defined on the space $\UsH$ under suitable assumptions \cite[Section~2.1]{2013_ML_CO_AC_Coupling_ACTANUM}. The atomistic problem can then be reformulated as 
\begin{equation}\label{eq:variational-problem}
u^{\rm a} \in \arg\min \big\{ \E^{\rm a}(u)~|~ u \in \Us^{1,2}(\L) \big\},
\end{equation}
with the following first and second order optimality conditions
\begin{eqnarray}\label{eq:optimality}
\big\< \delta\Ea(u) , v\big\> = 0 , \qquad \big\< \delta^2\Ea(u) v , v\big\> \geq 0, \qquad\forall~v\in\UsH(\L) .
\end{eqnarray}
To establish the {\it a posteriori} error bounds, a stronger stability assumption is made:
\begin{assumption}\label{as:LS}
	$\exists~\gamma>0$, such that for all $v\in\UsH(\L)$, 
		$\big\< \delta^2\E^{\rm a}(u) v , v\big\> \geq \gamma \|v\|^2_{\UsH}$.
\end{assumption}
%

\subsection{The general formulation of the quasicontinuum methods}
\label{sec:sub:gen_ac}

Since the atomistic model \eqref{eq:variational-problem} is defined on an infinite domain and considers every atom as a degree of freedom with {\it nonlocal} interactions (the energy of an atom is influenced by all neighbors within its interaction range), it is not solvable in practice. To make the problem efficiently computable, the QC methods \cite{2013_ML_CO_AC_Coupling_ACTANUM, 2016_XL_CO_AS_BK_BQC_Anal_2D_NUMMATH, 2016_CO_LZ_GForce_Removal_SISC} aim to construct and solve the following approximated variational nonlinear system 
\begin{eqnarray}\label{eq:variational-problem-AC}
\<\mathcal{F}_h^{\rm qc}(u_{h}), v_h\>_{\ac} = 0, \quad \forall v_h \in \Us_h,
\end{eqnarray}
by certain approximations which are described immediately.

The first approximation is the restriction of the computational domain from infinite to a simply connected polygon $\Omega \subset \R^d$. The error committed by such restriction is called the {\it truncation error}. 

The second approximation is essentially a model reduction. Recall that the displacement is smooth in the region away from the defect core so that the {\it nonlocal} interactions may be approximated by a {\it local} continuum elasticity model in the far field with sufficient accuracy. A typical choice of the model is the Cauchy-Born nonlinear elasticity ~\cite{2007_EM_CB_StaCry_AMAS, 2013_CO_FT_Cauchy_Born_ARMA}, where the strain energy function $W:\R^2\rightarrow\R^2$ measures the energy per unit volume under a globally homogeneous deformation and is defined by
\begin{equation}\label{eq:W}
  W(\mF) := \det (\mA^{-1}) \cdot V(\mF \Rg).
\end{equation}
The definition of the {\it first renormalization} of $W$ is then given by $W'(\mF) := W(\mF) - W({\bf 0})$. 

The third approximation is a domain decomposition with further coarse-graining by the finite element discretization. We essentially decompose the computational domain $\Omega$ into the {{\it atomistic region}} $\Omega_{\a}$, the {{\it interface region}} $\Omega_{\rm i}$ and the {{\it continuum region}} $\Omega_{\c}$, so that $\Omega=\Omega_{\a} \cup \Omega_{\rm i} \cup \Omega_{\c} \subset \R^2$. We assume that the defect core is contained in the atomistic region, i.e., $\OmDef \subset \Omega_{\a}$ and the displacement $u$ is sufficiently smooth in $\Omega_{\c}$. Here $\Omega_{\rm i}$ is the region between $\Omega$ and $\Omega_{\c}$ where the transition of the {\it nonlocal} and the {\it local} models takes place. The set of core atoms can then be defined by $\L_{\a} := \L \cap \Omega_{\a}$, the set of the interface atoms by $\L_{\rm i} := \L \cap \Omega_{\rm i}$ and the set of atoms in the continuum region by $\L_{\rm c} := \L \cap \Omega_{\rm c}$. Subsequently, we can define $\T_{\rm a}$ to be the {\it canonical} triangulation induced by $\L_{\a}\cup \L_{\rm i}$, $\T_{\rm c}$ be a shape-regular simplicial partition of the continuum region and $\T_{h} = \T_{\a} \cup \T_{\c}$ be the triangulation of the computational domain for the QC methods.  

According to the three approximations, we define the {\it coarse-grained} space of displacements by
\begin{eqnarray}
\label{eq: ac_solution_space}
  \Us_{h} := \big\{ u_{h} : \Omega_{h} \to \R^2 ~\big|~
  \text{$u_{h} \in \mathcal{P}_1(\T_h)$, $u_{h} = 0$ in $\R^d \setminus \Omega_{h}$}\big\},
\end{eqnarray}
where $\Omega_h = \bigcup_{T\in \T_h}T$ if a $\mathcal{P}_1$ finite element discretization is utilized in the continuum region so that $\Us_h$ serves as an approximation of $\UsH(\L)$. 

We now illustrate the construction of $\mathcal{F}^{\rm qc}_h$ by an example of the energy-based QC methods \cite{2012_CO_LZ_GRAC_Construction_SIAMNUM, 2013_ML_CO_BK_BQCE_CMAME}. For $u_h\in\Us_h$, we define the QC energy functional
\begin{align}\label{eq:general_E_ac}
  \E^{\rm qc}_{h}(u_{h}) :=& \sum_{\ell \in \L_{\a}} V'_\ell(Du_{h}) + \sum_{\ell \in \L_{\rm i}} V^{\rm i}_\ell(Du_{h}) + \sum_{T \in \T_{c}} \omega_T \big(W'(\nabla u_{h}|_T+\nabla u_0|_T) - W'(\nabla u_{0}|_T)\big), 
\end{align}
where $V^{\rm i}_\ell \in C^k((\R^2)^{\mathcal{R}})$ is an {\it interface potential} and $\omega_T$ is the {\it effective volume} of $T$ which specify individual coupling schemes and are subtly defined to deal with the spurious forces (usually referred to as ``ghost force") around the interface where the {\it nonlocal} and {\it local} models meet. In this context, the nonlinear map $\mathcal{F}^{\rm qc}_h:\Us_h \rightarrow (\Us_h)^{*}$ in \eqref{eq:variational-problem-AC},  which is defined by the first variation of the coupling energy functional $\E^{\rm qc}_{h}$ with respect to the space $\Us_h$ that $\mathcal{F}_h^{\rm qc}:=\delta\E^{\rm qc}_h$, serves as an approximation of the first order optimality of the atomistic model \eqref{eq:optimality} and the duality $\<\cdot, \cdot\>_{\ac}$ is then induced by $\Us_h$. The energy-based QC problem we would like to solve is to find
\begin{equation}\label{eq:gen_E}
u^{\rm qc}_{h} \in \arg\min \big\{ \E^{\rm qc}_{h}(u_{h}),~u_{h} \in \Us_{h} \big\}.
\end{equation}
The solution to the energy minimization problem in \eqref{eq:gen_E} then satisfies the variational problem in \eqref{eq:variational-problem-AC}. However, it is important to note that the reverse is only true when the corresponding coupling scheme is stable~\cite{2014_CO_AS_LZ_Stabilization_MMS, 2013_ML_CO_AC_Coupling_ACTANUM}. 

We note that the general formulation \eqref{eq:variational-problem-AC} fits most (if not all) of the quasicontinuum methods. However, since our {\it a posterior} error estimates in Section \ref{sec:err} {\it do not depend on} the individual formulation of a coupling scheme, we postpone the construction detail of specific QC schemes to Section \ref{sec:alg} where the adaptive algorithms, which {\it do depend on} the corresponding coupling scheme, are developed. What essentially makes our {\it a posterior} error estimates possible is the {\it a priori} estimate/consistency of the QC method which we introduce in the immediate following section.

\subsection{A priori estimates and consistency of the quasicontinuum methods}
\label{sec:sub:a_priori}

Let $N$ denote the total number of degrees of freedom of the QC method. We say that the method is consistent if the approximation error decreases as $N$ increases. To be more precise, under certain assumptions (e.g., mesh qualities~\cite[Assumption 1]{2021_LF_LZ_3D_BGFC_CICP}), for $N$ sufficiently large, 
\begin{eqnarray}\label{eq:ap_gen_ac}
    \| u -  u^{\rm qc}_{h}\|_{\UsH}
     \leq C N^{-k}(\log N)^s,
\end{eqnarray}
where $C$ is a constant independent of any model parameters, and $s, k\geq0$ are related to the types of defects and specify the convergence rate of the approximation error with respect to $N$. We will present the {\it a priori} estimates for different methods and defects considered in this work in Table \ref{tab: decay rate} in Section~\ref{sec:alg}. We note that these {\it a priori} estimates provide a solid theoretical foundation of the current work and plays a key role in developing the {\it a posteriori} error estimate.


\section{The Unified a Posteriori Error Estimates based on Residual Forces}
\label{sec:err}

In this section, we propose a unified framework for the {\it a posteriori} error estimate which is applicable to generally consistent QC methods satisfying~\eqref{eq:ap_gen_ac}. Such estimate provides the groundwork for the adaptive algorithms for a range of QC methods in Section~\ref{sec:alg}. We essentially adapt the analytical framework originally introduced by our previous study on the adaptive QM/MM methods for simple point defects~\cite{2019_HC_ML_HW_YW_LZ_Adaptive_QMMM_CMAME} to the setting of QC and extend it to substantially more complicated defects. The framework can be summarized in four steps: the proof of the equivalence of the true approximation error and the residual in certain dual norm, the derivation of the {\it a posteriori} error estimator based on the residual forces which gives the upper bound of the residual, the estimate of the error committed by the truncation of the computational domain for different types of defects and an efficient sampling of the residual forces which forms the local error contributions. We present the four steps in detail in the following subsections. 


\subsection{Equivalence of the residual and the approximation error}
\label{sec:sub:residual_forces}

\def\sjump{\llbracket\sigma^{\b}_{\h}\rrbracket_f}
\def\dx {\rm dx}
\def\Fh {\mathcal{F}_h}
\def\Fhi{\Fh\bigcap {\rm int}(\Omega_R)}

For the simplicity of presentation, we denote the solution to the atomistic problem \eqref{eq:variational-problem} as $u$ and that to the QC problem \eqref{eq:variational-problem-AC} or \eqref{eq:gen_E} as $u_{h}$. Recall that $\Us^{\rm c}$ is dense in $\UsH$, we have $I_{\a}u_h\in\UsH$ where $I_{\a}$ is the standard piecewise affine interpolation operator defined in \eqref{def: I_a}. The residual $\mR(I_{\rm a} u_{h})$ is then defined as an operator on $\Use$ such that
\begin{equation}\label{eq:def_res}
\mR(I_{\rm a} u_{h})[v] := \< \del\Ea(\Ia u_{h}), v\>, \quad \forall v\in \Use.
\end{equation}
The theorem below shows that the dual norm of the residual $\mR(I_{\rm a} u_{h})$ gives both the upper and the lower bounds for the true approximation error $\|u - I_{\rm a} u_{h}\|_{\Use}$. 

\begin{theorem}\label{thm:res}
	Let $u$ be a strongly stable solution of \eqref{eq:variational-problem}. Suppose that $\delta\E^{\rm a}$ and $\delta^2\E^{\rm a}$ are Lipschitz continuous in $B_{r}(u)$ with uniform constants $L_1$ and $L_2$, the QC method is consistent in the sense of \eqref{eq:ap_gen_ac} and $u_{h}$ satisfies the same decay as $u$. Then for $N$ sufficiently large, there exist constants $c$ and $C$ independent of the approximation parameters such that 
	\begin{eqnarray}\label{res-bound}
	c\|u - I_{\rm a} u_{h} \|_{\UsH} \leq \| \mathsf{R}(I_{\rm a} u_{h}) \|_{(\UsH)^*} \leq C\|u - I_{\rm a} u_{h}\|_{\UsH},
	\end{eqnarray}
	where the residual $\mathsf{R}(I_{\rm a} u_{h})$ is defined by \eqref{eq:def_res}.
\end{theorem}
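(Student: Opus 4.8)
The plan is to establish the two inequalities in \eqref{res-bound} separately, exploiting the strong stability Assumption~\ref{as:LS}, the Lipschitz bounds on $\delta\E^{\rm a}$ and $\delta^2\E^{\rm a}$, and crucially the fact that both $u$ and $I_{\rm a}u_h$ lie close together once $N$ is large (by consistency \eqref{eq:ap_gen_ac} applied to $u_h$, which satisfies the same decay). First, for the lower bound $c\|u - I_{\rm a}u_h\|_{\UsH} \le \|\mathsf{R}(I_{\rm a}u_h)\|_{(\UsH)^*}$, I would write, for arbitrary $v\in\UsH$,
\begin{equation*}
\mathsf{R}(I_{\rm a}u_h)[v] = \<\delta\E^{\rm a}(I_{\rm a}u_h),v\> - \<\delta\E^{\rm a}(u),v\> = \int_0^1 \<\delta^2\E^{\rm a}(u + t(I_{\rm a}u_h - u))(I_{\rm a}u_h - u), v\>\,\dt,
\end{equation*}
using \eqref{eq:optimality} so that $\<\delta\E^{\rm a}(u),v\>=0$. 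Writing $e := I_{\rm a}u_h - u$, I would add and subtract $\<\delta^2\E^{\rm a}(u)e,v\>$; the leading term is controlled below from Assumption~\ref{as:LS} by testing with $v = e$, while the remainder is bounded using the Lipschitz continuity of $\delta^2\E^{\rm a}$ by $\tfrac12 L_2 \|e\|_{\UsH}^2$, provided $e\in B_r(u)$, which holds for $N$ large. Thus $\mathsf{R}(I_{\rm a}u_h)[e] \ge \gamma\|e\|_{\UsH}^2 - \tfrac12 L_2\|e\|_{\UsH}^3 \ge \tfrac12\gamma\|e\|_{\UsH}^2$ once $\|e\|_{\UsH}\le \gamma/L_2$, and dividing by $\|e\|_{\UsH}$ gives $\|\mathsf{R}(I_{\rm a}u_h)\|_{(\UsH)^*} \ge \tfrac12\gamma\|e\|_{\UsH}$.

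For the upper bound $\|\mathsf{R}(I_{\rm a}u_h)\|_{(\UsH)^*} \le C\|e\|_{\UsH}$, I would use the same integral identity: for any $v$ with $\|v\|_{\UsH}\le 1$,
\begin{equation*}
|\mathsf{R}(I_{\rm a}u_h)[v]| = \left| \int_0^1 \<\delta^2\E^{\rm a}(u+te)e,v\>\,\dt \right| \le \|\delta^2\E^{\rm a}(u)\|\,\|e\|_{\UsH} + \tfrac12 L_2 \|e\|_{\UsH}^2,
\end{equation*}
where the operator norm $\|\delta^2\E^{\rm a}(u)\|$ is finite (it follows from the Lipschitz bound $L_2$ together with boundedness of $\delta^2\E^{\rm a}$ at, say, the reference state, or can simply be absorbed into the Lipschitz hypothesis near $u$). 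Since $\|e\|_{\UsH}$ is uniformly bounded for $N$ large, the quadratic term is dominated by the linear one, giving the claimed bound with $C$ depending only on $\gamma, L_1, L_2$ and $r$. Taking the supremum over $v$ completes this direction. Alternatively one could invoke the inverse function theorem / Lipschitz inverse of $\delta\E^{\rm a}$ near $u$ directly; the bare-hands route above is cleaner and makes the constants transparent.

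The main obstacle is not the estimates themselves, which are routine once the geometry is in place, but rather justifying that $I_{\rm a}u_h$ genuinely lies in the ball $B_r(u)$ where the Lipschitz hypotheses apply, and more subtly that the constants $c, C$ can be taken independent of the approximation parameters. This is exactly where consistency \eqref{eq:ap_gen_ac} enters: the hypothesis that $u_h$ enjoys the same decay as $u$ forces $\|u - I_{\rm a}u_h\|_{\UsH}\to 0$ as $N\to\infty$, so for $N$ sufficiently large $\|e\|_{\UsH}$ is below every threshold used above (namely $r$, $\gamma/L_2$, and $1$), which is precisely the ``for $N$ sufficiently large'' clause in the statement. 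One technical point to handle with care is that $I_{\rm a}u_h$ is the interpolant onto the fine canonical triangulation $\T_{\L}$, not $u_h$ itself on the coarse mesh $\T_h$; but since $\delta\E^{\rm a}$ is by definition evaluated on lattice functions and $I_{\rm a}$ is the identity on the underlying lattice values, $\<\delta\E^{\rm a}(I_{\rm a}u_h),v\>$ is well-defined and the argument goes through verbatim with $I_{\rm a}u_h$ in place of a generic competitor.
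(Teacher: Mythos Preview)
Your proposal is correct and follows essentially the same route as the paper: both arguments test the residual against $e=I_{\rm a}u_h-u$, linearize via the mean value/integral form, split off $\delta^2\E^{\rm a}(u)$, and invoke Assumption~\ref{as:LS} together with the $L_2$-Lipschitz bound to get the coercivity constant $\gamma/2$. The one minor difference is in the upper bound $\|\mathsf{R}(I_{\rm a}u_h)\|_{(\UsH)^*}\le C\|e\|_{\UsH}$: the paper simply uses the first-order Lipschitz constant $L_1$ for $\delta\E^{\rm a}$ directly (one line, no need for a bound on $\|\delta^2\E^{\rm a}(u)\|$), whereas you go through the integral of $\delta^2\E^{\rm a}$ and then have to argue separately that the Hessian is bounded at $u$; both are valid, but the paper's choice is the cleaner of the two and is exactly why the hypothesis carries $L_1$ in addition to $L_2$.
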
 
\begin{proof}

Let $r>0$ satisfies $B_r(u) \subset \UsH$. By the Lipschitz continuity of $\delta\Ea$ and $\delta^2\Ea$ in $B_{r}(u)$ with uniform constants $L_1$ and $L_2$, for any $w \in B_{r}(u)$, we have
\begin{align*}
    \|\delta\Ea(u)-\delta\Ea(w)\| \leq L_1\|u-w\|_{\UsH}, \\
    \|\delta^2\Ea(u)-\delta^2\Ea(w)\| \leq L_2\|u-w\|_{\UsH}.
\end{align*}
By the first optimality condition of the atomistic problem $\<\delta\Ea(u), v\>=0$, we have that for $N$ sufficiently large
\[
\mR(I_{\rm a} u_{h})[v] = \< \del\Ea(\Ia u_{h}) -\del\Ea(u), v\> \leq L_1 \|u - I_{\rm a} u_{h}\|_{\UsH} \|v\|_{\UsH}, \quad \forall v\in \Use.
\]

To estimate $\|u - I_{\rm a}u_h\|_{\UsH}$, we obtain from the {\it a priori} estimate \eqref{eq:ap_gen_ac} that 
\begin{align}\label{eq:u_Iauh}
\|u - I_{\rm a}u_h\|_{\UsH} &\leq \|u - u_h\|_{\UsH} + \|u_h - I_{\rm a}u_h\|_{\UsH} \lesssim N^{-k}(\log N)^s,
\end{align}
where the last inequality holds by the assumption that $u_h$ and $u$ share the same decay estimate.

The above result leads to the lower bound of the true approximation error
\begin{eqnarray}\label{eq:lo}
 \| \mR(I_{\rm a} u_{h}) \|_{(\UsH)^*} \leq L_1 \|u - I_{\rm a} u_{h}\|_{\UsH} =:C \|u - I_{\rm a} u_{h}\|_{\UsH}.
\end{eqnarray}

By considering $\mR(I_{\rm a} u_{h})$ as an operator on $\Use$ and the Galerkin orthogonality introduced by the first optimality condition of the atomistic problem, we obtain the upper bound
\begin{align}
\label{eq:up1}
\| \mR(I_{\rm a} u_{h}) \|_{(\UsH)^*}\|u - I_{\rm a} u_{h}\|_{\UsH} \geq \<\delta\Ea(I_{\a}u_{h}), u - I_{\a}u_{h}\> = \<\delta\Ea(I_{\a}u_{h})-\delta\Ea(u), u - I_{\a}u_{h}\>.
\end{align}
A further application of the intermediate value theorem yields
\begin{align}
\label{eq:up2}
\<\delta\Ea(I_{\a}u_{h})-\delta\Ea(u), u - I_{\a}u_{h}\> = \<\delta^2\Ea(w)(u - I_{\a}u_{h}), u - I_{\a}u_{h}\> ,
\end{align}
where $w=tu+(1-t)I_{\a}u_h$ for some $t\in(0,1)$. To analyze the second variation, we add and subtract the same term and separate it into two parts
\begin{align}\label{eq:sp_stab}
    \<\delta^2\Ea(w)v, v\> &= \<\delta^2\Ea(w)v, v\> - \<\delta^2\Ea(u)v, v\> + \<\delta^2\Ea(u)v, v\> \nonumber \\
    &=: S_1 + S_2. 
\end{align}
For $N$ sufficiently large (e.g. $N\geq (\frac{\gamma}{2}L_2)^k$), we bound $S_1$ by the Lipschitz continuity of $\delta^2\Ea$ that
\begin{eqnarray}\label{eq:stab_s1}
    \big|S_1\big| \leq L_2 \|u - w\|_{\UsH} \|v\|^2_{\UsH} \leq \frac{\gamma}{2} \|v\|^2_{\UsH},
\end{eqnarray}
where the second inequality holds by the fact that $\|u-w\|_{\UsH}\leq \|u-I_{\a}u_h\|_{\UsH}$ and \eqref{eq:u_Iauh}. Since $u$ is a strongly stable solution of the atomistic problem \eqref{eq:variational-problem} satisfying Assumption \ref{as:LS}, we obtain the following stability estimate by \eqref{eq:sp_stab} and \eqref{eq:stab_s1} that
\begin{eqnarray}\label{eq:stab_w}
\<\delta^2\Ea(w)v, v\> \geq \frac{\gamma}{2}\|v\|^2_{\UsH}.
\end{eqnarray}

%

Let $v=u-I_{\rm a}u_h$ in \eqref{eq:stab_w}, we have
\begin{eqnarray}\label{eq:up3}
 \<\delta^2\Ea(w)(u - I_{\a}u_{h}), u - I_{\a}u_{h}\> \geq \frac{\gamma}{2} \|u - I_{\rm a} u_{h}\|^2_{\UsH}.
\end{eqnarray}
Combining \eqref{eq:up1}, \eqref{eq:up2} and \eqref{eq:up3} and dividing both sides by $\|u - I_{\rm a} u_{h}\|_{\UsH}$ lead to the upper bound of the true approximation error
\begin{eqnarray}\label{eq:up}
 \| \mR(I_{\rm a} u_{h}) \|_{(\UsH)^*} \geq \gamma/2 \|u - I_{\rm a} u_{h}\|_{\UsH} =:c \|u - I_{\rm a} u_{h}\|_{\UsH}.
\end{eqnarray}
The stated results are obtained by \eqref{eq:lo} and \eqref{eq:up}.
\end{proof}
Theorem~\ref{thm:res} establishes the equivalence of the residual $\mR(I_{\rm a} u_{h})$ in the dual norm $\| \cdot \|_{(\UsH)^*}$ and the true approximation error $\|u - I_{\rm a} u_{h}\|_{\Use}$. We note that such equivalence is independent of the coupling scheme by which $u_{h}$ is obtained. The conditions on which the equivalence does rely are essentially the following three: the Lipschitz continuity of the derivatives or the variations of $\Ea$, the consistency of the coupling method defined by \eqref{eq:ap_gen_ac} and the assumption that $u_h$ shares the same decay estimates as $u$. The first condition is often guaranteed by the regularity of the (empirical) interatomic potential (cf. \cite[Lemma 2.1]{2016_EV_CO_AS_Boundary_Conditions_for_Crystal_Lattice_ARMA}). The second condition is the focus of the {\it a priori} analysis of any QC method (cf. \cite{2013_ML_CO_AC_Coupling_ACTANUM, 2016_XL_CO_AS_BK_BQC_Anal_2D_NUMMATH}). The third condition is not easy to prove but often proposed as an assumption (cf.\cite{2023_YW_HW_Efficient_Adaptivity_AC_JSC}). One possibility to rigorously derive this condition is to utilize the technique for constructing the effective Green's functions of QC methods which is initially introduced in~\cite{2016_EV_CO_AS_Boundary_Conditions_for_Crystal_Lattice_ARMA}. We also refer the interested reader to the analysis in \cite[Section 5.1]{2021_YW_HC_ML_CO_HW_LZ_A_Post_QMMM_SISC} for further detail. In the current work, we numerically verify this condition in~\ref{sec:appendix:numerics}.

We define the {\it ideal} {\it a posteriori} error estimator to be
\[
\eta^{\rm ideal}(u_{h}) := \| \mR(I_{\rm a} u_{h}) \|_{(\UsH)^*},
\]
which is only a abstract quantity defined in a dual norm. Hence, in the following section we derive a {\it concrete} {\it a posteriori} error estimator which is an upper bound of the {\it ideal} estimator.

%


\subsection{{\it a Posteriori} error estimator based on residual forces}
\label{sec:sub:resF}

Since $\E^{\a}(\cdot)$ is an energy functional on ${\Use}$, the first variation of $\E^{\a}$ naturally defines a stress-strain relation in the language of continuum mechanics. For example, the direct calculation of the first variation of $\E^{\a}$ at $I_{\a} u_{h}$ {\it often} gives
\begin{equation}
	\label{eq:a stress}
	\mR(I_{\a} u_{h})[v] = \< \delta\E^{\a}(I_{\rm a}u_{h}), v \> =  \sum_{T \in \T_{\Lambda}} |T| \sa(I_{\rm a}u_{h}) : \nabla_T v, \quad \forall v\in\UsH,
\end{equation}
where $\T_{\Lambda}$ is the {\it canonical} triangulation induced by the reference lattice $\Lambda$ and $\sa$ can be defined as the atomistic stress tensor which we refer to \cite{2013_CO_FT_Cauchy_Born_ARMA} for the detailed formulation. We note that, by a proper application of the summation by parts or Abel transform in higher dimension, the residual $\mR(I_{\rm a} u_{h})$ can also be expressed in terms of the {\it residual forces} such that 
\begin{equation}
	\label{eq:forcenew}
	\mR(I_{\a} u_{h})[v] = \< \delta\E^{\a}(I_{\rm a}u_{h}), v \> = \sum_{T \in \T_{\Lambda}} |T| \sa(I_{\rm a}u_{h}) : \nabla_T v = \sum_{\ell \in \L} \F^{\rm a}_{\ell}(I_{\rm a}u_{h}) \cdot v(\ell), \quad \forall v\in\UsH
\end{equation}
where the {\it residual force} on each atom $\ell$ is given by $\F^{\rm a}_{\ell}(I_{\rm a}u_{h}) := \partial_{u(\ell)} \E^{\a}(u)\big|_{u = {I_{\rm a}u_{h}}}$. Adopting the residual-force based representation of the residual given in \eqref{eq:forcenew}, the {\it concrete} {\it a posteriori} error estimator $\eta(u_h)$ is given by the following lemma and theorem.

\begin{lemma}
\label{le}
Let $d=2$. Then for any $\ell \in \L$ there exists a constant $C^{\rm d2}>0$ such that
\[
|v(\ell)-v(0)| \leq C^{\rm d2}\log(2+|\ell|)\cdot \|v\|_{\UsH}, \qquad \forall v\in \UsH.
\]
Let $d=3$. Then for each $v\in\UsH$, there exist a constant $C^{\rm d3}>0$ and a $v_{\infty}\in\R^3$ such that the following estimate holds
\[
\|v - v_{\infty}\|_{\ell^6} \leq C^{\rm d3} \|v\|_{\UsH}.
\]
\end{lemma}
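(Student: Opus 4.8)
The plan is to prove the two assertions separately, since they are really standard embedding and Poincar\'e-type statements for the homogeneous Sobolev space $\UsH$ once we identify a lattice function with its piecewise-affine interpolant $I_{\a}v$. In both cases the bridge is the inequality $\|\nabla I_{\a}v\|_{L^2(\R^d)} = \|v\|_{\UsH}$, which is the definition of the norm, together with the equivalence (uniform in the lattice) of the discrete finite-difference gradient $Dv$ and the piecewise-constant gradient $\nabla I_{\a}v$ on the canonical triangulation; this equivalence, and the fact that $\Us^{\rm c}$ is dense in $\UsH$, are already recorded in the excerpt (it cites \cite{2013_ML_CO_AC_Coupling_ACTANUM}). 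So I will work with the continuous function $w := I_{\a}v \in \dot H^1(\R^d)$ and transfer the resulting bounds back to nodal values, using $w(\ell) = v(\ell)$.

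For $d=2$: the estimate $|v(\ell)-v(0)| \le C^{\rm d2}\log(2+|\ell|)\,\|v\|_{\UsH}$ is the well-known logarithmic modulus of continuity of $\dot H^1$ functions in the plane. First I would reduce to $v \in \Us^{\rm c}$ by density (both sides are continuous in $\|\cdot\|_{\UsH}$ for fixed $\ell$, because on a compactly-generated space one can integrate; more carefully, one proves the bound on the dense subspace and then notes the left side is controlled, hence the functional $v\mapsto v(\ell)-v(0)$ extends). The cleanest route: write $w(\ell)-w(0) = \int_0^{|\ell|} \partial_r \bar w(r)\,dr$ after averaging over a suitable family of paths, or more standardly use the representation via the fundamental solution of the Laplacian. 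Concretely, for $w$ smooth with compact support, $w(x) = -\frac{1}{2\pi}\int_{\R^2} \log|x-y|\,\Delta w(y)\,dy$ up to a harmonic part; subtracting the values at $\ell$ and $0$ and using $\big|\log|\ell-y| - \log|y|\big| \le$ something integrable against an $L^2$ function on the relevant region gives the $\log(2+|\ell|)$ factor after Cauchy--Schwarz in the gradient. An alternative and perhaps more transparent argument splits $\R^2$ into dyadic annuli $A_j$ around a point and uses, on each annulus of radius $\sim 2^j$, the scaled Poincar\'e/trace estimate $|\bar w_{A_{j+1}} - \bar w_{A_j}| \lesssim \|\nabla w\|_{L^2(A_j\cup A_{j+1})}$ (scale-invariant in 2D), summing $O(\log|\ell|)$ such terms and invoking Cauchy--Schwarz over the (finitely overlapping) annuli to convert $\sum_j \|\nabla w\|_{L^2(A_j)}$ into $(\log|\ell|)^{1/2}\|\nabla w\|_{L^2}$. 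Wait --- that only gives a $(\log)^{1/2}$; to get the full $\log$ one keeps the estimate as $\sum_j \|\nabla w\|_{L^2(A_j)} \le \log(2+|\ell|)^{1/2} \cdot (\sum_j \|\nabla w\|_{L^2(A_j)}^2)^{1/2}$, which is $\lesssim (\log(2+|\ell|))^{1/2}\|\nabla w\|_{L^2}$; so in fact the sharp constant carries a square root of the log, and the stated $\log(2+|\ell|)$ is simply a (nonsharp but sufficient) upper bound. I will state it with the $\log$ as written, noting it is not optimal. Finally, passing from $w(\ell)-w(0)$ to $v(\ell)-v(0)$ is immediate since $I_{\a}v$ interpolates nodal values exactly.

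For $d=3$: the claim is the homogeneous Sobolev embedding $\dot H^1(\R^3) \hookrightarrow L^6(\R^3)$ (the Gagliardo--Nirenberg--Sobolev inequality $\|w\|_{L^6(\R^3)} \le C\|\nabla w\|_{L^2(\R^3)}$), transferred to the lattice $\ell^6$ norm. Here, because $\dot H^1(\R^3)$ functions need not vanish at infinity in the naive sense but do have a well-defined "value at infinity" $v_\infty\in\R^3$ (the unique constant such that $v - v_\infty \in L^6$; this is exactly how $\dot H^1$ is realized concretely in $d\ge 3$), one applies Sobolev to $w - v_\infty = I_{\a}v - v_\infty$ to get $\|I_{\a}v - v_\infty\|_{L^6(\R^3)} \lesssim \|\nabla I_{\a}v\|_{L^2} = \|v\|_{\UsH}$, and then uses the elementary fact that on a shape-regular (uniformly non-degenerate) simplicial mesh with vertices $\L$, the discrete $\ell^6$ norm of nodal values is comparable to the continuous $L^6$ norm of the $\mathcal{P}_1$ interpolant, $\|v - v_\infty\|_{\ell^6} \lesssim \|I_{\a}v - v_\infty\|_{L^6(\R^3)}$ (each simplex has volume bounded below, and the interpolant on a simplex controls its vertex values in $L^6$ up to the mesh constant). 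Combining gives the stated bound with $C^{\rm d3}$ depending only on the Sobolev constant and the lattice geometry.

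The main obstacle --- really the only non-bookkeeping point --- is the logarithmic factor in $d=2$: one must set up the dyadic-annulus (or Green's-function) argument carefully so that the overlap of annuli is finite and the number of them is genuinely $O(\log|\ell|)$, and one must handle the regime $|\ell|$ small (where $\log(2+|\ell|)$ is bounded below and the estimate is just a local $\dot H^1$-to-pointwise-difference bound, which fails for $\dot H^1$ at a single point in 2D unless one uses the difference of two values --- hence the difference $v(\ell)-v(0)$ rather than $v(\ell)$ itself). I would be explicit that the single-point evaluation is not bounded in 2D and that it is precisely the cancellation in the difference, combined with summing log-many scale-invariant Poincar\'e estimates, that yields the bound; everything else (density reduction, passing between lattice functions and interpolants, the 3D Sobolev embedding) is routine and can be cited or dispatched in a line.
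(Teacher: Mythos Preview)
Your approach is correct. Note that the paper does not actually give a proof of this lemma: immediately after Theorem~\ref{thm:resF} it writes ``The proof of Lemma~\ref{le} can be found in~\cite[Lemma A.2]{2016_HC_CO_QM_MM_P2_MMS}'' and moves on. Your sketch --- pass to the piecewise-affine interpolant $I_{\a}v \in \dot H^1(\R^d)$, invoke the dyadic-annulus telescoping argument for $d=2$ and the Gagliardo--Nirenberg--Sobolev embedding $\dot H^1(\R^3)\hookrightarrow L^6$ for $d=3$, then transfer back to nodal values via the unit-scale mesh --- is precisely how such results are proved in the references the paper cites, so there is no alternative route to compare against. Your observation that the annulus argument in fact yields the sharper factor $(\log(2+|\ell|))^{1/2}$ is correct; the stated $\log$ is simply non-optimal. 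One small point worth making explicit in $d=2$: relating the nodal value $v(\ell)$ to the average of $I_{\a}v$ over the innermost annulus uses that $I_{\a}v$ is piecewise affine on unit-scale simplices (so a vertex value is controlled by the average over an adjacent element plus a local gradient term), since raw point evaluation of a general $H^1(\R^2)$ function is unbounded --- you allude to this, but it deserves one explicit sentence in a written proof.
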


\begin{theorem}\label{thm:resF}
   Let the residual force $\F^{\rm a}_{\ell}(u_{h})$ be defined by~\eqref{eq:forcenew}. Under the conditions of Theorem~\ref{thm:res}, there exists a constant $C^{\rm resF}$ such that 
    \begin{eqnarray}\label{res-F-bound}
    \|u - I_{\rm a} u_{h}\|_{\Use} \leq C 
	\|\mathsf{R}(I_{\rm a} u_{h})\|_{(\Use)^{*}}
	\leq C^{\rm resF} \eta(u_{h}),
	\end{eqnarray}
	where 
	\begin{align}\label{eq:est}
	    \eta(u_h) := \left\{ \begin{array}{ll}
		\big\|\log(2+|\ell|) \cdot \F^{\rm a}_{\ell}(I_{\rm a}u_h)\big\|_{\ell^1}, \quad & d=2
		\\[1ex]
		\|\F^{\rm a}_{\ell}(I_{\rm a}u_h)\|_{\ell^{\frac{6}{5}}}, \quad & d=3 
	\end{array} \right. .  
	\end{align}
\end{theorem}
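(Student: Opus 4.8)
The plan is to split \eqref{res-F-bound} into its two inequalities. The left inequality, $\|u - I_{\rm a} u_h\|_{\UsH} \le C\|\mR(I_{\rm a}u_h)\|_{(\UsH)^*}$, is exactly the upper-bound half of Theorem~\ref{thm:res}, so nothing new is needed there. All the work goes into the right inequality, $\|\mR(I_{\rm a}u_h)\|_{(\UsH)^*} \le C^{\rm resF}\eta(u_h)$, which I would obtain by testing the force representation \eqref{eq:forcenew} against an arbitrary $v$ with $\|v\|_{\UsH}\le 1$, estimating the pairing $\langle\delta\Ea(I_{\rm a}u_h),v\rangle$ by means of Lemma~\ref{le}, and taking a supremum.

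The crucial preliminary observation is that the residual forces sum to zero: $\sum_{\ell\in\L}\F^{\rm a}_\ell(I_{\rm a}u_h)=0$. This follows from the translation invariance of $\Ea$, which depends on $u$ only through the finite differences $Du$, so that $\delta\Ea$ annihilates constant displacement fields. Consequently I may replace $v(\ell)$ by $v(\ell)-\mathbf c$ for any fixed constant $\mathbf c$ inside $\sum_\ell \F^{\rm a}_\ell(I_{\rm a}u_h)\cdot v(\ell)$. Choosing $\mathbf c=v(0)$ when $d=2$ and $\mathbf c=v_\infty$ (the far-field limit furnished by Lemma~\ref{le}) when $d=3$ converts the pairing into one that Lemma~\ref{le} controls: in two dimensions $|\mR[v]|\le \sum_\ell|\F^{\rm a}_\ell|\,C^{\rm d2}\log(2+|\ell|)\|v\|_{\UsH}=C^{\rm d2}\eta(u_h)\|v\|_{\UsH}$, which is an $\ell^1$–weighted-$\ell^\infty$ estimate; in three dimensions the discrete Hölder inequality with conjugate exponents $\tfrac65$ and $6$ gives $|\mR[v]|\le\|\F^{\rm a}_\ell\|_{\ell^{6/5}}\|v-v_\infty\|_{\ell^6}\le C^{\rm d3}\eta(u_h)\|v\|_{\UsH}$. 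Dividing by $\|v\|_{\UsH}$ and taking the supremum yields the claimed bound, with $C^{\rm resF}$ absorbing the constant $C$ from Theorem~\ref{thm:res} and the embedding constant from Lemma~\ref{le}.

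A few points would need care. First, the sum $\sum_\ell \F^{\rm a}_\ell(I_{\rm a}u_h)\cdot v(\ell)$ need not converge absolutely for a general $v\in\UsH$, and the rearrangement $v(\ell)\mapsto v(\ell)-\mathbf c$ is only immediately legitimate for compactly supported $v$; so I would first run the argument on the dense subspace $\Us^{\rm c}\subset\UsH$, where every sum is finite, and then pass to the limit using the density of $\Us^{\rm c}$ and the fact that the right-hand side already gives a bound uniform in $\|v\|_{\UsH}$ (if $\eta(u_h)=\infty$ the estimate is vacuous, so there is no loss). Second, one should confirm that the force representation \eqref{eq:forcenew} itself, obtained from the stress form by summation by parts / Abel summation, is valid for these test functions; this is routine for $v\in\Us^{\rm c}$.

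I expect the main obstacle to be one of careful bookkeeping rather than a genuine difficulty: making the rearrangement and the passage $\Us^{\rm c}\to\UsH$ rigorous, and, in the $d=3$ case, handling the additive constant $v_\infty$ so that the Hölder pairing against $\F^{\rm a}_\ell\in\ell^{6/5}$ is justified. The substantive analytic input — the borderline two-dimensional logarithmic bound and the three-dimensional discrete Sobolev embedding $\|v-v_\infty\|_{\ell^6}\lesssim\|v\|_{\UsH}$ — is already packaged in Lemma~\ref{le}, so once that is available the theorem follows in a few lines.
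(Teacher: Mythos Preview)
Your proposal is correct and matches the approach the paper has in mind: the paper itself omits the proof, stating only that it ``follows the same line'' as \cite[Theorem~3.1]{2019_HC_ML_HW_YW_LZ_Adaptive_QMMM_CMAME}, and the argument there is precisely the one you outline---the left inequality is Theorem~\ref{thm:res}, and the right inequality comes from combining the force representation~\eqref{eq:forcenew}, the cancellation of a constant via translation invariance ($\sum_\ell \F^{\rm a}_\ell = 0$), and the Poincar\'e-type bounds of Lemma~\ref{le}. Your remarks about working first on $\Us^{\rm c}$ and then passing to $\UsH$ by density are the right way to make the rearrangement rigorous.
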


Lemma \ref{le} essentially serves as the Poincar\'{e} inequality for $\UsH$ and Theorem \ref{thm:resF} establishes an {\it concrete} upper bound of $\| \mR(I_{\rm a} u_{h}) \|_{(\UsH)^*}$ using the residual force $\F^{\rm a}_{\ell}(I_{\rm a}u_h)$ on each lattice point $\ell \in \Lambda$. The proof of Lemma~\ref{le} can be found in~\cite[Lemma A.2]{2016_HC_CO_QM_MM_P2_MMS} while the proof of Theorem~\ref{thm:resF} follows the same line as that for {\it a posteriori} error estimate in QM/MM coupling method \cite[Theorem 3.1]{2019_HC_ML_HW_YW_LZ_Adaptive_QMMM_CMAME} and thus are both omitted. 

The {\it a posteriori} analysis up to now is valid for both two and three dimensions and the theoretical framework can be easily extended to three dimensions with suitable modifications, e.g. estimates of the truncation error that will be discussed immediately. However, the adaptive QC in three dimensions requires a reliable three-dimensional mesh generator and adaptive techniques, which are out of the scope of the current work. We therefore concentrate on the two dimensional case from now on and refer the interested readers to \cite{2023_YW_LZ_Adaptive_Multigrid_AC_3D_JCP} for a recent effort in this direction. 

We note that the evaluation of the {\it concrete} {\it a posteriori} error estimator $\eta(u_{h})$ requires the computation of the {\it residual force} at each lattice point of the infinite lattice $\L$ and is thus not computable. Therefore, our next task is to develop a computable approximation of $\eta(u_{h})$.


%

\subsection{Estimates of the truncation error}
\label{sec:sub:prac_est}

In this section, we consider the estimate of the error originated from the truncation of the infinite domain. Recall that $r_{\rm cut}$ is the maximum radius of the interaction range, we can define the extension of the computational domain $\Omega$ as $\Omega_{\rm ext} := \bigcup_{\ell \in \L\cap \Omega} B_{r_{\rm cut}+1}(\ell)$. 
We then define $\rho^{\rm tr}(u_h)$ and the {\it coupling error estimator} $\eta^{\rm qc}(u_h)$ respectively by
 \begin{align}
 \label{eq:tr_est_eta}
\rho^{\rm tr}(u_h):=\sum_{\ell\in\L \cap (\Omega_{\rm ext}\setminus \Omega)} \log(2+|\ell|)\cdot \big|\F^{\rm a}_{\ell}(I_{\rm a}u_h)\big| \text{ and }
\eta^{\ac}(u_h):=\sum_{\ell\in\L\cap\Omega} \log(2+|\ell|)\cdot \big|\F^{\rm a}_{\ell}(I_{\rm a}u_h)\big|.
\end{align}
The following Lemma shows that the {\it concrete} {\it a posteriori} error estimator $\eta(u_h)$ can be bounded by the {\it coupling error estimator} $\eta^{\rm qc}(u_h)$ and the {\it truncation error estimator} $\eta^{\rm tr}(u_h)$ based on $\rho^{\rm tr}(u_h)$, and is defined in the body of the Lemma.
%
%
%

\begin{lemma}\label{le:tr}
Let $\eta(u_h)$ and $\eta^{\rm qc}(u_h)$ be defined by \eqref{eq:est} and \eqref{eq:tr_est_eta}. We have
\[
\eta(u_h) \leq \eta^{\rm qc}(u_h) + \eta^{\rm tr}(u_h),
\]
where the truncation error estimator $\eta^{\rm tr}(u_h)$ is defined by
\begin{align}\label{eq:tr_est}
    \eta^{\rm tr}(u_h) := \left\{ \begin{array}{ll}
		\rho^{\rm tr}(u_h), \quad & {\rm Point~defects}
		\\[1ex]
		\rho^{\rm tr}(u_h) + C^{\rm disloc}\big(3+\log(R_{\rm ext})\big) \cdot R_{\rm ext}^{-1}, \quad & {\rm Dislocations}
	\end{array} \right.,   
\end{align}
where $R_{\rm ext}:=R+r_{\rm cut}+1$, $\rho^{\rm tr}(u_h)$ is defined by \eqref{eq:tr_est_eta} and $C^{\rm disloc}$ is a generic constant.
\end{lemma}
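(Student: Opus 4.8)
The plan is to split $\eta(u_h)$ according to whether the lattice sites lie inside or outside the extended computational domain $\Omega_{\rm ext}$. Starting from the definition \eqref{eq:est} in the two-dimensional case, one writes
\[
\eta(u_h) = \sum_{\ell\in\L} \log(2+|\ell|)\cdot\big|\F^{\rm a}_\ell(I_{\rm a}u_h)\big|
= \sum_{\ell\in\L\cap\Omega_{\rm ext}} \log(2+|\ell|)\cdot\big|\F^{\rm a}_\ell(I_{\rm a}u_h)\big|
+ \sum_{\ell\in\L\setminus\Omega_{\rm ext}} \log(2+|\ell|)\cdot\big|\F^{\rm a}_\ell(I_{\rm a}u_h)\big|.
\]
The first sum splits further as $\eta^{\ac}(u_h) + \rho^{\rm tr}(u_h)$ by the definitions in \eqref{eq:tr_est_eta}, since $\L\cap\Omega_{\rm ext} = (\L\cap\Omega)\sqcup\big(\L\cap(\Omega_{\rm ext}\setminus\Omega)\big)$. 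Thus the whole burden reduces to bounding the tail sum $\sum_{\ell\in\L\setminus\Omega_{\rm ext}} \log(2+|\ell|)\cdot|\F^{\rm a}_\ell(I_{\rm a}u_h)|$ by the extra term appearing in \eqref{eq:tr_est} — zero for point defects and $C^{\rm disloc}(3+\log R_{\rm ext})R_{\rm ext}^{-1}$ for dislocations.

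For point defects the tail sum vanishes exactly: outside $\Omega_{\rm ext}$ every atom $\ell$ has its entire interaction neighborhood contained in $\R^2\setminus\Omega$, where $I_{\rm a}u_h \equiv 0$ by the definition \eqref{eq: ac_solution_space} of $\Us_h$ (so $Du_h = 0$ there), and the far-field predictor $u_0$ together with the homogeneous lattice contributes no residual force because $\delta\E^{\rm a}$ vanishes at the equilibrated far field for a localized (point) defect. Hence $\F^{\rm a}_\ell(I_{\rm a}u_h)=0$ for all $\ell\in\L\setminus\Omega_{\rm ext}$, giving $\eta(u_h)\le\eta^{\ac}(u_h)+\rho^{\rm tr}(u_h)$, which is the claimed bound with $\eta^{\rm tr}(u_h)=\rho^{\rm tr}(u_h)$.

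For dislocations the situation differs only because $u_0$ is the (non-decaying, slowly varying) elastic field of the dislocation predicted by continuum linearized elasticity, so the residual force does not vanish in the far field but decays. The standard estimate (see \ref{sec:appendixU0} and the a priori analysis referenced in Section~\ref{sec:sub:a_priori}) is that the Cauchy-Born modeling residual generated by $u_0$ at distance $|\ell|\sim r$ from the core behaves like $|\F^{\rm a}_\ell|\lesssim r^{-3}$, with $|\nabla u_0|\sim r^{-1}$ and the nonlinearity of $W$ producing the cubic decay of the force residual. Summing $\log(2+|\ell|)\cdot|\F^{\rm a}_\ell|$ over $\ell\in\L\setminus\Omega_{\rm ext}$, i.e. over $|\ell|\gtrsim R_{\rm ext}$, and converting the lattice sum to an integral $\int_{R_{\rm ext}}^\infty \log(2+r)\, r^{-3}\cdot r\,\dr$, one obtains a bound of order $(3+\log R_{\rm ext})\,R_{\rm ext}^{-1}$ up to a generic constant $C^{\rm disloc}$, which is exactly the second branch of \eqref{eq:tr_est}. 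Combining with the interior splitting completes the proof.

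The main obstacle is the dislocation tail estimate: one must invoke the decay rate $|\F^{\rm a}_\ell(I_{\rm a}u_h)|\lesssim |\ell|^{-3}$ for the residual force far from the defect core, which rests on the regularity and decay properties of the far-field predictor $u_0$ and on the consistency error of the Cauchy-Born approximation. This is not elementary — it is precisely the ingredient supplied by the a priori analysis and the construction of $u_0$ — but once granted, the remaining steps are the elementary lattice-sum-to-integral comparison and bookkeeping of the domain decomposition $\L = (\L\cap\Omega)\sqcup(\L\cap(\Omega_{\rm ext}\setminus\Omega))\sqcup(\L\setminus\Omega_{\rm ext})$.
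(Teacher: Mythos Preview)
Your proposal is correct and follows essentially the same route as the paper: the three-way decomposition of $\L$ into $\L\cap\Omega$, $\L\cap(\Omega_{\rm ext}\setminus\Omega)$, and $\L\setminus\Omega_{\rm ext}$, the observation that $I_{\rm a}u_h\equiv 0$ outside $\Omega_{\rm ext}$ so that the tail reduces to $\F^{\rm a}_\ell({\bf 0})$, and the use of the sharp decay $|\F^{\rm a}_\ell({\bf 0})|\lesssim |\ell|^{-3}$ (which the paper cites from \cite[Lemma~5.8]{2016_EV_CO_AS_Boundary_Conditions_for_Crystal_Lattice_ARMA}) together with the integral comparison $\int_{R_{\rm ext}}^\infty \log(2+r)\,r^{-2}\,\dr$ are exactly the paper's argument. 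The only cosmetic difference is that the paper states the point-defect case as ``$u_0=0$ hence $\F^{\rm a}_\ell({\bf 0})=0$'' rather than your equivalent equilibrium phrasing.
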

\begin{proof}
Let $\L_{\rm ext}:=\L\cap\Omega_{\rm ext}$. We have by the definitions of $\eta(u_h)$, $\eta^{\ac}(u_h)$ and $\rho^{\rm tr}(u_h)$ that 
\begin{eqnarray}\label{eq:decomp}
\eta(u_h) = \eta^{\ac}(u_h) + \rho^{\rm tr}(u_h) +  \sum_{\ell\in\L\setminus\L_{\rm ext}}\log(2+|\ell|)\cdot \big|\F^{\rm a}_{\ell}({\bf 0})\big|.
\end{eqnarray}
For the case of point defects, the far-field predictor is $u_0=0$.  Hence, for $\ell \in \L \setminus \L_{\rm est}$, we have $\F^{\rm a}_{\ell}({\bf 0})=0$, which leads to
\[
\eta(u_h) = \eta^{\ac}(u_h) + \rho^{\rm tr}(u_h).
\] 
For general straight dislocations, the far-field predictor is not zero. Therefore the above equation no longer holds. However, according to the {\it sharp} decay estimates of the residual force $|\F^{\rm a}_{\ell}({\bf 0})| \leq C^{\rm disloc} \cdot |\ell|^{-3}$ (cf. \cite[Lemma 5.8]{2016_EV_CO_AS_Boundary_Conditions_for_Crystal_Lattice_ARMA}), we have
\begin{align*}
\sum_{\ell\in\L\setminus\L_{\rm ext}}\log(2+|\ell|)\cdot \big|\F^{\rm a}_{\ell}({\bf 0})\big| &\leq~ C^{\rm disloc} \int^{\infty}_{R_{\rm ext}} \log(2+r) \cdot r^{-2} \dr \\
&\leq~ C^{\rm disloc} \big(3+\log(R_{\rm ext})\big)\cdot R_{\rm ext}^{-1}.
\end{align*}
The stated result for dislocations is then obtained by applying the above estimate in \eqref{eq:decomp}.
\end{proof}

\begin{remark}[cracks]
\label{rmk:trun_crack}
The sharp decay estimate of the {\it residual force} for cracks are yet to be rigorously established. However, it is reasonable to expect that $|\F^{\rm a}_{\ell}({\bf 0})| \leq C^{\rm crack} \cdot |\ell|^{-2.5}$ by exploiting the fact that the decay of the far-field predictor for a crack is $|D u_0(\ell)|\lesssim |\ell|^{-0.5}$ \cite{2019_BM_TM_CO_Anal_Antiplane_Fracture_M3AS, 2021_JB_TH_CO_Def_Exp_arXiv} whereas $|D u_0(\ell)|\lesssim |\ell|^{-1}$ for a straight dislocation. This speculation will be numerically verified in Figure~\ref{figs:decay_crack_ff}. Hence, for anti-plane crack we consider in this work, the truncation error indicator is given by
\[
\eta^{\rm tr}(u_h) := \rho^{\rm tr}(u_h) + C^{\rm crack}\big(3+\log(R_{\rm ext})\big) \cdot R_{\rm ext}^{-0.5}.
\]
\end{remark}

The  {\it truncation error estimator} $\eta^{\rm tr}(u_h)$ serves as a measure of the error resulting from the finite computational domain. The constants $C^{\rm disloc}$ and $C^{\rm crack}$ in the definition of $\eta^{\rm tr}(u_h)$ can be empirically determined by fitting the residual forces (see Figure~\ref{figs:decay_screw_ff} and Figure~\ref{figs:decay_crack_ff} for anti-plane screw dislocation and anti-plane crack). More importantly, $\eta^{\rm tr}(u_h)$ is computable since the sum of the residual forces $\F^{\rm a}_{\ell}(I_{\rm a}u_h)$ is only evaluated in $\L \cap (\Omega_{\rm ext}\setminus \Omega)$ which is in a small circle around the the computational domain $\Omega$, but is estimated by an integral in an infinite domain. 






\subsection{Local error contribution by sampling}
\label{sec: local error contribution}

We then consider the {\it coupling error estimator} $\eta^{\rm qc}(u_h)$ defined in \eqref{eq:tr_est_eta}, which is {\it globally} defined on the computational domain $\Omega$. However, it is necessary to distribute $\eta^{\ac}(u_{h})$ into local contributions for the mesh refinement in the continuum region and the optimal allocations of the atomistic region (and the blending region for the blended type of methods). Recall that $\T_h$ is the finite element partition of the computational domain $\Omega$ defined in Section \ref{sec:sub:gen_ac}, we can rewrite $\eta^{\ac}(u_{h})$ in terms of the element $T\in\T_h$ as 
\begin{align}\label{eq:exact_resF}
    \eta^{\ac}(u_{h}) &= \sum_{\ell\in\L\cap\Omega} \log(2+|\ell|)\cdot \big|\F^{\rm a}_{\ell}(I_{\rm a}u_h)\big| \nonumber \\
    &=~ \sum_{T \in \T_{h}} \sum_{\ell \in T} \log(2+|\ell|) \cdot \big|\F^{\rm a}_{\ell}(I_{\rm a}u_{h})\big| \nonumber \\
    &=: \sum_{T \in \T_h} \eta^{\ac}_T(u_h).
\end{align}
It is important to note that, although the definition of the elementwise error estimator $\eta^{\ac}_T(u_h)$ is conceptually straightforward, the computation can be expensive. In particular, the evaluation of $\eta^{\ac}_T(u_h)$ for each $T\in\T_h$ requires a careful consideration of the geometric relationship between $\Lambda$ and $T$. The estimated computational cost for this process would be $O(N^2)$, where $N$ denotes the total number of degrees of freedom of the QC problem. See our recent work~\cite{2023_YW_HW_Efficient_Adaptivity_AC_JSC} for more detail for such geometric consideration. On the other hand, since $u_h$ is smooth in the region far from the defect core, the residual force $\F^{\rm a}_{\ell}(I_{\rm a}u_h)$ should also vary smoothly so that certain approximation for $\eta^{\ac}(u_{h})$ and $\eta^{\ac}_T(u_h)$ should be accurate enough. 



Motivated by the sampling strategy given in \cite[Section 3.3]{2019_HC_ML_HW_YW_LZ_Adaptive_QMMM_CMAME}, we propose the following {\it computable} approximation of the {\it coupling error estimator} $\eta^{\ac}(u_{h})$ by
\begin{align}\label{eq:approxresF}
    \tilde{\eta}^{\ac}(u_h) &:= \sum_{T \in \T_{h}} \omega(T) \log(2+|\tilde{{\ell}}(T)|) \cdot \big|\F^{\rm a}_{\tilde{{\ell}}(T)}(I_{\rm a}u_{h})\big| =: \sum_{T \in \T_{h}} \tilde{\eta}^{\ac}_{T}(u_{h}),
\end{align}
with the corresponding approximated {\it computable elementwise coupling error estimator} 
\begin{eqnarray}\label{eta_local}
 \tilde{\eta}^{\ac}_{T}(u_{h}) := \omega(T) \log(2+|\tilde{\ell}(T)|)\cdot\big|\F^{\rm a}_{\tilde{\ell}(T)}(I_{\rm a}u_{h})\big|,
\end{eqnarray}
where $\tilde{\ell}(T)$ is the ``repatom" of $T$ and $w(T)$ gives the weight of the element $T\in\T_{h}$. In Section~\ref{sec:numerics}, we choose $\tilde{\ell}(T)$ to be the barycenter of $T$ and $w(T)$ to be the area of $T$. By such sampling strategy, the evaluation of $\tilde{\eta}^{\ac}(u_h)$ requires a significantly lower computational cost as that of $\eta^{\ac}(u_{h})$, which is now only proportional to $N$ and is discussed in detail in Section \ref{sec:numerics}.


The following theorem shows that $\tilde{\eta}^{\ac}(u_h)$ provides an accurate enough approximation for $\eta^{\ac}(u_{h})$, given that the residual force is sufficiently smooth and the atomistic region is sufficiently large. The proof is based on an interpolation error analysis and is given in ~\ref{sec:appendix:proof}.

\begin{theorem}\label{thm:interp}
    Let $\tilde{\eta}^{\rm qc}(u_h)$ and $\eta^{\rm qc}(u_{h})$ be defined by \eqref{eq:exact_resF} and \eqref{eq:approxresF}, respectively. Denote the radius of $\Omega$ by $R_{\Omega}$ and let $\widetilde{\F}^{\rm a}$ be a $C^2$-interpolation of $\F^{\rm a}$ in $\Omega$. For sufficiently large $R_{\rm a}$, we have
    \begin{equation}
        \big|\tilde{\eta}^{\rm qc}(u_h) - \eta^{\rm qc}(u_h)\big| \lesssim \log(R_{\Omega}) \cdot \|\nabla^2 \widetilde{\F}^{\rm a}(I_{\rm a}u_{h})\|_{L^2(\Omega)}.
    \end{equation}
\end{theorem}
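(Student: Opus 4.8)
The plan is to regard $\eta^{\rm qc}(u_h)$ and $\tilde{\eta}^{\rm qc}(u_h)$ as two different quadrature rules applied to one and the same integral, and to bound their difference by a classical interpolation/quadrature error analysis; the weight $\log(2+|\ell|)$ then contributes the $\log(R_\Omega)$ prefactor. To set this up, I would write $g(x):=\log(2+|x|)\,\big|\widetilde{\F}^{\rm a}(I_{\rm a}u_h)(x)\big|$, which by hypothesis is $C^2$ on $\Omega$ --- the loss of smoothness at the origin and at zeros of $\widetilde{\F}^{\rm a}$ occurs only inside the fully resolved atomistic region, where the mesh size is $O(1)$, so it contributes only lower-order terms (alternatively one argues componentwise). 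With the normalisation identifying the weight $\omega(T)$ with the lattice count $\#\{\ell\in\Lambda\cap T\}$, one has $\eta^{\rm qc}(u_h)=\sum_{T\in\T_h}\sum_{\ell\in\Lambda\cap T}g(\ell)$ and $\tilde{\eta}^{\rm qc}(u_h)=\sum_{T\in\T_h}\omega(T)\,g(\tilde{\ell}(T))$. For each $T$, both $\sum_{\ell\in\Lambda\cap T}g(\ell)$ (a lattice Riemann sum) and $\omega(T)g(\tilde{\ell}(T))$ (the one-point centroid rule) approximate $\int_T g\,dx$, so by the triangle inequality
\[
\big|\tilde{\eta}^{\rm qc}(u_h)-\eta^{\rm qc}(u_h)\big|\;\le\;\sum_{T\in\T_h}\Big(\big|\omega(T)g(\tilde{\ell}(T))-{\textstyle\int_T g\,dx}\big|\;+\;\big|{\textstyle\int_T g\,dx}-{\textstyle\sum_{\ell\in\Lambda\cap T}g(\ell)}\big|\Big).
\]

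Next I would estimate each summand. Both of the linear functionals above annihilate affine functions on $T$: the centroid rule exactly, and the lattice Riemann sum up to a boundary-layer contribution along $\partial T$ that is of lower order for the large elements of $\T_h$. A Bramble--Hilbert argument on the reference element, combined with the affine scaling $\widehat T\to T$ and shape-regularity of $\T_h$, then produces an elementwise bound of the usual second-order form $\lesssim h_T^{\,p}\,\|\nabla^2 g\|_{L^2(T)}$. Summing over $T\in\T_h$, applying the Cauchy--Schwarz inequality, and using that the graded meshes built by the adaptive algorithm keep $h_T^{2}\,|\nabla^2 g|$ integrable over $\Omega$ (so that no spurious mesh-size factor survives), one arrives at $\big|\tilde{\eta}^{\rm qc}(u_h)-\eta^{\rm qc}(u_h)\big|\lesssim\|\nabla^2 g\|_{L^2(\Omega)}$ up to a shape-regularity constant.

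Finally I would transfer the bound from $g$ to $\widetilde{\F}^{\rm a}$ and extract the logarithm. Writing $\nabla^2 g=\log(2+|x|)\,\nabla^2|\widetilde{\F}^{\rm a}|+2\,\nabla\log(2+|x|)\otimes\nabla|\widetilde{\F}^{\rm a}|+\nabla^2\log(2+|x|)\,|\widetilde{\F}^{\rm a}|$, the first term is controlled by $\log(2+|x|)\le\log(2+R_\Omega)$ on $\Omega$, while the weights $|\nabla\log(2+|x|)|\lesssim(1+|x|)^{-1}$ and $|\nabla^2\log(2+|x|)|\lesssim(1+|x|)^{-2}$ render the last two terms dominated (using the far-field decay of $\widetilde{\F}^{\rm a}$ and its gradient). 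This gives $\|\nabla^2 g\|_{L^2(\Omega)}\lesssim\log(R_\Omega)\,\|\nabla^2\widetilde{\F}^{\rm a}(I_{\rm a}u_h)\|_{L^2(\Omega)}$, which together with the previous step yields the asserted estimate.

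The main obstacle I expect is the second step, and within it the lattice-Riemann-sum term $\big|\int_T g\,dx-\sum_{\ell\in\Lambda\cap T}g(\ell)\big|$: one must show it is genuinely second order in $h_T$ against $\|\nabla^2 g\|$, which demands careful accounting of the lattice sites straddling $\partial T$ and of the discrepancy between $\#\{\ell\in\Lambda\cap T\}$ and $\omega(T)$, and then the summation over the many small elements near the atomistic region must be arranged so that the final bound carries no mesh-size factor. This is precisely where the hypotheses "$R_{\rm a}$ sufficiently large" and "$\F^{\rm a}(I_{\rm a}u_h)$ sufficiently smooth" enter.
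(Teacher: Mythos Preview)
Your proposal is correct and follows essentially the same route as the paper: both treat $\tilde{\eta}^{\rm qc}-\eta^{\rm qc}$ elementwise as a quadrature/interpolation error and bound it by a second-order estimate against $\nabla^2$ of the weighted residual force, extracting the $\log(R_\Omega)$ factor from the logarithmic weight. The paper's actual proof is in fact only a two-line sketch that asserts the elementwise bound $\sum_{T}(2+|\tilde\ell(T)|)^{-2}\|\nabla^2\widetilde{\mathscr F}^{\rm a}\|_{L^2(T)}$ by ``standard interpolation error estimate'' and then sums, so your version---inserting the intermediate integral $\int_T g\,dx$, invoking Bramble--Hilbert for both quadrature rules, and expanding $\nabla^2 g$ via the product rule---supplies considerably more detail than the paper itself does, and the obstacle you flag with the lattice Riemann sum is precisely what the paper hides behind ``for sufficiently large $R_{\rm a}$''.
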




\subsection{Discussion}

\label{sec: discussion}

Having the {\it computable truncation error estimator} $\eta^{\rm tr}(u_h)$ and the {\it computable coupling error estimator} $\tilde{\eta}^{\ac}(u_h)$ in hand, we are at the stage of developing adaptive algorithms. However, before we proceed, a thorough discussion on the idea behind which we derive the {\it concrete} error estimator $\eta(u_{h})$ in Section \ref{sec:sub:resF} is necessary. The idea is essentially the key for our unified {\it a posteriori} estimate that is independent of the individual QC method we consider, 





We first compare our approach with the classical residual based {\it a posteriori} error estimate in the finite element method. The usual procedure in FEM is to insert the numerical solution into the variational formulation of the elliptic equation and apply an elementwise divergence theorem to obtain elementwise residual forces together with gradient jumps on the edges of the elements (cf. \cite[Chapter I, Equation (1.14)]{1996_RV_A_Post_Adapt} and \cite[Chapter III, Equation (8.16)]{2007_DB_FEM}). In contrast, \eqref{eq:forcenew} holds by a global, rather than a local or elementwise, {\it 2D/3D summation by parts}, which is an analogy of a global divergence theorem. We adopt such approach because the variational formulation of the atomistic problem \eqref{eq:a stress} is essentially discrete and a local summation by parts may not give proper elementwise residual since $\T_{\Lambda}$ and $\T_{h}$ have no hierarchical structure, i.e., $\T_{\Lambda}$ does not corresponds to a refinement of $\T_{h}$. We refer to \cite[Section 3.3]{2023_YW_HW_Efficient_Adaptivity_AC_JSC} and in particular Figure 1 therein for our recent effort on the explanation of the challenges brought by the lack of hierarchy between $\T_{h}$ and $\T_{\Lambda}$. 

We then discuss the difference between the current estimate and our previous analysis, for example in \cite{2014_CO_HW_A_Post_ACC_IMANUM, 2018_HW_ML_PL_LZ_A_Post_GRAC_2D_SISC, 2019_HW_SL_FY_A_Post_QCF_1D_NMTMA, 2023_YW_HW_Efficient_Adaptivity_AC_JSC}. The common practice of the very first step in the previous works is to introduce the corresponding variational or stress-strain formulation of the QC method analyzed. Take the energy-based QC method in \eqref{eq:general_E_ac} as an example. By adding the first variation of $\E^{\rm qc}_{h}(u_{h})$, we obtain the following representation of the residual
\begin{equation}
	\label{eq:a-ac stress}
	\mR(I_{\a} u_{h})[v] =  \sum_{T \in \T_{\Lambda}} |T| \sa(I_{\rm a}u_{h}) : \nabla_T v - \sum_{T\in\T_h}|T|\sac(u_h;T) : \nabla_T v_h,
\end{equation}
where $\sac$ is the coupling stress tensor and $v_h$ is certain interpolation of $v$ in $\Us_{h}$. Such approach, which we refer to as the {\it residual-stress based} {\it a posteriori} error estimates for QC methods,  essentially stems from the error analysis for nonconforming finite element methods and in particular the First Lemma of Strang \cite[Lemma, $\S$1, Chapter III]{2007_DB_FEM}. By properly introducing additional terms, we are able to clearly separate the error into different parts. This procedure is crucial in the {\it a priori} analysis of the QC methods to obtain the convergence \cite{2013_ML_CO_AC_Coupling_ACTANUM, 2011_CO_HW_QC_A_Priori_1D_M3AS, 2011_CO_1D_QNL_MATHCOMP} . However, such approach is also responsible for the two critical limitations described in the introduction which are the dependence of the estimator on the QC method and the complicated formulation of the estimator. In contrast, our {\it residual-force based estimate} $\eta(u_{h})$ is unified that only depends on the atomistic model.

We finally comment on the efficiency. We note that $\eta(u_{h})$ only provides an upper bound for the {\it ideal} estimator $\|\mathsf{R}(I_{\rm a} u_{h})\|_{(\Use)^{*}}$ and thus for the true approximation error $\|u - I_{\rm a} u_{h}\|_{\Use}$. It is ideal to prove that the estimator, up to some constant, also provides a lower bound, which are called the efficiency of the estimator. We refer to \cite{2018_HW_SY_Efficiency_A_Post_1D_MMS} in this direction for the residual-stress based  {\it a posteriori} error estimator and a more recent progress in \cite{2021_YW_HC_ML_CO_HW_LZ_A_Post_QMMM_SISC} on the Riesz representation of the residual force $\F^{\rm a}_{\ell}(u_h)$ in the context of QM/MM methods which, in principle, could be extended to the current work but with substantial additional effort. However, we illustrate in Section \ref{sec:numerics} that the efficiency of the estimator has little impact on our adaptive simulations.


\section{Adaptive Algorithms for Various Quasicontinuum Methods}
\label{sec:alg}

\def\Ta{\T_\a}
\def\Th{\T_{\rm h}}
\def\sh{\sigma^{\rm h}}
\def\sjump{\llbracket\s\rrbracket}

\newcommand{\fig}[1]{Figure \ref{#1}}
\newcommand{\tab}[1]{Table \ref{#1}}



In this section, we develop the adaptive algorithms for the {\it a posteriori} error control problems for various QC coupling schemes based on our {\it unified residual-force based a posteriori} error estimator.  We concentrate on two major classes of QC methods most often encountered in the literature of QC, which are the sharp interfaced QC methods and the blended QC methods \cite{2020_EG_Roadmap_Multiscale_Modeling_MSMSE, 2013_ML_CO_AC_Coupling_ACTANUM, 2009_Miller_Tadmor_Unified_Framework_Benchmark_MSMSE}. Though the QC methods can be further categorized as energy based and force based from the modeling perspective, the classification for the methods into sharp interfaced type and blended type is suitable in the context of the {\it a posteriori} error control as the methods in each class share the same adaptive algorithm which we will develop subsequently. For each class of methods, we will first present the detailed formulations of typical coupling schemes and then develop the corresponding adaptive algorithms. Possible applications of our {\it unified residual-force based a posteriori} error estimate to other classes of multiscale coupling methods are also briefly commented at the end of this section.



\subsection{The adaptive algorithm for the sharp interfaced QC methods}
\label{sec:sub:adapresF}
We first consider the consistent QC methods with sharp interface. The most important feature of this class of methods is to have a very narrow transition from the atomistic region to the continuum region (often a few layers of atoms depending on the interaction range). This is achieved either by introducing an modified interface potential (e.g. GRAC method \cite{2012_CO_LZ_GRAC_Construction_SIAMNUM, 2014_CO_LZ_GRAC_Coeff_Optim_CMAME} and GRC method \cite{2006_WE_JL_ZY_GRC_PRB, 2017_HL_PM_GSC_Finite_Range_CiCP}) or using a sharp characteristic function to couple force balance equations (e.g. QCF method \cite{2008_MD_ML_Ana_Force_Based_QC_M2NA,2014_JL_PM_Convergence_QCF_2D_Planar_Inter_No_Defects_SIAMNUM}). We also note that most of the existing literatures on the adaptive QC focus on this class of methods \cite{2014_CO_HW_A_Post_ACC_IMANUM, 2018_HW_ML_PL_LZ_A_Post_GRAC_2D_SISC, 2019_HW_SL_FY_A_Post_QCF_1D_NMTMA} and thus it provide us a good benchmark problem to illustrate the advantage of our unified {\it a posteriori} error estimates.



%


%


We now present the formulations of two consistent QC methods with sharp interface.


The first method is the geometric reconstruction based consistent QC (GRAC) method which is an energy based method initially proposed in \cite{2006_WE_JL_ZY_GRC_PRB, 2012_CO_LZ_GRAC_Construction_SIAMNUM} and further developed in \cite{2014_CO_LZ_GRAC_Coeff_Optim_CMAME}. The GRAC method match exactly the formulation as shown in \eqref{eq:general_E_ac}, that is,
\begin{align}
  \label{eq:grac_energy}
  \E^{\rm grac}_{h}(u_{h}) :=& \sum_{\ell \in \L_{\a}} V'_\ell(Du_{h}) + \sum_{\ell \in \L_{\rm i}} V^{\rm i}_\ell(Du_{h}) + \sum_{T \in \T_{c}} \omega_T \big(W'(\nabla u_{h}|_T+\nabla u_0|_T) - W'(\nabla u_{0}|_T)\big), 
\end{align}
where $V_\ell^\i$ is a modified interface site potential and $\omega_T$ is the effective volumes of elements (see \cite[Section 2.2]{2012_CO_LZ_GRAC_Construction_SIAMNUM} for a detailed discussion). The modified interface site potential $V_\ell^\i$ is determined by the geometric reconstruction parameters $C_{\ell;\rho,\vsig}$ so that for each $\ell \in\L^\i, \rho, \vsig \in \Rg_\ell$,  $V_\ell^\i(Du_h) := V \Big( \big( {\textstyle \sum_{\vsig \in
      \Rg_\ell} C_{\ell;\rho,\vsig} D_\vsig u_h(\ell) } \big)_{\rho \in
    \Rg_\ell} \Big)$ and the energy and force patch test consistency conditions are satisfied as follows \cite{2012_CO_Patch_Test_2D_M2NA}:
\begin{equation}
  \label{eq:energy_pt}
  V_\ell^\i({\bf 0}) = V({\bf 0}), \quad \forall\ell \in \L_\i \quad \text{and} \quad 
  \< \del \E^{\rm grac}_h({\bf 0}), v \> = 0 \quad \forall v \in \Us_h.
\end{equation}
The QC problem of the GRAC method is to solve the energy minimization problem
\begin{equation}
  \label{eq:min_ac}
  u^{\rm grac}_h \in \arg\min \big\{ \E^{\rm grac}_{h}(u_{h}),~u_h\in
  \Us_{h} \big\}.
\end{equation}



%


The second method is the QCF method which is a force based QC method with sharp interface. We first define the pure Cauchy-Born finite element functional \cite{2016_XL_CO_AS_BK_BQC_Anal_2D_NUMMATH} for $u_h \in \Us_h$ by, 
\begin{eqnarray}
\label{eq:cb_energy}
\E^{\rm cb}_h(u_h) := \int_{\Omega_h} Q_h \big[W'(\nabla u_h + \D u_0) - W'(\nabla u_0) \big] \dx,
\end{eqnarray}
where the piecewise constant mid-point interpolant $Q_{h}v \in \mathcal{P}_0(\T_h)$ for a function $v: \Omega_h \rightarrow \R$ is defined such that $Q_h v(x):=v(x_T)$ for $x\in T\in \T_h$, where $x_T:=\frac{1}{|T|}\int_T x\dx$. The variational formulation of the QCF method is then given by
\begin{eqnarray}\label{eq:qcf}
\<\mathcal{F}_h^{\rm qcf}(u_h), v_h\>:= \<\delta \E^{\rm a}(u_h), \chi_{\L_{\a}}v_h\> + \<\delta\E^{\rm cb}_h(u_h), (1-\chi_{\L_{\a}}) v_h\>, 
\end{eqnarray}
where $\chi_{\L_{\a}}$ is the characteristic function of the atomistic region $\L_{\a}$ and $\E^{\rm a}$ is the atomistic energy defined in \eqref{energy-difference}. The QC problem of the QCF method is to solve the variational nonlinear system
\begin{eqnarray}\label{eq:variational-problem-QCF}
\<\F_h^{\rm qcf}(u^{\rm qcf}_{h}), v_h\> = 0, \quad \forall v_h \in \Us_h.
\end{eqnarray}


The consistency of the QC methods with sharp interface, in particular the GRAC and the QCF methods just presented have been studied in \cite{2013_ML_CO_AC_Coupling_ACTANUM, 2012_CO_LZ_GRAC_Construction_SIAMNUM}. We summarize the consistency of the two methods (together with blended type of methods for later discussion) in Table \ref{tab: decay rate}. We note that the rigorous {\it a priori} error estimates of the QC methods with sharp interface for dislocations and cracks are still lacking. The rates shown below are in fact speculated based on the methodologies employed in \cite{2016_EV_CO_AS_Boundary_Conditions_for_Crystal_Lattice_ARMA}, which are beyond the scope of this work but deserve a future study.
 \begin{table}[htbp]
 \centering
  \begin{tabular}{|c|c|c|c|c|c|}
 \hline
 \diagbox{Defect types}{$k$}{QC coupling methods} & GRAC & QCF & BQCE & BQCF & BGFC \\ \hline
 Point defects ($s=0$)  & 1.0 & 1.0 & 0.5 & 1.0 & 1.0 \\ 
 Anti-plane dislocation ($s=0.5$)  & 1.0 & 1.0 & 1.0 & 1.0 & 1.0 \\ 
 Anti-plane crack ($s=0.5$) & 0.25 & 0.25 & 0.25 & 0.25 & 0.25 \\ 
 \hline
 \end{tabular}
 \caption{Convergence rates of various QC coupling methods for three typical defects considered in this work.}
 \label{tab: decay rate}
 \end{table}



The adaptive algorithm for the QC methods with sharp interface is developed in Algorithm~\ref{alg:size}. The algorithm essentially follows the one previously developed, for example, in~\cite{2018_HW_ML_PL_LZ_A_Post_GRAC_2D_SISC, 2023_YW_HW_Efficient_Adaptivity_AC_JSC} with only the replacement of the error estimators by $\tilde{\eta}^{\ac}$ and $\tilde{\eta}^{\ac}_{T}$ defined in \eqref{eq:approxresF} and \eqref{eta_local}. However, we note that $\tilde{\eta}^{\ac}$ and $\tilde{\eta}^{\ac}_{T}$ are independent of the coupling scheme and the adaptive algorithm is now truly {\it uniform} for any QC methods with sharp interface. We thus prevent the the complexity of deriving and implementing the (often involved) {\it a posteriori} error estimator for the individual method as what have been done previously in \cite{2018_HW_ML_PL_LZ_A_Post_GRAC_2D_SISC, 2020_ML_PL_LZ_Finite_Range_A_Post_2D_CICP, 2019_HW_SL_FY_A_Post_QCF_1D_NMTMA}. Further discussion and comparison of the algorithm, including the convergence and the efficiency of the implementation are given in Section~\ref{sec:numerics}.

\begin{algorithm}[!ht]
\caption{Adaptive algorithm for sharp interfaced QC methods}
\label{alg:size}
\begin{enumerate}
	\item[Step 0] \textit{Prescribe:} Set $\Omega$, $\T_h$, $N_{\max}$, $\rho_{\rm tol}$, $\tau_1$, $\tau_2$, $K$ and $R_{\max}$.
	
	\item[Step 1] \textit{Solve:} Solve the GRAC solution $u_{h}$ of \eqref{eq:min_ac} on the current mesh $\T_{h}$.  
	\item[Step 2] \textit{Estimate:} Compute the local error estimator $\tilde{\eta}^{\ac}_{T}$ by \eqref{eta_local} for each $T\in\T_h$, and the truncation indicator $\rho^{\rm tr}$ by \eqref{eq:tr_est}. Compute the degrees of freedom $N$ and $\tilde{\eta}^{\ac}:=\sum_{T} \tilde{\eta}^{\ac}_{T}$. If $\rho^{\rm tr} > \tau_1 \tilde{\eta}^{\ac}$, enlarge the computational domain. Stop if $N>N_{\max}$ or $\tilde{\eta}^{\ac} < \eta_{\rm tol}$. 
	
	\item[Step 3] \textit{Mark:} 
	\begin{enumerate}
	\item[Step 3.1]:  Choose a minimal subset $\M\subset \T_h$ such that
	\begin{displaymath}
		\sum_{T\in\M}\tilde{\eta}^{\ac}_{T}\geq\frac{1}{2} \tilde{\eta}^{\ac}.
	\end{displaymath}	 
	\item[Step 3.2]: Find the interface elements which are within $k$ layers of atomistic distance, $\M^k_\i:=\{T\in\M\bigcap \T_h^\c: {\rm{dist}}(T, \Li)\leq k \}$. Find the first $k\leq K$ such that 
	\begin{equation}
		\sum_{T\in \M^k_\i}\tilde{\eta}^{\ac}_{T}\geq \tau_2\sum_{T\in\M}\tilde{\eta}^{\ac}_{T}.
		\label{eq:interface2}
	\end{equation}
	Let $\M = \M\setminus \M^{k}_\i$. Expand the interface $\L^\i$ outward by $k$ layers. 
	\end{enumerate}
	
	\item[Step 4] \textit{Refine:} Bisect all elements $T\in \M$. Go to Step 1.
\end{enumerate}
\end{algorithm}

\subsection{The adaptive algorithm for the blended QC methods}
\label{sec:sub:alg}
We then consider the blended QC methods and the corresponding adaptive algorithm. The primary advantage of the blended methods is the simplicity of modeling and implementation, which make these methods well-received for the simulations of complex defects in real-world applications. 

All the blended QC methods depend on a so-called blending function $\beta \in C^{2,1}(\R^d)$ satisfying $\beta=0$ in $\Omega_{\a}$, $\beta=1$ in $\Omega_{\c}$ and ${\rm supp}(\nabla \beta) \subset \Omega_{\b}$ which characterizes the transition of the atomistic model in the defect core to the continuum model in the far field. The choice of the blending function $\beta$ significantly influceds the accuracy of the corresponding blended QC coupling methods and is detailed analyzed in \cite{2013_ML_CO_AC_Coupling_ACTANUM, 2013_ML_CO_BK_BQCE_CMAME, 2016_XL_CO_AS_BK_BQC_Anal_2D_NUMMATH}. In the current work we employ the same blending function as that in \cite{2016_CO_LZ_GForce_Removal_SISC, 2013_ML_CO_BK_BQCE_CMAME}, which is obtained in a preprocessing step by approximately minimizing $\|\nabla^2 \beta\|_{L^2}$. 

We introduce three typical blended QC methods. The first one is the BQCE method which is an energy-based method \cite{2013_ML_CO_BK_BQCE_CMAME, 2016_XL_CO_AS_BK_BQC_Anal_2D_NUMMATH}. For $u_{h} \in \Us_{h}$, the BQCE energy functional is obtained by combining the atomistic and continuum energy functionals via a blending function $\beta$, that is,
\begin{align}\label{eq:Ebqce}
  \E_h^{\rm bqce}(u_{h}) := \sum_{\ell \in \L \cap \Omega_{\rm a}}V'_{\ell}(D u_h) &+ \sum_{\ell \in \L \cap \Omega_{\rm b}}\big(1-\beta(\ell)\big)V'_{\ell}(D u_h) + \sum_{T \in \T_{\rm b}} \beta_T \big(W'(\nabla u_{h}|_T+\nabla u_0|_T) - W'(\nabla u_{0}|_T)\big) \nonumber \\
  &+ \int_{\Omega_{\c}} Q_{h} \big(  W'(\D u_h + \D u_0) - W'(\D u_0) \big) \dx
\end{align}
where $V'_{\ell}$ and $W'$ are defined by \eqref{energy-difference} and \eqref{eq:W} respectively, $Q_{h}$ is the same piecewise constant mid-point interpolation operator as that in \eqref{eq:cb_energy}, $\beta_{T}:=\beta(x_{T})$ with $x_T$ being the barycenter of $T$ and $\T_{\rm b}$ is the {\it canonical} triangulation induced by $\L \cap \Omega_{\rm b}$. The BQCE problem is then to find
\begin{equation}\label{eq:variational-problem-BQCE}
u^{\rm bqce}_{h} \in \arg\min \big\{ \E^{\rm bqce}_{h}(u_{h}),~u_{h} \in \Us_{h} \big\}.
\end{equation}


While the BQCE method \eqref{eq:Ebqce} blends the atomistic and the continuum energies, the BQCF method \cite{2014_HL_ML_CO_AS_BQCF_Compt_CMAME, 2016_XL_CO_AS_BK_BQC_Anal_2D_NUMMATH} blends the atomistic and the continuum force balance equations and is thus a force-based method. Recall the definition of the blending function $\beta \in C^{2,1}(\R^2)$, the atomistic energy $\E^{\rm a}$ and the pure Cauchy-Born energy $\E^{\rm cb}_h$ given in \eqref{energy-difference} and \eqref{eq:cb_energy} respectively.  The BQCF operator is  the nonlinear map $\F^{\rm bqcf}_h:\Us_h \rightarrow (\Us_h)^{*}$, defined by
\begin{eqnarray}
\<\F^{\rm bqcf}_h(u_h), v_h\>:= \<\delta \E^{\rm a}(u_h), (1-\beta)v_h\> + \<\delta\E^{\rm cb}_h(u_h), \beta v_h\>. 
\end{eqnarray}
In the BQCF method we solve the following variational nonlinear system
\begin{eqnarray}\label{eq:variational-problem-BQCF}
\<\F_h^{\rm bqcf}(u^{\rm bqcf}_{h}), v_h\> = 0, \quad \forall v_h \in \Us_h.
\end{eqnarray}



The third method is the BGFC method which combines the benefits of blending \cite{2016_XL_CO_AS_BK_BQC_Anal_2D_NUMMATH} and ghost force correction \cite{1999_VS_RM_ETadmor_MOrtiz_AFEM_QC_JMPS}.
The BGFC method is first proposed and analyzed in \cite{2016_CO_LZ_GForce_Removal_SISC} and here we introduce an alternative but simplified formulation. Let $\E^{\rm bqce}_{\rm hom}(\hat{u}_0)$ be the BQCE energy at a suitable ``predictor" $\hat{u}_0$. Then the energy funtional of BGFC method can be written as 
\begin{eqnarray}\label{eq:gfc}
\E^{\rm bgfc}(u_h) = \E^{\rm bqce}(u_h) - \big\<\delta \E^{\rm bqce}_{\rm hom}(\hat{u}_0), u_h\big\>.
\end{eqnarray}
Motivated by \cite[Section 2.7 and Section 4.2]{2016_CO_LZ_GForce_Removal_SISC}, we use \eqref{eq:gfc} with $\hat{u}_0=0$ for all types of crystalline defects in our implementation.

The BGFC problem we would like to solve is to find
\begin{equation}\label{eq:variational-problem-BGFC}
u^{\rm bgfc}_{h} \in \arg\min \big\{ \E^{\rm bgfc}_{h}(u_{h}), ~u_{h} \in \Us_{h} \big\}.
\end{equation}


The consistency conditions of the three methods just introduced are listed in Table \ref{tab: decay rate}. The consistency conditions for point defects and for anti-plane dislocation essentially follow  from \cite{2013_ML_CO_AC_Coupling_ACTANUM} and \cite{2016_CO_LZ_GForce_Removal_SISC, 2016_XL_CO_AS_BK_BQC_Anal_2D_NUMMATH} but are adapted to the setting of the current work, while that for anti-plane crack is in fact a new speculated result. It is worth mentioning that to obtain the consistency, we exploit the {\it a priori} knowledge that $R_{\a}=R_{\b}$, i.e., the radius of the atomistic region is the same as the width of the blending region. This raises the additional task for the adaptive algorithm of the blended QC methods compared with Algorithm \ref{alg:size}, which is the proper allocation of the atomistic and the blending regions on the fly. 

Indeed, the only existing work for blended type of methods \cite{2019_HW_SL_FY_A_Post_QCF_1D_NMTMA}, which considers the {\it a posteriori} error control for a BQCF method in 1D, avoids this additional task by fixing $R_\a$ and $R_{\b}$. It is neither clear how the residual-stress based error estimate there (c.f. \cite[Theorem 3.2]{2019_HW_SL_FY_A_Post_QCF_1D_NMTMA}), may help to circumvent such difficulty. By contrast, the adaptive algorithm for the blended QC methods which we develop in Algorithm \ref{alg:resF} fulfill this additional task. In particular, the {\it marking step} (Step 3) utilizes a modified D\"ofler strategy \cite{1996_WD_Adaptive_Poisson_SIAMNUM} to allocate the two regions based on a ratio $\alpha$ computed according to the distribution of the \textit{a posteriori} error estimator of the atomistic and the blending regions. More precisely, we construct a set $\M^k\subset\M$ to contain the marked elements that are within $k$ layers to $\L_{\rm a}$. This $k$ layers are the total layers of the atomistic and the blending regions to be expanded. The set $\M\setminus\M^k$ then contains the elements to be refined in the continuum region. For $T\in \M^k$, if ${\rm dist}(T, \L_{\a}) \leq {\rm dist}(T, \L_{\c})$, we assign it to constitute a new subset $\M^k_{\a}$. We compute the ratio of the summation of the error estimator for $T\in\M^k_{\a}$ and that for $T\in\M^k$ by \eqref{eq:ratio}. According to this ratio $\alpha$, we extend the atomistic and the blending regions outwards by $[\alpha k]$ and $k-[\alpha k]$ layers respectively. In Section~\ref{sec:numerics} we will observe almost the same relationship on the fly during the adaptive simulations for all types of crystalline defects considered in this work.

\begin{algorithm}
\caption{Adaptive algorithm for blended QC methods}
\label{alg:resF}
\begin{enumerate}
	\item[Step 0] Prescible $\Omega$, $\T_h$, $N_{\max}$, $\eta_{\rm tol}$, $K$, $\tau_1$ and $\tau_2$.
	\item[Step 1] \textit{Solve}: Solve the BQCE (BQCF or BGFC) solution $u_{h}$ of \eqref{eq:variational-problem-BQCE} (\eqref{eq:variational-problem-BQCF} or \eqref{eq:variational-problem-BGFC}) on the current mesh $\T_{h}$.  
	\item[Step 2] \textit{Estimate}: Compute the local error estimator $\tilde{\eta}^{\ac}_{T}$ by \eqref{eta_local} for each $T\in\T_h$, and the truncation indicator $\rho^{\rm tr}$ by \eqref{eq:tr_est}. Compute the degrees of freedom $N$ and $\tilde{\eta}^{\ac}:=\sum_{T} \tilde{\eta}^{\ac}_{T}$. If $\rho^{\rm tr} > \tau_1 \tilde{\eta}^{\ac}$, enlarge the computational domain. Stop if $N>N_{\max}$ or $\tilde{\eta}^{\ac} < \eta_{\rm tol}$. 
	\item[Step 3] \textit{Mark:} Construct the refinement set $\M$, the total number of atomistic layers to be expanded for both atomistic and blending regions $k$ and the ratio $\alpha$ representing the error contribution of atomistic region.
\begin{enumerate}
	\item[Step 3.1]:  Choose a minimal subset $\M\subset \T_{h}$ such that
	\begin{displaymath}
		\sum_{T\in\M}\tilde{\eta}^{\ac}_{T}\geq\frac{1}{2} \tilde{\eta}^{\ac}.
	\end{displaymath}	 
	\item[Step 3.2]: Find the interface elements which are within $k$ layers of atomistic distance, $\M^k:=\{T\in\M\bigcap (\T_{\b} \cup \T_{\c}): {\rm{dist}}(T, \Lambda_{\a})\leq k \}$, where ${\rm dist}(T, \L_{\a}) := \inf\{|\ell-x_T|,\forall\ell\in\L_{\a}\}$ is the distance between $T$ and $\L_{\a}$ and $x_T$ is barycenter of $T$. Find the first $k\leq K$ such that 
	\begin{equation}
		\sum_{T\in \M^k}\tilde{\eta}^{\ac}_{T}\geq \tau_2\sum_{T\in\M}\tilde{\eta}^{\ac}_{T}.
	\end{equation}
	\item[Step 3.3]: Construct the set $\M^k_{\a}:=\{T\in\M^k: {\rm dist}(T, \L_{\a}) \leq {\rm dist}(T, \L_{\c})\}$ (the definition of ${\rm dist}(T, \L_{\c})$ is similar to that of ${\rm dist}(T, \L_{\a})$). Compute the ratio 
	\begin{eqnarray}\label{eq:ratio}
	\alpha:=\frac{\sum_{T\in\M^k_{\a}} \tilde{\eta}^{\ac}_T}{\sum_{T\in \M^k}\tilde{\eta}^{\ac}_{T}}.
	\end{eqnarray}
	Let $\M:=\M\setminus\M^k$.
\end{enumerate}
	
	\item[Step 4] \textit{Refine:} Expand the atomistic region outward by $[\alpha k]$ layers and the blending region outward by $k-[\alpha k]$ layers. Bisect all elements $T\in \M$. Go to Step 1.
\end{enumerate}
\end{algorithm}

\newcommand{\sty}[1]{\boldsymbol{#1}}
\newcommand{\bmu}{\sty{u}}
\newcommand{\clE}{\mathcal{E}}
\newcommand{\clL}{\mathcal{L}}
\newcommand{\clR}{\mathcal{R}}
\newcommand{\bmf}{\sty{f}}
\newcommand{\bmK}{\sty{K}}
\newcommand{\bmxi}{\sty{\xi}}
\newcommand{\bmzeta}{\sty{\zeta}}
\newcommand{\bmeta}{\sty{\eta}}

\subsection{Discussion and possible extension to other QC coupling methods}

Apart from the blended and the sharp interfaced coupling methods, there are other QC coupling methods employed in both engineering and mathematical communities. To comment on the versatility of the error estimator proposed in this work, we present two examples as follows. 

The first one is the so-called flexible boundary condition (FBC) method~\cite{2021_MH_Anal_FBC_AC_MMS, 2021_AG_WC_Anal_FBC_MSMSE}, which applies the continuum solutions as the boundary conditions of the atomistic problem in the core region. The FBC method can be formulated as: find $u^{\rm fbc}_h := \{ u^{\rm a}, u^{\rm c} \}$ such that
\begin{align}\label{eq:coupled_problem}
    ({\bf P}^{\rm a}) \; \left\{
    \begin{aligned}
        \; \clL[u^{\rm a}] &= 0 &\qquad& \text{in} \; \L_{\rm a}, \\
        \; u                                &= u^{\rm c}   &      & \text{in} \; \L_{\rm i},
    \end{aligned}
    \right.
    &&
    ({\bf P}^{\rm c}) \; \left\{
    \begin{aligned}
        \; \clL_{\rm cb}[u^{\rm c}] &= 0 &\qquad& \text{in} \; \L_{\rm c}, \\
        \; u                                        &= u^{\rm a}    &      & \text{on} \; \L_{\rm i}.
    \end{aligned}
    \right.,
\end{align}
where $\clL$ and $\clL_{\rm cb}$ are the force operator of the atomistic and Cauchy-Born models, respectively.

The second one is the optimization-based QC methods, which requires the atomistic and the continuum subdomains with an overlap region $\Omega_{\i} := \Omega_{\a} \cap \Omega_{\c}$. This is an alternative ghost-force-free method. The optimization-based QC method is to solve the constrained minimization problem:
find $u^{\rm opt}_h := \{ u^{\rm a}, u^{\rm c} \}$ such that $\|\nabla u^{\a} - \nabla I_{\a} u^{\c}\|_{L^2(\Omega_{\rm i})}$ is minimized
subject to 
\begin{align}\label{eq:opt-ac}
    \left\{ \begin{array}{ll}
		\<\delta\E^{\a}(u^{\a}), v\>=0 \quad & \forall v \in \Us^{\a}_0
		\\[1ex]
		\<\delta\E^{\c}(u^{\c}), v\>=0 \quad & \forall v \in \Us^{\c}_0
	\end{array} \right. \quad \textrm{and} \quad  \int_{\Omega_{\i}} u^{\a} - I_{\a} u^{\c}\dx = 0. 
\end{align}
The objective ensures that the mismatch between $\bar{u}^{\a}$ and $\bar{u}^{\c}$ over $\Omega_{\i}$ is as small as possible. 




Both of these aforementioned methods are consistent under proper constructions \cite{2016_DO_AS_ML_Optimization_Based_AC_M2NA, 2021_MH_Anal_FBC_AC_MMS} and are of substantial significance, yet a noticeable gap exists in the availability of the corresponding adaptive algorithms. In light of the framework shown in Section~\ref{sec:err}, the proposed residual-force based error estimator can in principle be applied to both methods. This primarily attributes to the fact that it is independent from the detailed formulations of the QC methods. Notably, in the FBC method, the residual-force based error estimator can be interpreted as a generalized speed for an interface motion problem. This perspective enables its resolution through the utilization of the well-established fast marching method~\cite{1999_JS_Fast_Marching_SIAMREVIEW}. Conversely, in the case of optimization-based adaptive/corrective methods, the integration of the adaptive algorithm necessitates its incorporation within the interface minimization problem. A thorough exploration of this promising direction will be a focal point of our future work.


\section{Numerical Experiments}
\label{sec:numerics}

In this section, we use the adaptive algorithms just developed in Section \ref{sec:alg} to perform adaptive simulations for three prototypical types of defects: micro-crack, anti-plane screw dislocation, and anti-plane crack. The geometries of the defects are already illustrated in Figure~\ref{fig:geom_defects}. 



We focus on the two dimensional triangular lattice $\Lhom = \mathsf{A}\Z^2$ where
\begin{displaymath}
{\sf A}=\left(
	\begin{array}{cc}
		1 & \cos(\pi/3) \\
		0 & \sin(\pi/3)
	\end{array}	 \right),
\end{displaymath}
which can be considered as the projection of a BCC crystal along the (111) direction upon rotating and possibly dilating \cite{2016_EV_CO_AS_Boundary_Conditions_for_Crystal_Lattice_ARMA}. All through our simulation, we adopt the well-known EAM potential as the site potential $V_{\ell}$ that is defined by
\begin{align}
  V_{\ell}(y) := & \sum_{\ell' \in \Nhd_{\ell}} \phi(|y(\ell)-y(\ell')|) + F\B(
  {\textstyle \sum_{\ell' \in \Nhd_{\ell}} \psi(|y(\ell)-y(\ell')|)} \B),\nonumber\\
    = &\sum_{\rho \in \Rg_{\ell}} \phi\big(|D_\rho y(\ell)|\big) + F\B(
  {\textstyle \sum_{\rho \in \Rg_{\ell}}} \psi\big( |D_\rho y(\ell)|\big) \B),  
    \label{eq:eam_potential}
\end{align}
for a pair potential $\phi$, an electron density function $\psi$ and
an embedding function $F$. We choose
\begin{displaymath}
\phi(r)=\exp(-2a(r-1))-2\exp(-a(r-1)),\quad \psi(r)=\exp(-br),
\end{displaymath}
\begin{displaymath}
F(\tilde{\rho})=C\left[(\tilde{\rho}-\tilde{\rho_{0}})^{2}+
(\tilde{\rho}-\tilde{\rho_{0}})^{4}\right],
\end{displaymath}
with the parameters $a=4, b=3, c=10$ and $\tilde{\rho_{0}}=6\exp(-0.9b)$ specifically, which are the same as the numerical experiments presented in \cite{2018_HW_ML_PL_LZ_A_Post_GRAC_2D_SISC, 2020_ML_PL_LZ_Finite_Range_A_Post_2D_CICP, 2023_YW_HW_Efficient_Adaptivity_AC_JSC}.  The radius of the computational domain $\Omega$ is set to be 300 initially and the adaptive processes start with the initial configuration with $R_{\rm a}=3$. The adaptive parameters in Algorithm~\ref{alg:size} and Algorithm~\ref{alg:resF} are fixed to be $\tau_1=1.0$ and $\tau_2=0.7$.

\subsection{Micro-crack}
\label{sec:sub:mcrack}
The first defect we consider is the micro-crack, which is a prototypical example of point defects which serves as an example of a localized defect with an anisotropic shape. To generate this defect, we remove $k$ atoms from $\Lhom$,
\begin{align*}
\L_{k}^{\rm def}:=\{-(k/2)e_{1}, \ldots, (k/2-1)e_{1})\},     & \qquad{\rm if }\quad k \quad\textrm{is even},\\
\L_{k}^{\rm def}:=\{-(k-1)/2e_{1}, \ldots, (k-1)/2e_{1})\}, & \qquad{\rm if }\quad k \quad\textrm{is odd},
\end{align*}
and $\L = \Lhom\setminus \L_{k}^{\rm def}$. 
We set $k=11$ and apply an isotropic stretch $\mathrm{S}$ and a shear $\gamma_{II}$  by setting
\begin{displaymath}
\mF =\left(
	\begin{array}{cc}
		1 & \gamma_{II} \\
		0            & 1+\mathrm{S}
	\end{array}	 \right)
	\cdot{\sf {F_{0}}}
\end{displaymath}
where ${\sf F_{0}} \propto \mathrm{I}$ is a macroscopic stretch or compression and $\mathrm{S}=\gamma_{II}=0.03$. 

\subsubsection{Adaptive simulations for a micro-crack by sharp interfaced QC methods}
\label{sec:sub:mcrack sharp interface}



We first present the adaptive simulations of a micro-crack by adaptive QC methods with sharp interface. In particular, we consider the a specific GRAC method which is adopted in the rest of this paper, whose geometric reconstruction parameters $C_{\ell;\rho, \zeta}$ are determined by  \cite[Proposition 3.7]{2012_CO_LZ_GRAC_Construction_SIAMNUM}: (a) $C_{\ell, \rho, \vsig} = 0$ for $|(\rho - \vsig)~\textrm{mod}~6| > 1$; (b) $C_{\ell, \rho, \rho-1} = C_{\ell, \rho, \rho+1} = 1 - C_{\ell, \rho, \rho}$; (c) $C_{\ell, \rho, \rho} = C_{\ell+\rho, -\rho, -\rho} = 1$ for $\ell+\rho\in\L^{\i}$; (d) $C_{\ell, \rho, \rho}=1$ for $\ell+\rho \in \L^{\a}$; (e) $C_{\ell, \rho, \rho}=2/3$ for $\ell+\rho\in \L^{\c}$. 

We test the adaptive GRAC method based on three different {\it a posteriori} error estimates which are the {\it original residual-stress based error estimate} derived in \cite{2018_HW_ML_PL_LZ_A_Post_GRAC_2D_SISC}, the {\it modified residual-stress based error estimate} in \cite{2023_YW_HW_Efficient_Adaptivity_AC_JSC} and the {\it residual-force based error estimate} in the current work. The corresponding adaptive algorithms are \cite[Algorithm 3]{2018_HW_ML_PL_LZ_A_Post_GRAC_2D_SISC}, \cite[Algorithm 2]{2023_YW_HW_Efficient_Adaptivity_AC_JSC} and Algorithm \ref{alg:size} in Section \ref{sec:alg}. We restrict ourselves to a system with nearest neighbor interactions only so that a direct comparison among the three error estimates just listed can be given. The increased complexity and potential instability associated with generalizing the method to finite interaction range are discussed after we present the numerical demonstrations. The adaptive simulations by the QCF method are expected to exhibit qualitative similarity to those of the GRAC method and is thus omitted for the sake of simplicity.
%
%
%


\begin{figure}[htb]
\centering
	\subfloat[Convergence]{
		\label{fig:conv_mcrack_comp_grac}
		\includegraphics[height=4.5cm]{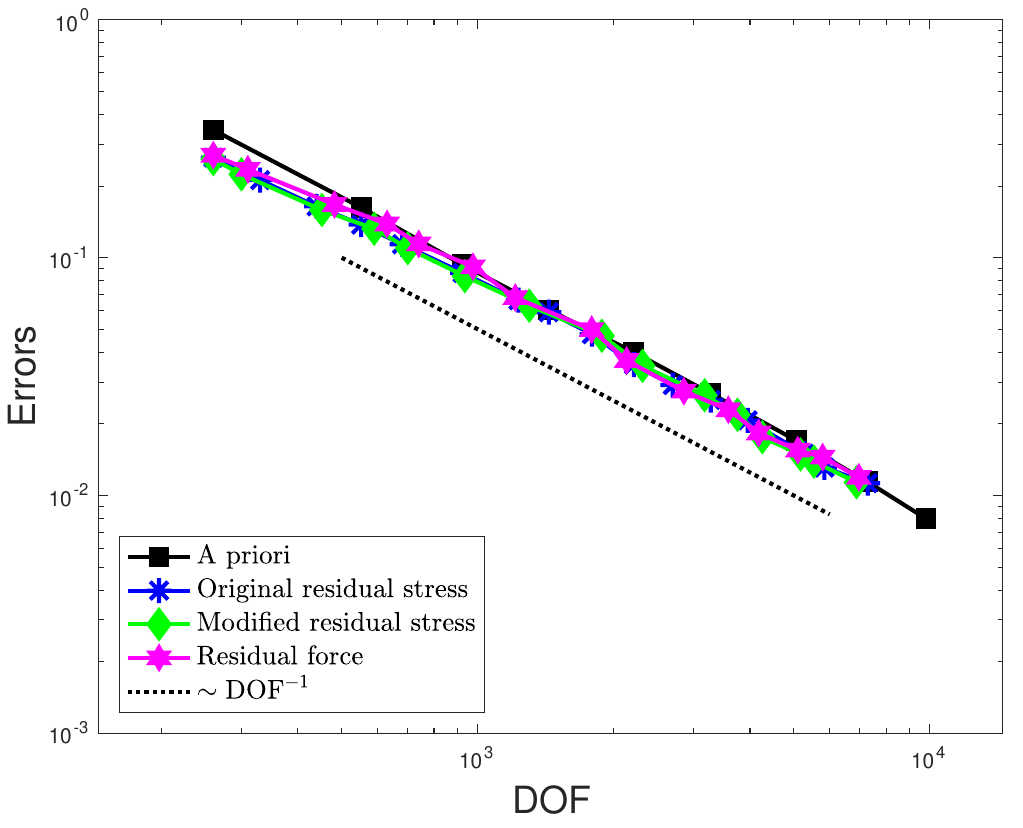}}
	\subfloat[Efficiency factors]{
		\label{fig:conv_mcrack_efffac_comp_grac}
		\includegraphics[height=4.5cm]{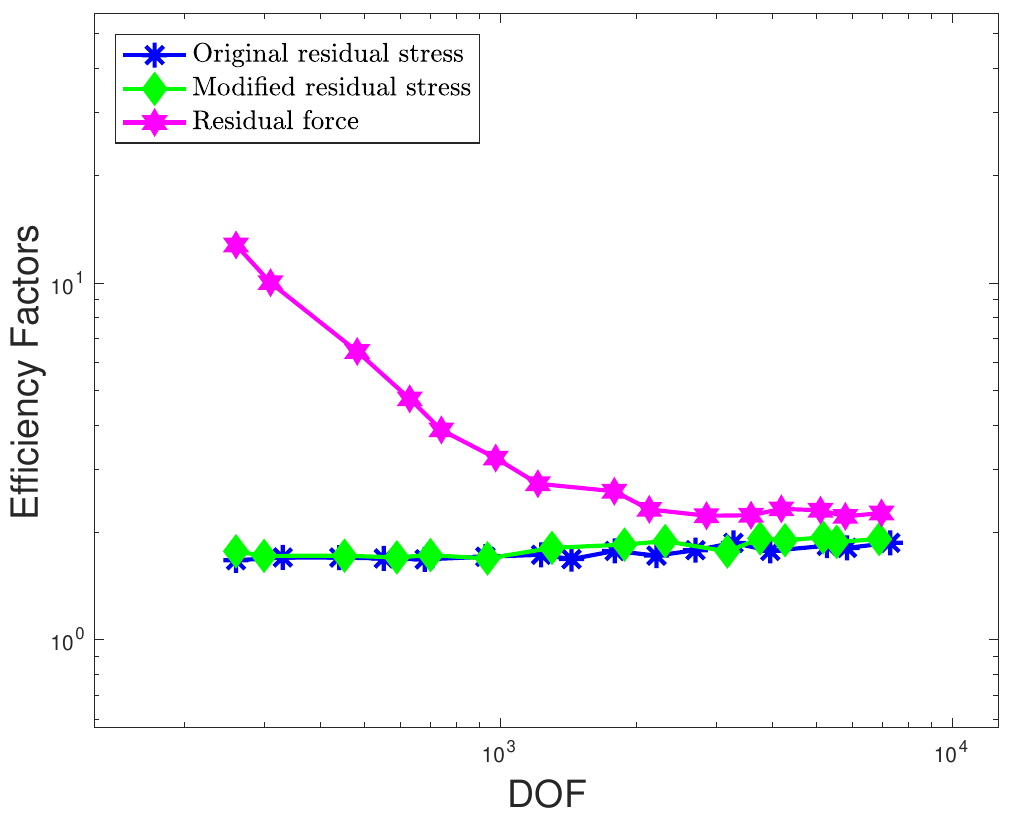}}			
	\subfloat[Estimation time]{
		\label{fig:time_mcrack_efffac_comp_grac}
		\includegraphics[height=4.5cm]{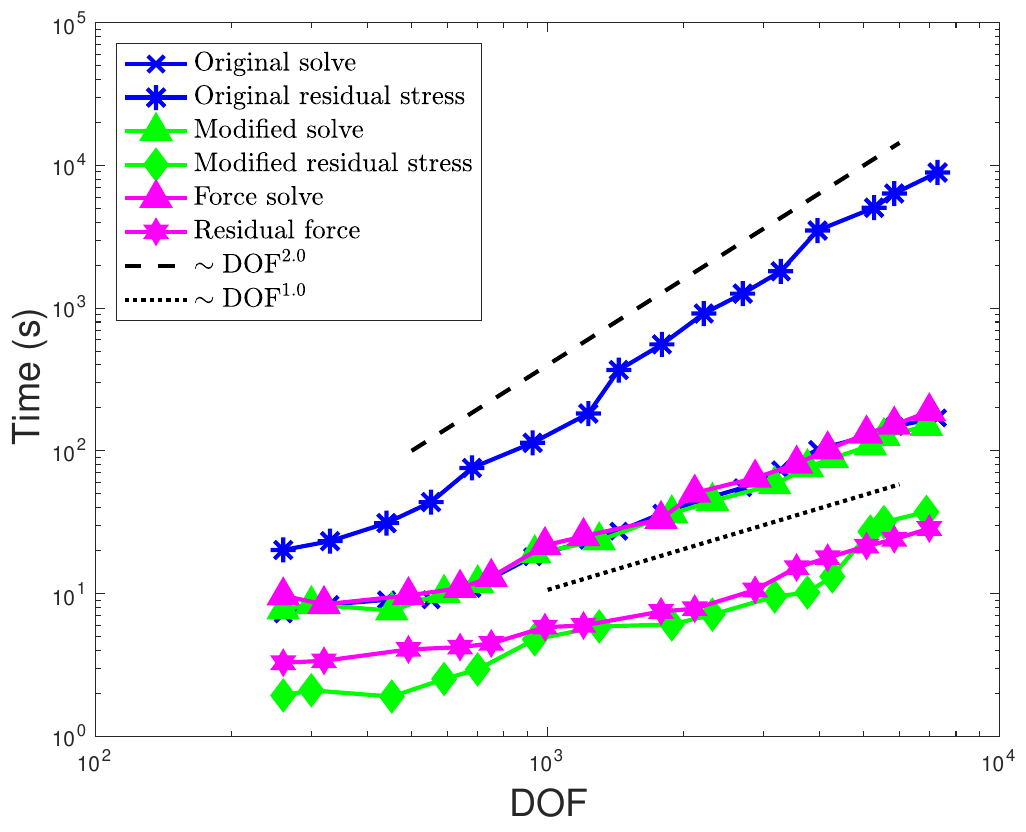}}	
	\caption{The convergence of the error, the CPU time of evaluating the estimators and solving the QC problem and the efficiency factors of the estimators for adaptive GRAC method for the micro-crack.}
	\label{figs:conv_mcrack_comp_grac}
\end{figure} 

Figure \ref{fig:conv_mcrack_comp_grac} shows the convergence of the true error $\|u - I_{\a}u_h\|_{\Use}$ with respect to the number of degrees of freedom (DoF). For the purpose of comparison, we also plot the convergence of the error based on an {\it a priori} graded mesh. It is clearly seen that all three adaptive algorithms achieve the same optimal convergence rate and the lines of convergence are barely distinguishable. 

Figure \ref{fig:conv_mcrack_efffac_comp_grac} shows the efficiency factors  (defined to be the ratios of the error estimators and the true error) of the three different error estimators. As we expect, the residual-force based error estimator developed in the current work only provides an upper bound of the true error and the efficiency factor of which is slightly inferior to those of the stress based estimators developed in \cite{2018_HW_ML_PL_LZ_A_Post_GRAC_2D_SISC} and \cite{2023_YW_HW_Efficient_Adaptivity_AC_JSC}. However, the overestimate is still moderate for a two dimensional problem and it is observed that the residual-force based estimator possesses certain asymptotic exactness.

Figure \ref{fig:time_mcrack_efffac_comp_grac} shows the CPU times for evaluating the {\it a posteriori} error estimators and those for solving the QC problems. As we pointed out at the beginning of Section \ref{sec:sub:resF} and in \cite{2023_YW_HW_Efficient_Adaptivity_AC_JSC}, the {\it original residual-stress based a posteriori error estimator} is highly inefficient as the time of evaluating the error estimator exceeds that of solving the problem itself. On the contrary, the costs of computing the {\it modified residual-stress based error estimator} and the {\it residual-force based error estimator} are comparable and are marginal compared with those of solving the QC problems. This indicates that both estimators may be adopted in practice.

\subsubsection{Adaptive simulations for a micro-crack by blended QC methods}
\label{sec:sub:mcrack blended}


We then conduct the adaptive simulations of a micro-crack by blended QC methods which are done in more than one dimension for the first time. In particular, we test the adaptive BQCE, BQCF and BGFC methods which share the same adaptive algorithm proposed in Algorithm \ref{alg:resF}. We incorporate nearest and next-nearest neighbor interactions in our simulations to demonstrate the capability of our {\it a posteriori} error estimate and adaptive algorithms for dealing with systems with finite range interactions.



The implementations of all three blended QC methods considered in this study remain consistent with those detailed in the prior work~\cite{2016_CO_LZ_GForce_Removal_SISC}. Specifically, the blending function is derived during a preprocessing stage by seeking an approximate minimization of $\|\nabla^2 \beta\|_{L^2}$ as elaborated comprehensively in \cite{2013_ML_CO_BK_BQCE_CMAME}. For the BGFC method, we employ the equivalent "ghost force removal formulation" \eqref{eq:gfc}, selecting $\hat{u}_0=0$ as the predictor for the sake of simplicity.

\begin{figure}[htb]
\centering
	\subfloat[Convergence]{
		\label{fig:conv_mcrack_conv}
		\includegraphics[height=5.5cm]{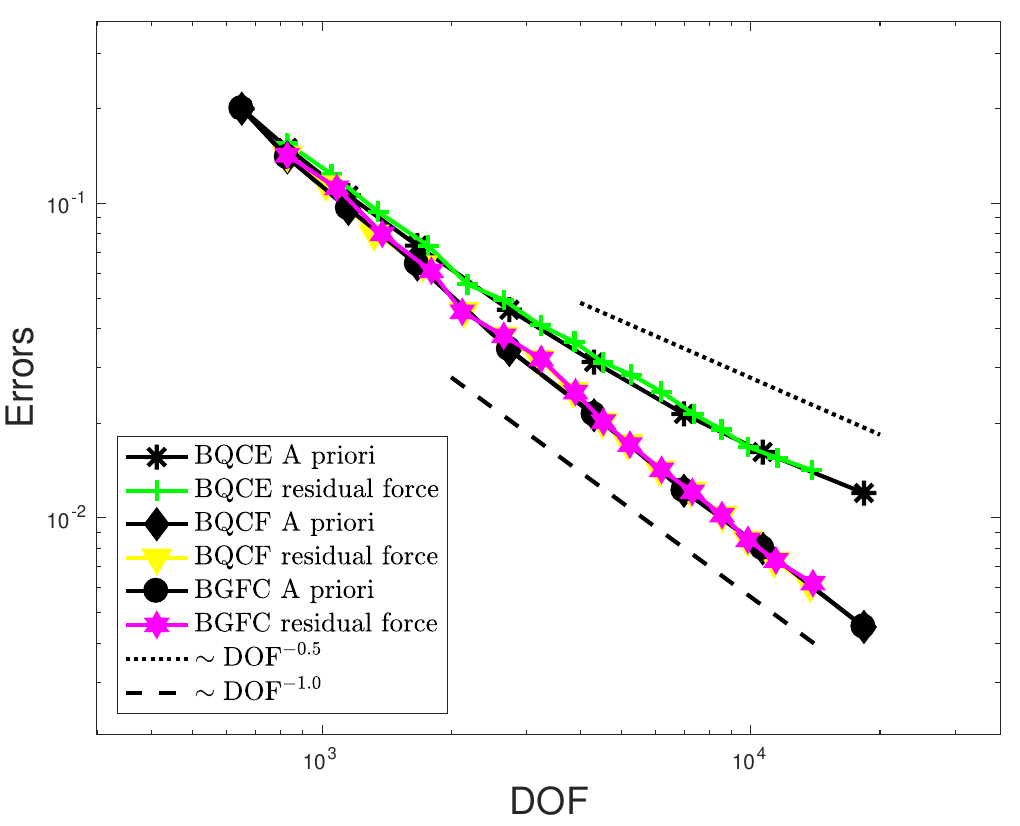}}
	\qquad
	\subfloat[Efficiency factors]{
		\label{fig:conv_mcrack_efffac}
		\includegraphics[height=5.5cm]{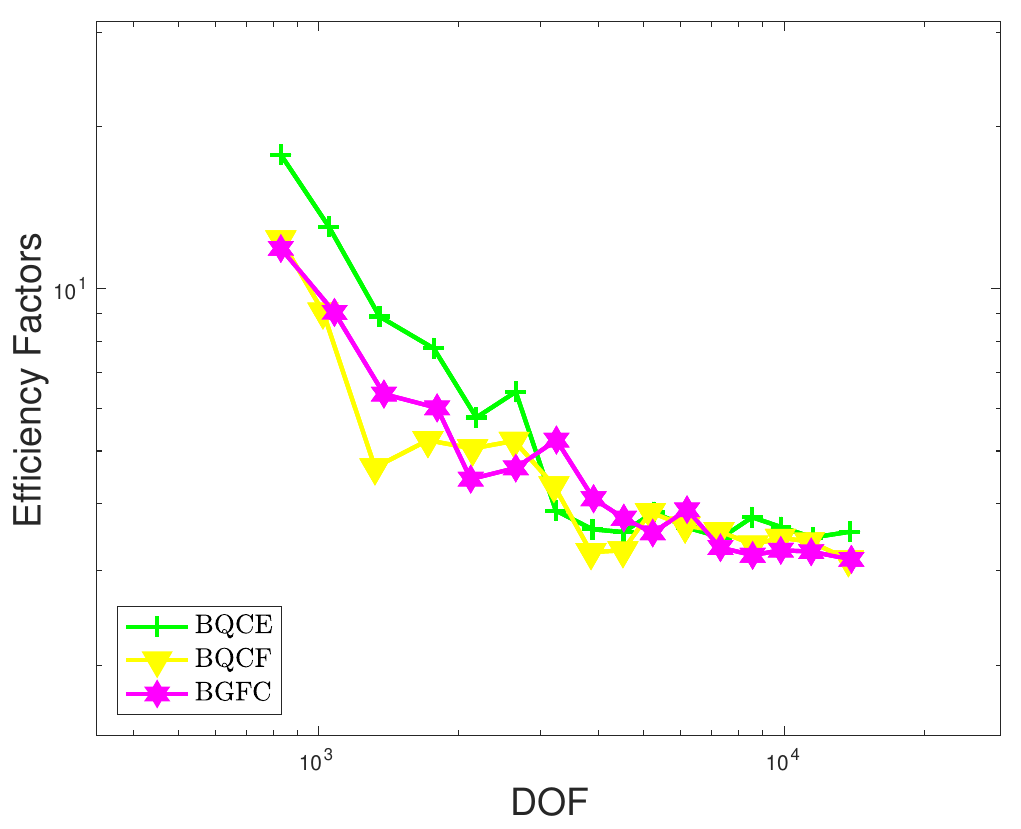}}
	\caption{The convergences of the error and the efficiency factors for both rigorous and approximated error estimators with respect to the number of degrees of freedom for the micro-crack.} 
	\label{figs:conv_mcrack}
\end{figure} 

In Figure~\ref{fig:conv_mcrack_conv}, we present the convergence of the true error for the adaptive BQCE, BQCF, and BGFC methods. Again we observe that all adaptive computations achieve the theoretically verified optimal convergence rate given in Table \ref{tab: decay rate} compared with an {\it a priori} graded mesh.

Figure~\ref{fig:conv_mcrack_efffac} displays the efficiency factors of the residual-force based error estimator for the three adaptive blended QC methods. Again the estimators overestimate the true approximation error at the beginning of the adaptive simulations but become moderate asymptotically which are similar as those for the adaptive QC method with sharp interface. 



\begin{figure}[htb!]
\centering
	\subfloat[CPU times]{
		\label{fig:time_mcrack}
		\includegraphics[height=5.5cm]{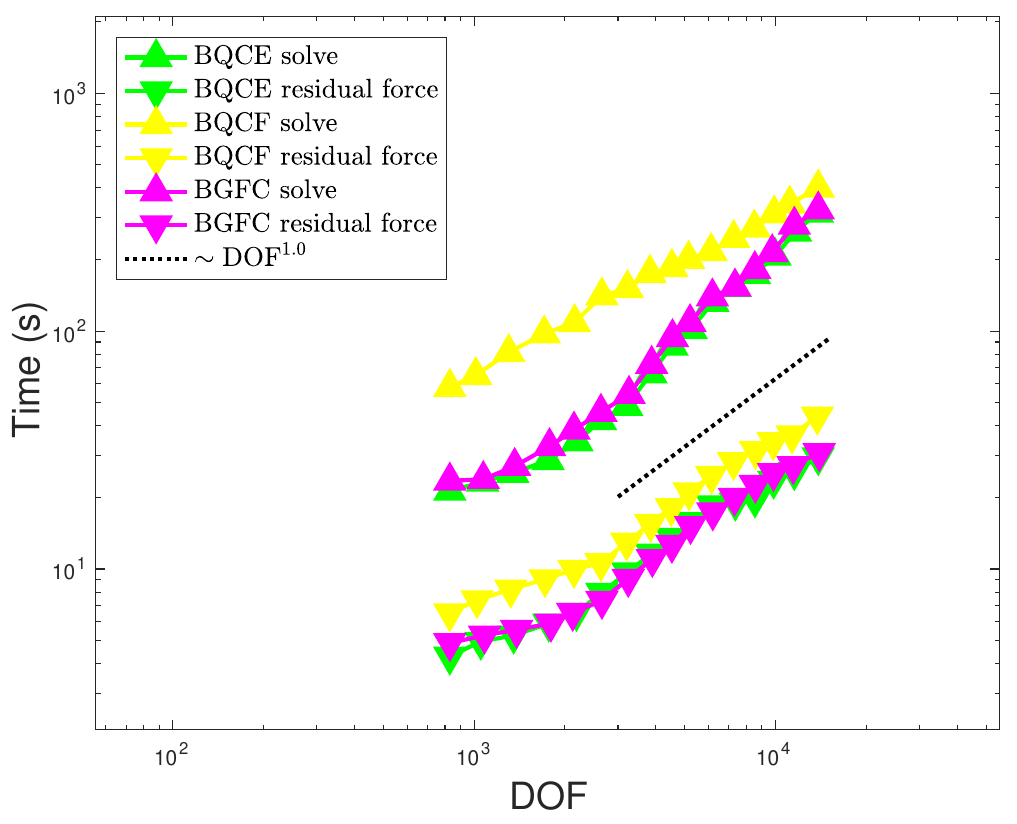}}
		\qquad
	\subfloat[$R_{\a}/R_{\b}$]{
		\label{fig:rarb_mcrack}
		\includegraphics[height=5.5cm]{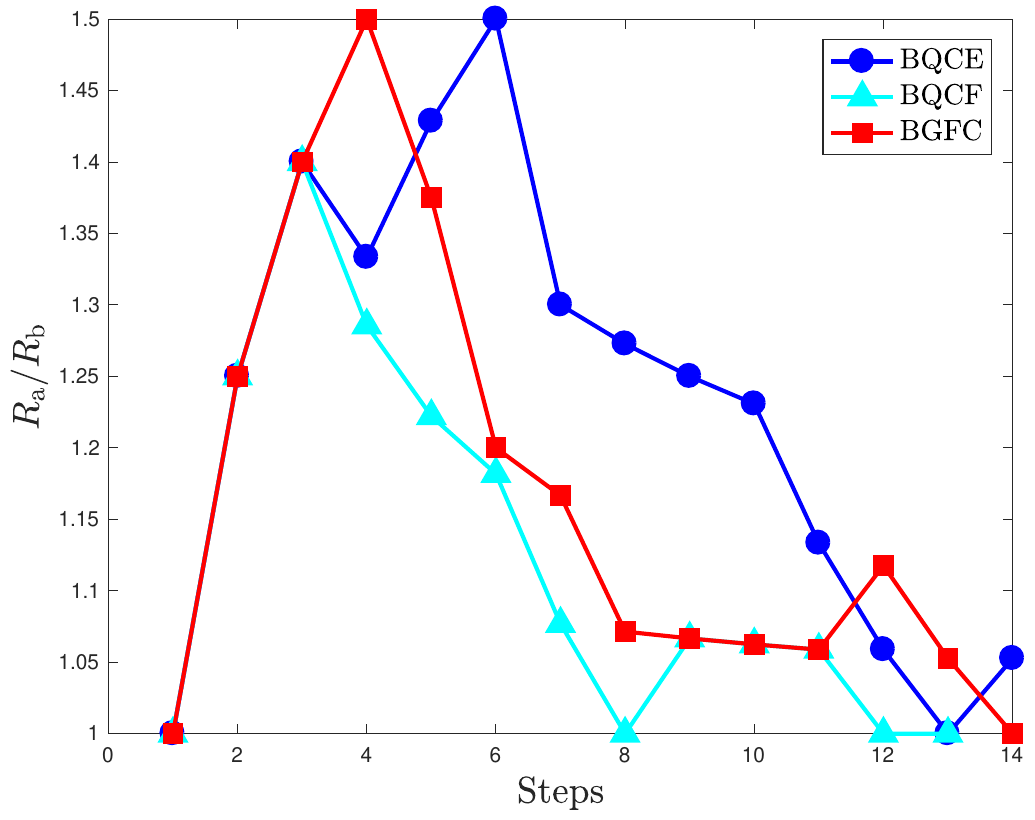}}
	\caption{The CPU times in each steps (left) and the ratio between the radius of the atomistic region $R_{\a}$ and the width of the blending region $R_{\b}$ (right) in the adaptive simulations for the micro-crack. }
	\label{figs:time_and_ratio_mcrack}
\end{figure}



Figure~\ref{fig:time_mcrack} illustrates the CPU times required for computing the error estimators and solving the corresponding blended QC problems, plotted against the number of degrees of freedom $N$. It is evident that the CPU times for the evaluations of the error estimators exhibit a linear scaling for the blended QC and are marginal to those for solving the problems as well. In addition,  we observe that solving the BQCF problem is more computationally expensive than solving for the BQCE or the BGFC under the same number of degrees of freedom. This is because we need to solve a large scale nonlinear system in the BQCF rather than optimization problems, and somehow implies a possible advantage of the energy based methods \cite{2013_ML_CO_BK_BQCE_CMAME, 2014_CO_LZ_GRAC_Coeff_Optim_CMAME}.



We plot the ratio between $R_{\a}$ and $R_{\b}$ in Figure~\ref{fig:rarb_mcrack}, where $R_{\a}$ is the radius of the atomistic region while $R_{\b}$ represent the width of the blending region. We observe that our adaptive algorithms achieve nearly optimal balance, i.e., $R_{\a} \approx R_{\b}$, which also verifies the {\it a priori} assumption on the relationship presented in~\cite[Eq. (19)]{2016_XL_CO_AS_BK_BQC_Anal_2D_NUMMATH}). Furthermore, the ratio is consistently greater than one, indicating that more atoms are allocated to the atomistic region during the adaptive simulations. This substantiates the effectiveness of our adaptive algorithms as they consistently strives to reduce errors by increased atomistic resolutions.

\subsection{Anti-plane screw dislocation}
\label{sec:sub:screw}


The second type of defect we consider for the adaptive simulation is the anti-plane screw dislocation. Following \cite{2016_EV_CO_AS_Boundary_Conditions_for_Crystal_Lattice_ARMA}, we restrict ourselves to an anti-plane shear motion which is illustrated in Figure~\ref{fig:geom_screw}. In this case, the Burgers vector is $b=(0,0,1)^T$ and the center of the dislocation core is chosen to be $\hat{x}:=\frac{1}{2}(1,1,\sqrt{3})^T$, and we assume that there is no 
additional shear deformation applied. The unknown for the anti-plane model is the displacement in $e_3$-direction. The derivation of the far-field {\it predictor} $u_0$ for anti-plane screw dislocation is reviewed in ~\ref{sec:sub:apd:disloc}.

We use a
simplified EAM-type potential here (cf. \cite[Section 2.7]{2016_EV_CO_AS_Boundary_Conditions_for_Crystal_Lattice_ARMA}) which is given by 
\[
V_{\ell}(y) := G\Big( \sum_{\rho \in \Rg_{\ell}} \phi\big( y(\ell+\rho) - y(\ell) \big) \Big),
\]
where
\[
G(s) = 1 + 0.5s^2 \quad \textrm{and} \quad \phi(r) = \sin^2(\pi r).
\]
Note that in this case, the BQCE and BGFC methods defined by \eqref{eq:Ebqce} and \eqref{eq:gfc} are in fact identical since
$\delta \E^{\rm bqce}_{\rm hom}(0)=0$ in \eqref{eq:gfc}, which is an artefact of the anti-plane setting \cite{2016_CO_LZ_GForce_Removal_SISC}. 

Figure~\ref{figs:decay_screw_ff} plots the decay of the {\it residual forces} $\mathcal{F}^{\a}_{\ell}({\pmb 0})$, which verifies the sharp estimates in \cite[Lemma 5.8]{2016_EV_CO_AS_Boundary_Conditions_for_Crystal_Lattice_ARMA}. The black dashed line validates our selection of the constant ($C^{\rm disloc}\approx 0.30$) utilized in defining the truncation error estimator by equation \eqref{eq:tr_est}. This choice is justified by the observation that all the atoms exhibit residual forces below this line. This shows a pivotal distinction from the scenarios involving only point defects, where the residual forces are precisely zero outside the computational domain. We leverage this distinction to construct the truncation estimator, enhancing the robustness and accuracy of the corresponding adaptive algorithm.

\begin{figure}[htb]
\begin{center}
	\includegraphics[height=5.5cm]{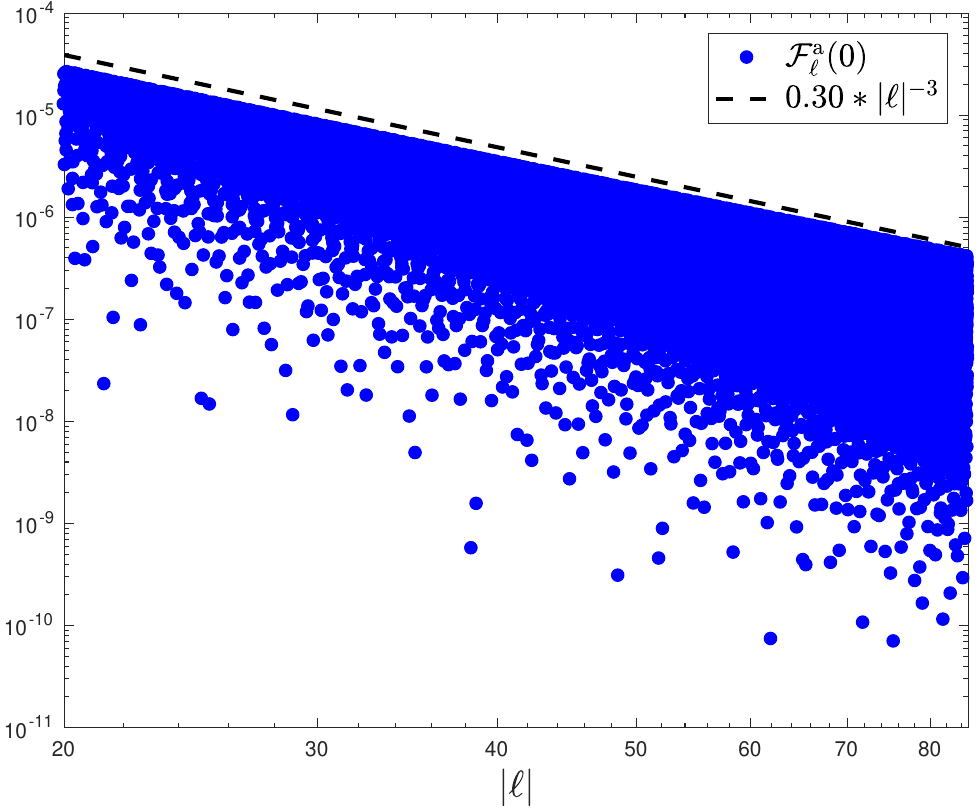}
	\caption{Decay of $\mathcal{F}^{\a}_{\ell}({\pmb 0})$ for the anti-plane screw dislocation.}
	\label{figs:decay_screw_ff}
\end{center}
\end{figure}

\subsubsection{Adaptive simulations for a screw dislocation by sharp interfaced QC methods}
\label{sec:sub:screw sharp interface}

We first present the figures obtained from the adaptive simulations for a screw dislocation by the adaptive GRAC method based on the three different {\it a posteriori} error estimators with the corresponding adaptive algorithms. All three figures in Figure \ref{figs:conv_mcrack_comp_grac} exhibit the same behavior as those in the adaptive simulations for micro-crack, which imply the robustness of our residual- force based error estimator. The only difference compared with the micro-crack is the slower decay of the convergence rate from $O(N^{-1})$ to $O(N^{-0.5})$. This is due to the slower decay of the elastic field induced by the dislocation \cite{2016_EV_CO_AS_Boundary_Conditions_for_Crystal_Lattice_ARMA}. However, we note that this could be improved by fully utilizing the blending methods as shown in \cite{2013_ML_CO_BK_BQCE_CMAME} and also in our adaptive simulations in the following section. 






\begin{figure}[htb]
\centering
	\subfloat[Convergence]{
		\label{fig:conv_screw_comp_grac}
		\includegraphics[height=4.5cm]{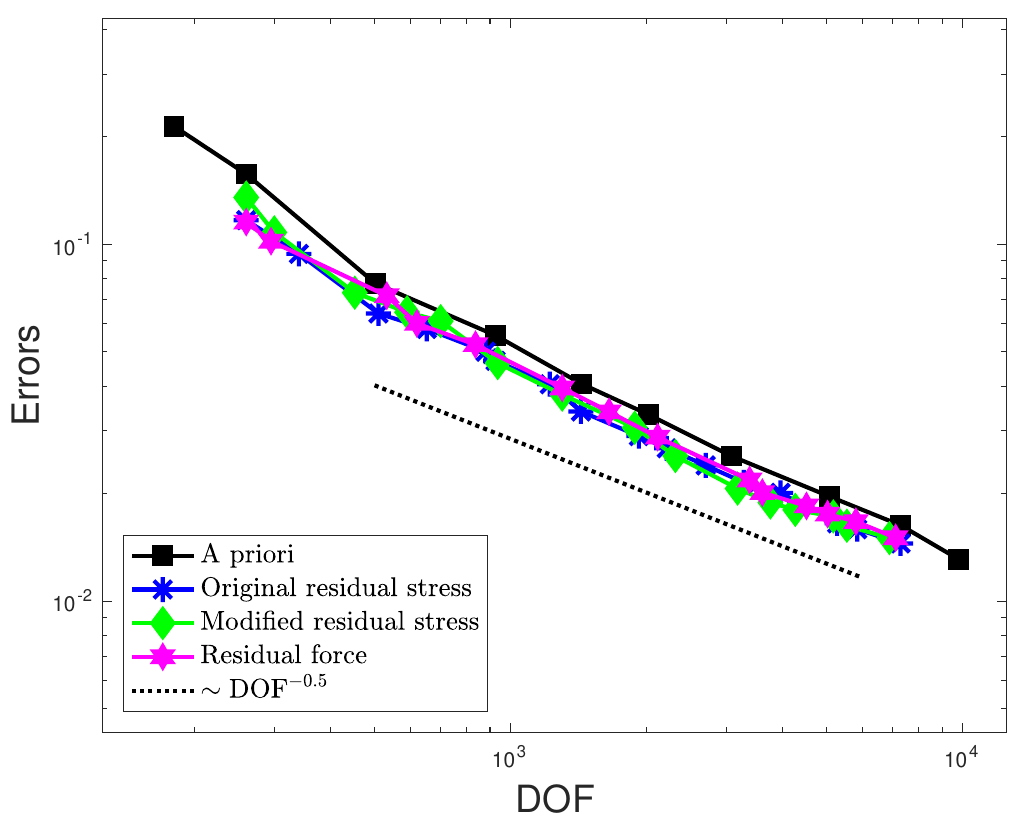}}
	\subfloat[Efficiency factors]{
		\label{fig:conv_screw_efffac_comp_grac}
		\includegraphics[height=4.5cm]{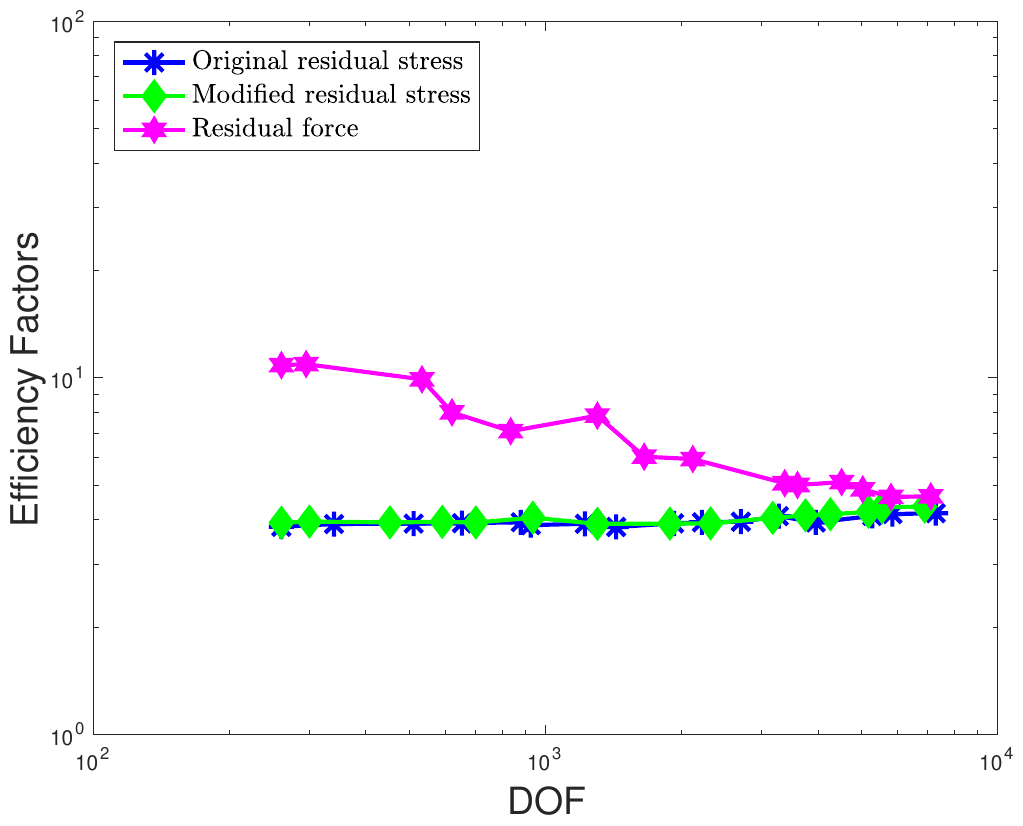}}			
	\subfloat[Estimation time]{
		\label{fig:time_screw_efffac_comp_grac}
		\includegraphics[height=4.5cm]{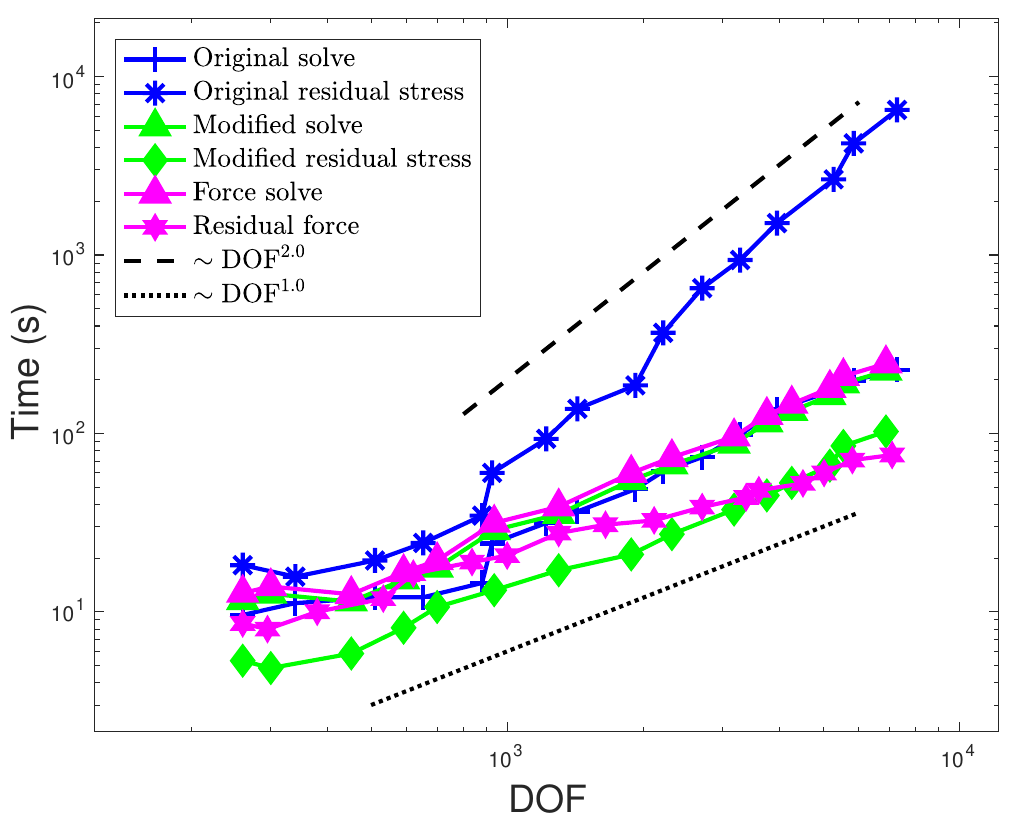}}	
	\caption{The convergence of the error, the CPU time of evaluating the estimators and solving the QC problem and the efficiency factors of the estimators for adaptive GRAC method for screw dislocation.}
	\label{figs:conv_mcrack_comp_grac}
\end{figure} 




\subsubsection{Adaptive simulations for a screw dislocation by blended QC methods}
\label{sec:sub:mcrack blended}

We then conduct the adaptive simulations of a screw dislocation by blended QC methods.  We observe in Figure~\ref{fig:conv_screw_conv} that all adaptive simulations achieve the same optimal convergence rate compared with the {\it a priori} graded mesh. It is noteworthy that the BGFC method is identical to the BQCE method in the anti-plane setting. This is because the correction term $\delta \E^{\rm bqce}_{\rm hom}(0)=0$ in \eqref{eq:gfc} which is explained in \cite[Section 4.2]{2016_CO_LZ_GForce_Removal_SISC}. Figure~\ref{fig:conv_screw_efffac} plots the efficiency factors for the blended QC methods, which are still moderate. Figure~\ref{fig:time_screw} plots the total CPU times versus the numbers of degrees of freedom $N$ for the evaluations of the error estimator as well as solving the QC problems themselves. Figure~\ref{fig:rarb_screw} visualizes the relationship between $R_{\a}$ and $R_{\b}$ during the adaptive simulations. 



\begin{figure}[htb]
\centering
	\subfloat[Convergence]{
		\label{fig:conv_screw_conv}
		\includegraphics[height=5.5cm]{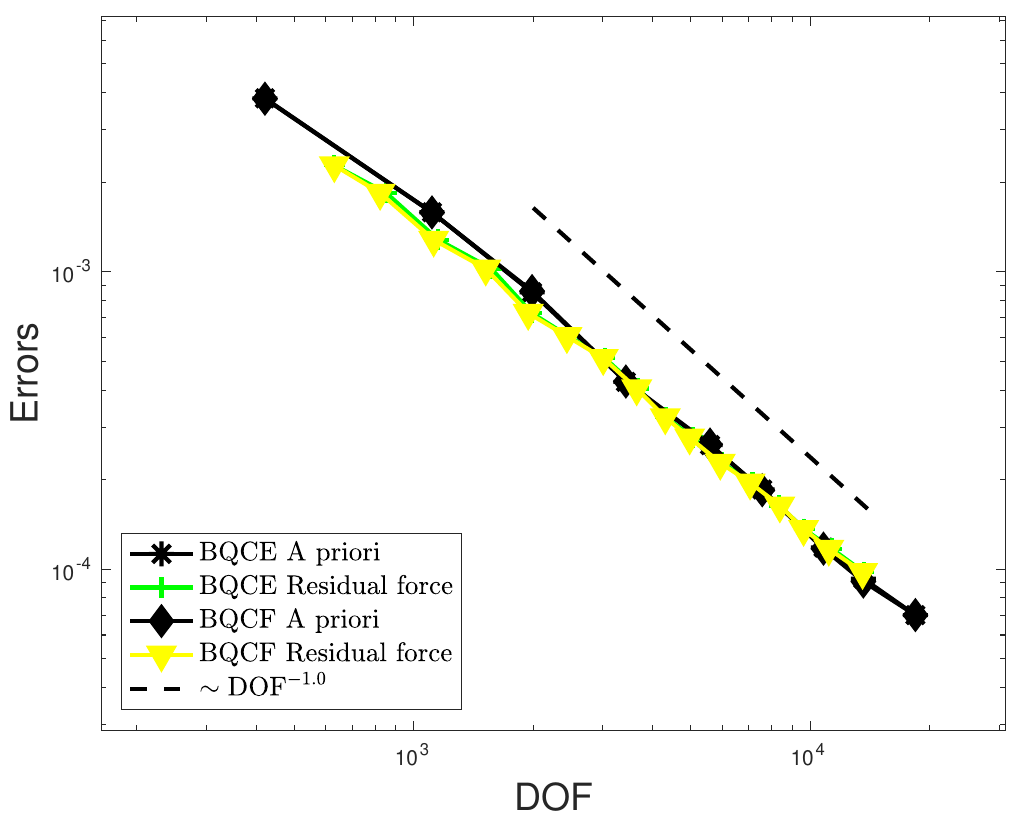}}
	\qquad
	\subfloat[Efficiency factors]{
		\label{fig:conv_screw_efffac}
		\includegraphics[height=5.5cm]{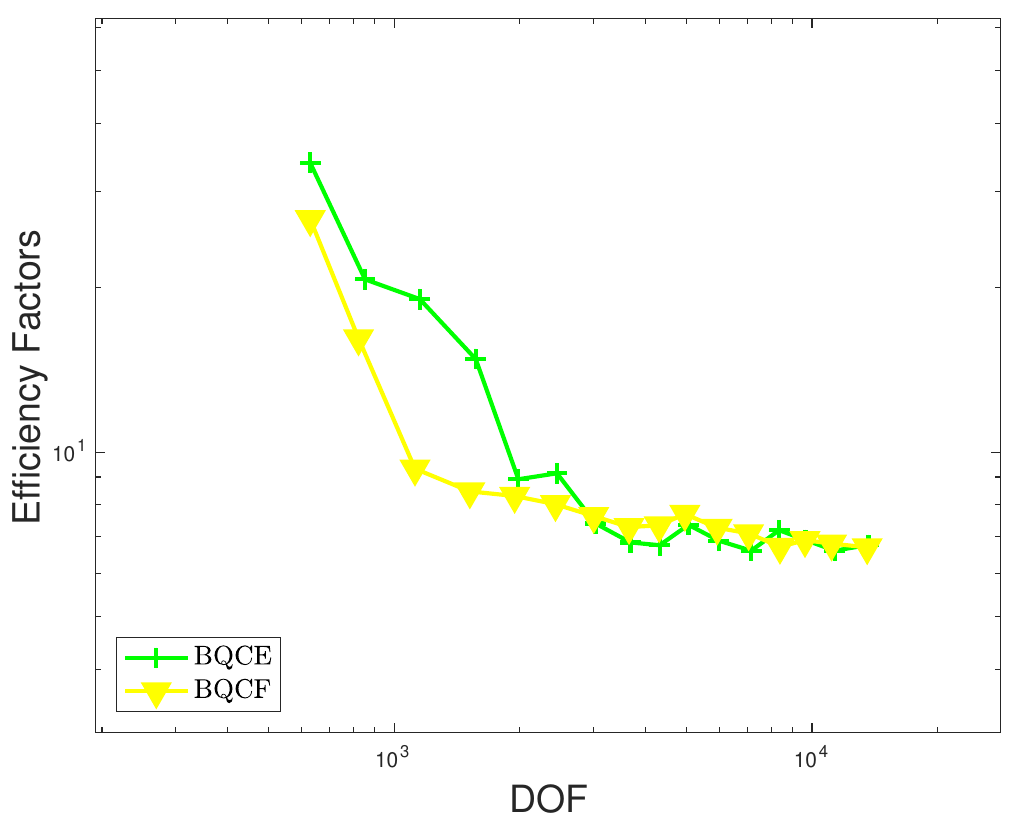}}
	\caption{The convergences of the error and the efficiency factors for different error estimators for the anti-plane screw dislocation. In this case BGFC is identity to BQCE due to the artefact of the anti-plane setting.}
	\label{figs:conv_screw}
\end{figure} 
\begin{figure}[htb!]
\centering
	\subfloat[CPU times]{
		\label{fig:time_screw}
		\includegraphics[height=5.5cm]{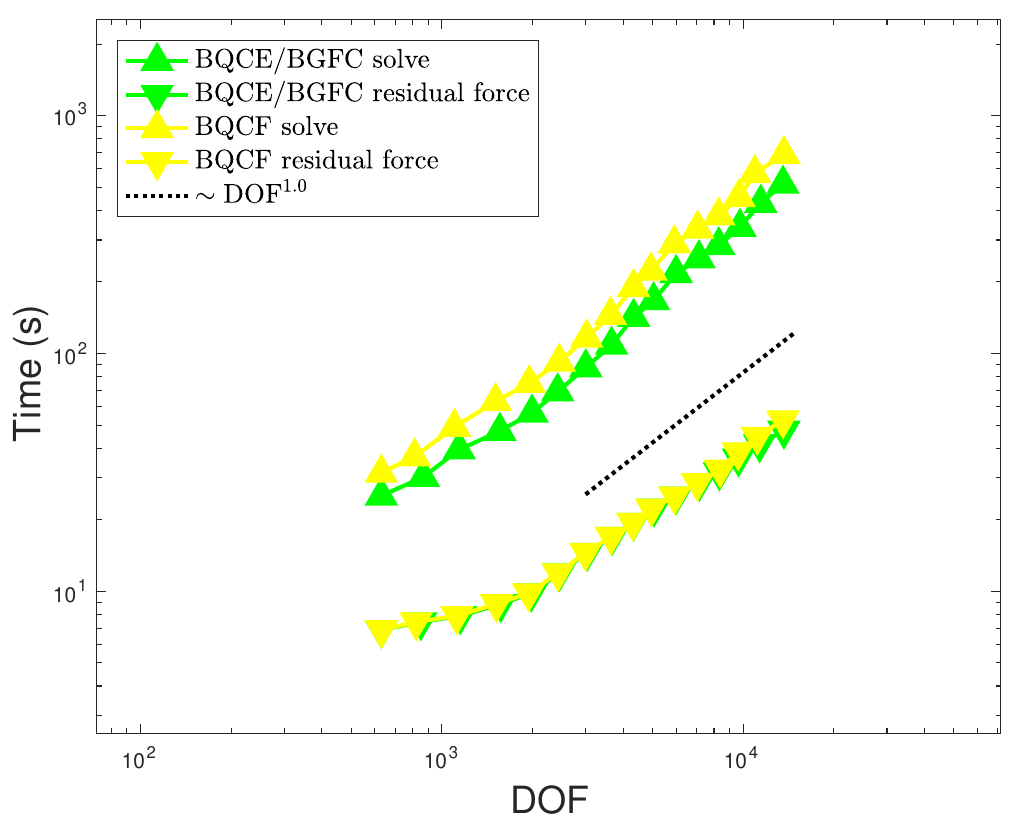}}
		\qquad
	\subfloat[$R_{\a}/R_{\b}$]{
		\label{fig:rarb_screw}
		\includegraphics[height=5.5cm]{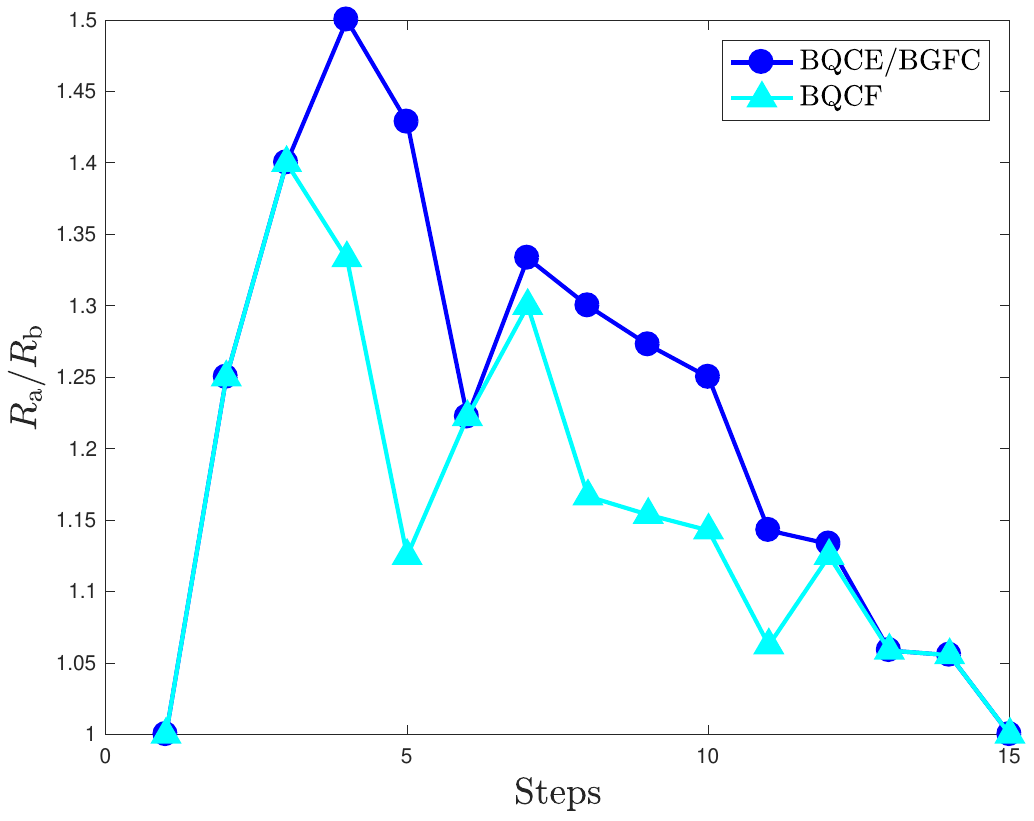}}
	\caption{The CPU times for each steps (left) and the ratio between the radius of the atomistic region $R_{\a}$ and the width of the blending region $R_{\b}$ (right) in the adaptive computations for the anti-plane screw dislocation.}
	\label{figs:time_and_rarb_screw}
\end{figure}

In summary, we observe similar results as those in the case of the micro-crack. These results essentially demonstrate not only the accuracy but also the efficiency of the proposed residual-force based error estimator, as well as the effectiveness of the corresponding adaptive algorithms in the context of the anti-plane screw dislocation.


\subsubsection{Discussion}

Besides the capability of guiding the adaptivity for the blended QC methods in more than one dimension which is yet to be developed for the residual-stress based approach, we have to comment on two additional advantages of the residual-force based error estimates particularly for sharp interfaced methods that are not easily illustrated by figures (c.f \cite{2018_HW_ML_PL_LZ_A_Post_GRAC_2D_SISC, 2023_YW_HW_Efficient_Adaptivity_AC_JSC, 2020_ML_PL_LZ_Finite_Range_A_Post_2D_CICP}). 

The first one is the significant reduction of complexity in the implementation. Besides a unified formulation and thus the reusability of the codes for different methods, the residual-force based estimator does not rely on the so-called ``stress tensor correction" which involves a laborious optimization problem \cite[Algorithm 1]{2018_HW_ML_PL_LZ_A_Post_GRAC_2D_SISC}.  The necessity of such correction stems from the inherent lack of uniqueness in the formulation of the residual stress. Without such correction, the adaptive algorithm will tend to put redundant degrees of freedom around the interface in each adaptive step so that the optimal convergence rate may not be achieved. In addition, according to our sampling strategy, there is no need of a manually selected "interface buffer" region where the finite element nodes are deliberately set to coincide with the lattice points \cite[Figure 5]{2023_YW_HW_Efficient_Adaptivity_AC_JSC}.



The second one is the possibility to be extended to the systems with finite range interactions. The residual-stress based error estimate for the GRAC method with finite range interactions has been discussed in~\cite{2020_ML_PL_LZ_Finite_Range_A_Post_2D_CICP}. However, in the setting of finite range interactions, the atomistic stress loses its locality and thus prompts certain ``ad hoc" approximations to render the residual amenable to the elementwise summation. Moreover, such approximations are effective only for simple point defects but become untenable for more intricate defects such as dislocations and cracks which are discussed immediately in the subsequent section.  



\subsection{Anti-plane crack}
\label{sec:sub:crack}


We finally consider the defect of anti-plane crack, which is of more practical interest and considerably more complicated than the previous two types of defects. To the best knowledge of the authors, it is also the first time that such type of defect is simulated by QC methods. We apply the same setting as that for the screw dislocation. The only difference is that the far-field {\it predictor} $u_0$ is more complicated, which is briefly reviewed in \ref{sec:sub:apd:crack}. 


The significance and motivation behind the adaptive simulations of cracks, distinct from micro-cracks and screw dislocations, encompass the following key aspects. First of all, the construction of the energy based sharp interfaced method for cracks presents considerable challenges. In particular, according to the fundamental principles of the GRAC method~\cite{2012_CO_LZ_GRAC_Construction_SIAMNUM}, the determination of reconstruction parameters $C_{\ell;\rho, \zeta}$ around the interface for cracks is hindered by the presence of crack tips. Consequently, the blended QC methods appear to be a more suitable choice for simulating cracks. Second, conducting rigorous {\it a priori} analysis of QC methods proves considerably more challenging. Within the framework proposed in \cite{2016_EV_CO_AS_Boundary_Conditions_for_Crystal_Lattice_ARMA}, the critical component of the {\it a priori} analysis is the establishment of rigorous equilibrium decay estimates. This involves an intricate analysis of the lattice Green's function along with the theoretical proof of the estimates of the residual forces. However, when it comes to the {\it a posteriori} error estimates, the intricate technical nuances aforementioned are somehow circumvented and the estimates can be applied to improve the efficiency of the simulations of the cracks, which highlight the novelty and the significance of our proposed methodology. In light of these considerations, we proceed with numerical tests employing the residual-force based error estimator for all three blended QC methods.

We first show in Figure~\ref{figs:decay_crack_ff} the decay of the residual forces $\mathcal{F}^{\a}_{\ell}({\pmb 0})$ for the anti-plane crack. We observe that the residual forces on the {\it surface atoms} lying around the crack tip only decays as $|\ell|^{-1.5}$ and those on the other layers decay as $|\ell|^{-2.5}$. The black dashed and the black dotted lines validate our choice of the constants ($C^{\rm crack}_{\rm surf}$ for the {\it surface atoms} and $C^{\rm crack}_{\rm oth}$ for others) as the residual forces are all under these two lines. According to the discussions in Remark~\ref{rmk:trun_crack}, we choose $C^{\rm crack}=C^{\rm crack}_{\rm surf}+C^{\rm crack}_{\rm oth}=3.35$ in our adaptive simulations.

\begin{figure}[htb]
\begin{center}
	\includegraphics[height=5.5cm]{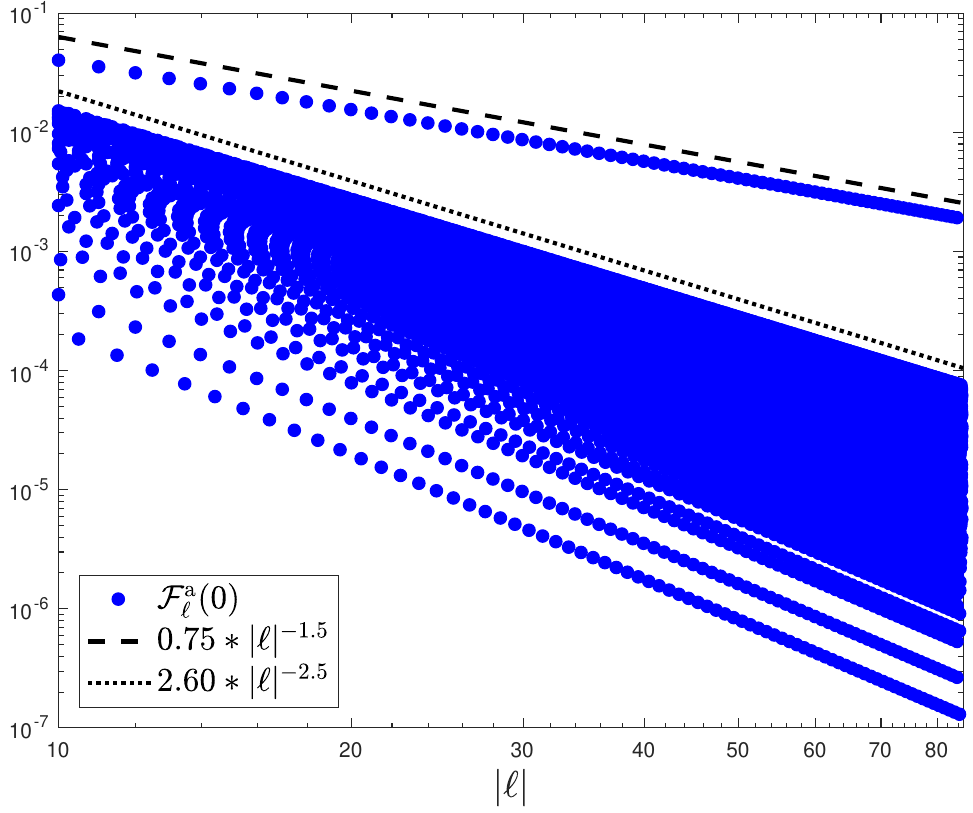}
	\caption{Decay of $\mathcal{F}^{\a}_{\ell}({\pmb 0})$ for the anti-plane crack.}
	\label{figs:decay_crack_ff}
\end{center}
\end{figure}

The computational mesh and the atomistic and blending regions used in the construction of the blended QC methods are then illustrated in Figure~\ref{figs:geom_crack}. Here we need to point out that though the same anti-plane setting is applied as that for the anti-plane screw dislocation, the BQCE and the BGFC methods are no longer identical for the anti-plane crack since a row of mesh around the crack tip is removed as shown in Figure~\ref{figs:geom_crack}.

\begin{figure}[htb]
\begin{center}
	\includegraphics[height=5.5cm]{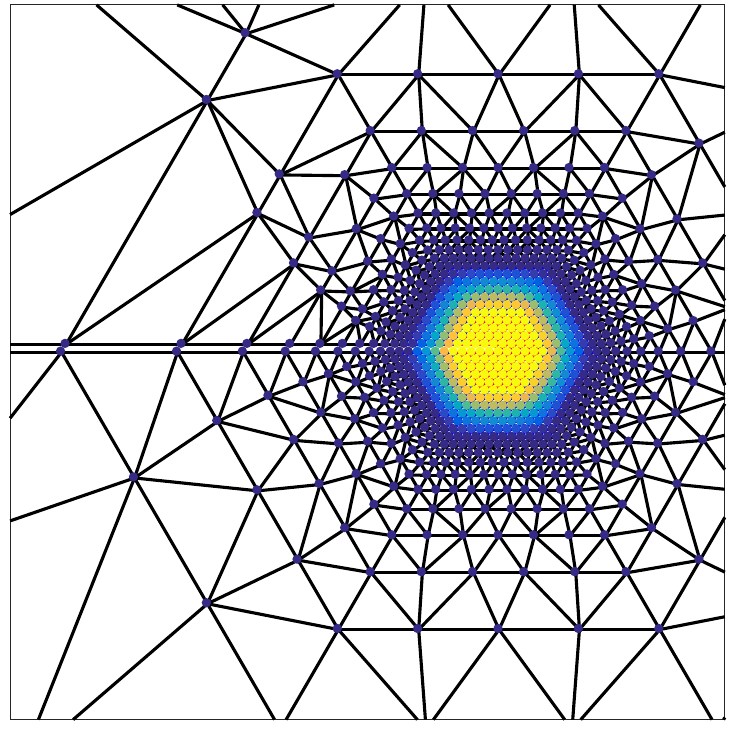}
	\qquad
	\includegraphics[height=5.5cm]{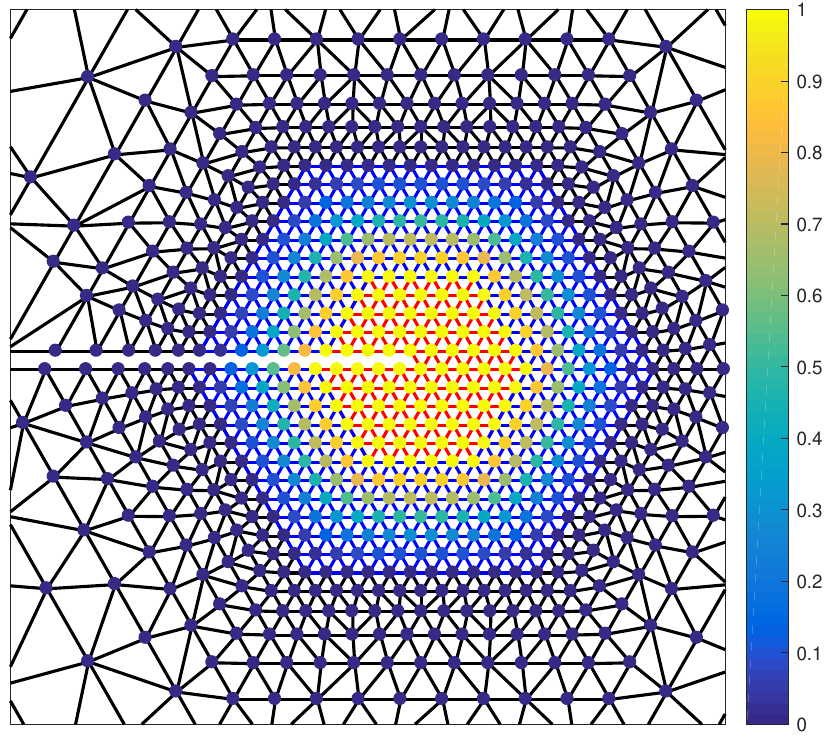}
	\caption{The illustration of the computational mesh and the atomistic
region as used in the construction of the blended QC methods for anti-plane crack. A row of mesh around crack tip is removed to simulate the anti-plane crack system \cite{2019_BM_TM_CO_Anal_Antiplane_Fracture_M3AS}.}
	\label{figs:geom_crack}
\end{center}
\end{figure}

Figure~\ref{fig:conv_mcrack_conv} shows that all the adaptive methods achieve the same optimal convergence rate ($N^{-0.25}$ for all three blended methods) compared with an a priori graded mesh. As we pointed out in Remark \ref{rmk:trun_crack}, the {\it rigorous a priori} error estimate for this case deserves further investigation.


\begin{figure}[htb]
\centering
	\subfloat[Convergence]{
		\label{fig:conv_crack_conv}
		\includegraphics[height=5.5cm]{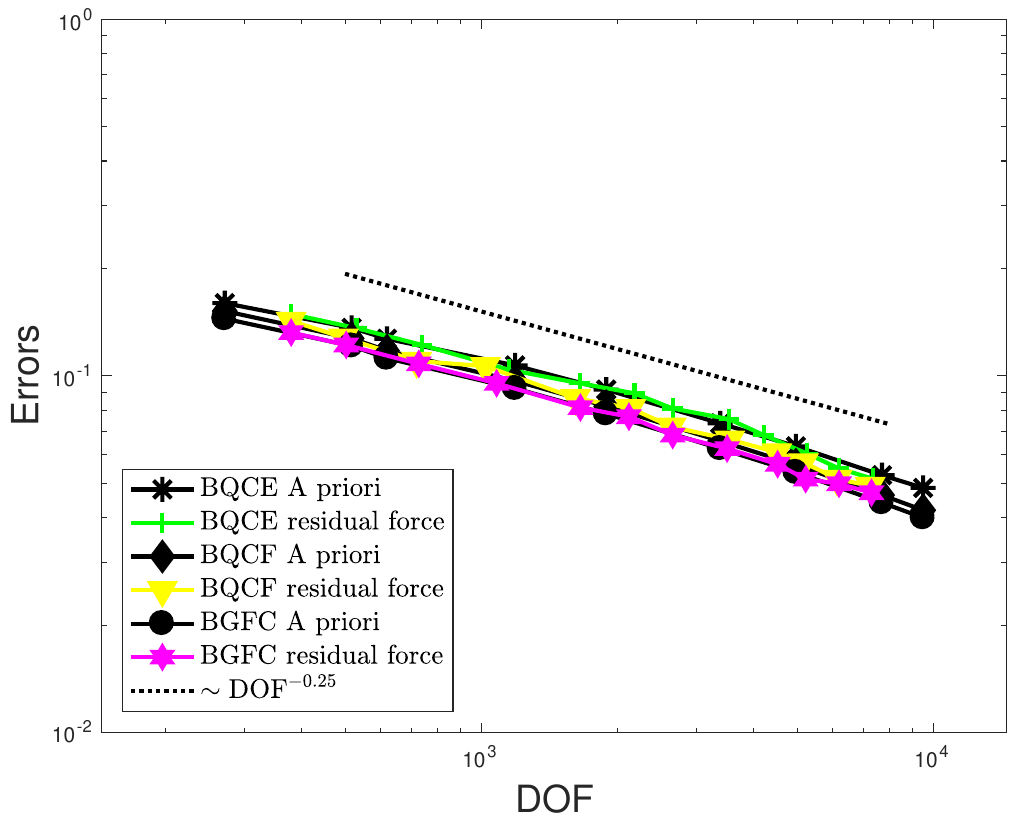}}
	\qquad
	\subfloat[Efficiency factors]{
		\label{fig:conv_crack_efffac}
		\includegraphics[height=5.5cm]{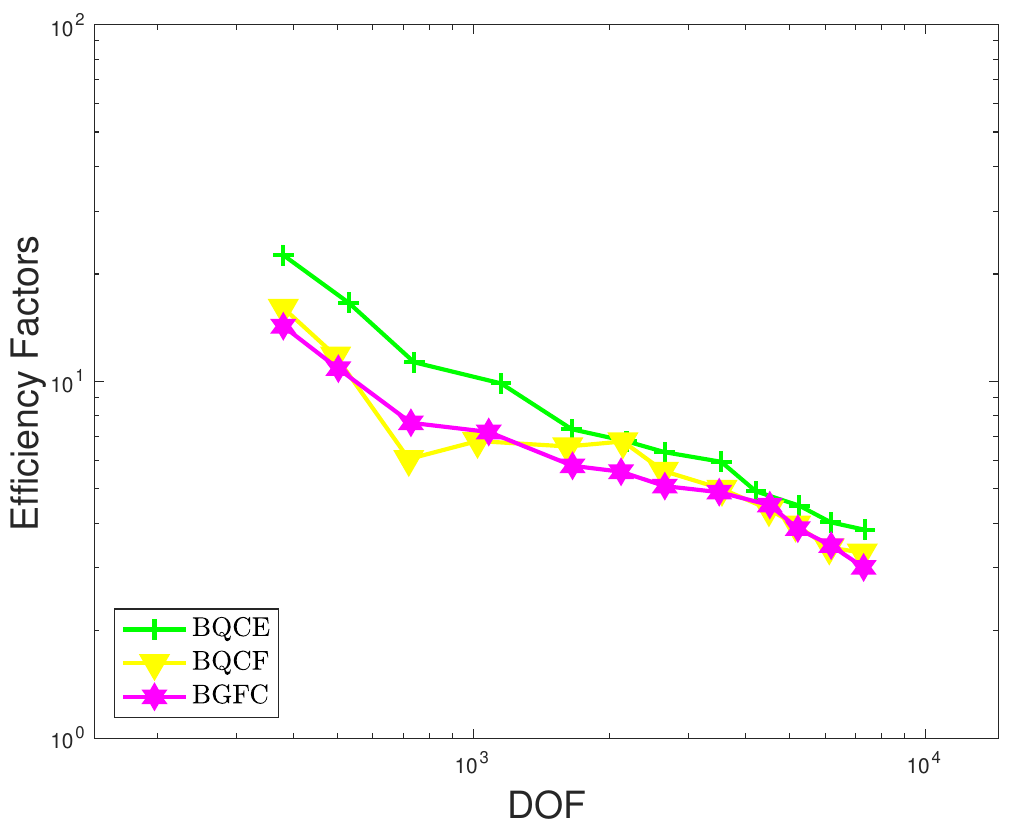}}
	\caption{The convergences of the error and the efficiency factors for different error estimators with respect to the number of degrees of freedom for the anti-plane crack.}
	\label{figs:conv_crack}
\end{figure} 

Figure~\ref{fig:conv_crack_efffac}, Figure~\ref{fig:time_crack} and Figure~\ref{fig:rarb_crack} plot the efficiency factors, CPU times and the ratio between $R_{\a}$ and $R_{\b}$.  These figures exhibit the same behaviors as those in the cases of the micro-crack and the anti-plane screw dislocation. This verifies the robustness of the residual-force based error estimate and the algorithms, and implies the potential capability of the adaptive strategy for more complicated defects.

\begin{figure}[htb]
\centering
	\subfloat[CPU times]{
		\label{fig:time_crack}
		\includegraphics[height=5.5cm]{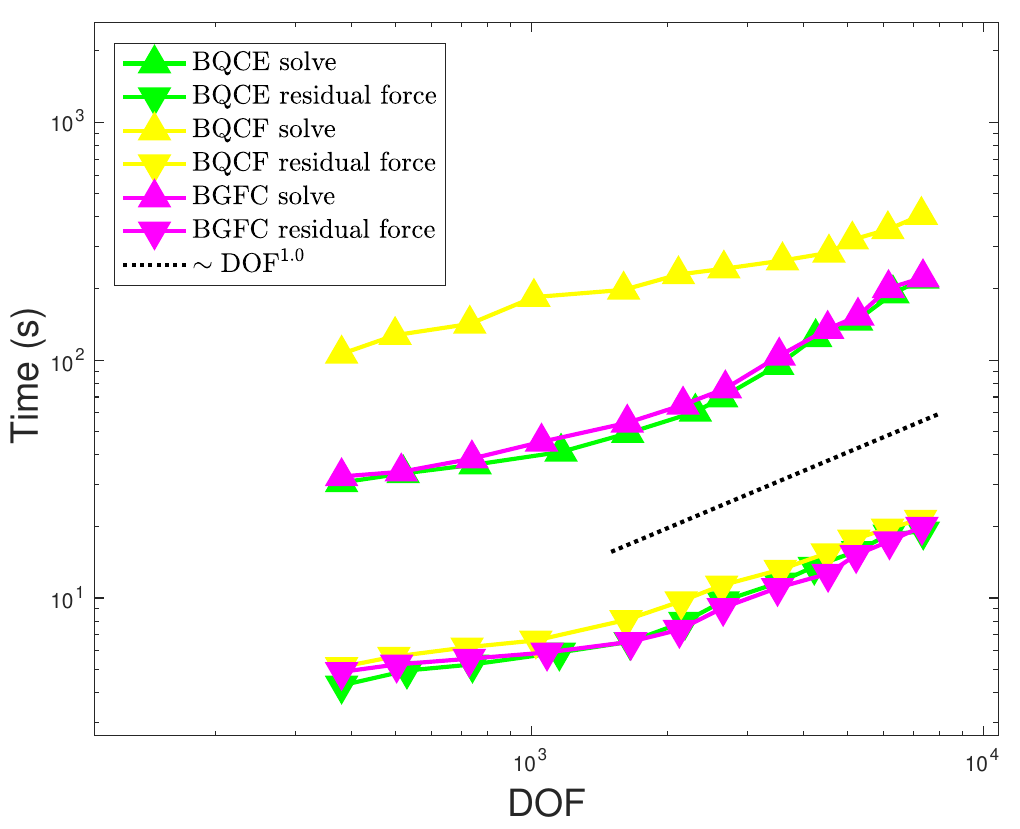}}
		\qquad
	\subfloat[$R_{\a}/R_{\b}$]{
		\label{fig:rarb_crack}
		\includegraphics[height=5.5cm]{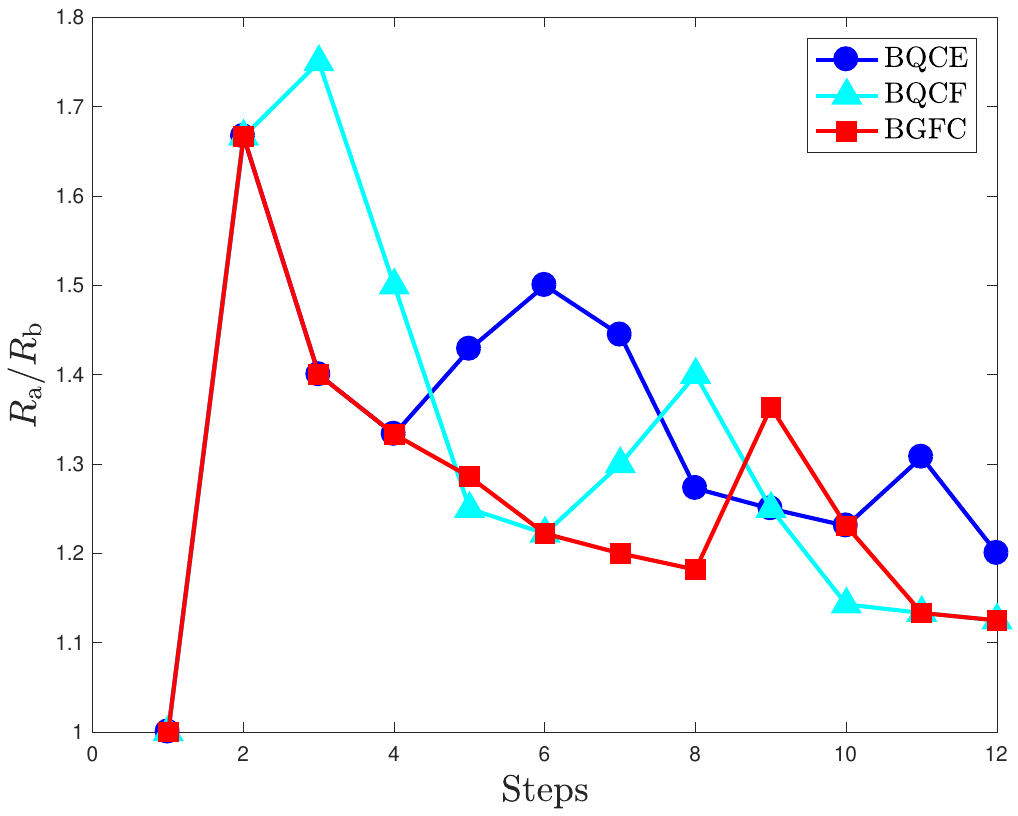}}
	\caption{The CPU times for each steps (left) and the ratio between the radius of the atomistic region $R_{\a}$ and the width of the blending region $R_{\b}$ (right) for three blended QC methods in the adaptive computations for the anti-plane crack.}
	\label{figs:time_and_rarb_crack}
\end{figure} 




\section{Conclusion}
\label{sec:con}

In this work we propose a unified framework for theory based {\it a posteriori} error estimate of consistent quasicontinuum (QC) methods for the molecular mechanics simulations of crystal defects.  Based on the unified error estimate, we develop the adaptive algorithms for a range of QC methods and conduct the adaptive simulations for several important crystalline defects, including the micro-crack, the anti-plane dislocation and the anti-plan crack. In particular, we have essentially made three important advancements in this study compared with previous researches on the adaptive QC methods \cite{2014_CO_HW_A_Post_ACC_IMANUM, 2018_HW_ML_PL_LZ_A_Post_GRAC_2D_SISC, 2023_YW_LZ_Adaptive_Multigrid_AC_3D_JCP, 2023_YW_HW_Efficient_Adaptivity_AC_JSC}. The first one is the derivation of the unified residual-force based {\it a posteriori} error estimate that is independent of particular QC method we consider. This leads to the save of a significant amount of effort for the implementation of the {\it a posteriori} error control problems. The second one is the development of the adaptive strategy for the blended QC methods. Such adaptive strategy is able to simultaneously guide the refinement of the mesh in the continuum region as well as the allocations of the atomistic and blending regions. The third one is the simulations of a crack by adaptive QC methods for the first time. It significant since the geometry of a crack is complicated and the {\it a priori} analysis possesses substantial challenges in the context of QC methods.

Although we believe that the {\it a posteriori} error estimate and the adaptive algorithms in this work are generally applicable to a wide range of concurrent multiscale methods, this research still raises a few open problems for further investigation.

The first one is the {\it a priori} error estimates of QC methods for the straight edge dislocation and the anti-plane crack. To address this issue, a possible solution is to investigate an equivalent ghost force removal formulation \eqref{eq:gfc}, where a suitable ``predictor" $\hat{u}_0$ needs to be constructed. The second one is the adaptive simulations for defects in three dimensions. This presents significant challenges in mesh generation and adaptation in the implementation. Recent advancements \cite{2023_YW_LZ_Adaptive_Multigrid_AC_3D_JCP} may provide a preliminary in this direction. The third one is the extension to more complex crystal defects. Practical crystal defects such as dislocation nucleation and grain boundaries have already attracted significant attention. Since the {\it a priori} analysis for such problems is difficult, these are precisely the scenarios where the adaptive methods are expected to demonstrate their great advantage. We believe that the approach presented in the current research contributes a solid groundwork for these practical and important problems.

	\appendix
	\renewcommand\thesection{\appendixname~\Alph{section}}
	

\section{Far-field Predictors}
\label{sec:appendixU0}
\renewcommand{\theequation}{A.\arabic{equation}}
\renewcommand{\thefigure}{A.\arabic{figure}}
\renewcommand{\thealgorithm}{A.\arabic{algorithm}}
\setcounter{equation}{0}
\setcounter{figure}{0}
\setcounter{algorithm}{0}

\subsection{Dislocations}
\label{sec:sub:apd:disloc}

\newcommand{\ulin}{u^{\rm lin}}
\newcommand{\burg}{{\sf b}}

We model dislocations by following the setting used in \cite{2016_EV_CO_AS_Boundary_Conditions_for_Crystal_Lattice_ARMA}. We consider a model for straight dislocations obtained by projecting a 3D crystal into 2D. Let $B \in \R^{3\times 3}$ be a nonsingular matrix. Given a Bravais lattice $B\Z^3$ with dislocation direction parallel to $e_3$ and Burgers vector $\burg=(\burg_1,0,\burg_3)$, we consider
displacements $W: B\Z^3 \rightarrow \R^3$ that are periodic in the direction of the dislocation direction of $e_3$. Thus, we choose a projected reference lattice $\L := A\Z^2 := \{(\ell_1, \ell_2) ~|~ \ell=(\ell_1, \ell_2, \ell_3) \in B\Z^3\}$. We also introduce the projection operator 
\begin{equation}\label{eq:P}
    P(\ell_1, \ell_2) = (\ell_1, \ell_2, \ell_3) \quad \text{for}~\ell \in B\Z^3.
\end{equation}
It can be readily checked that this projection is again a Bravais lattice. For anti-plane screw dislocation, $\L$ is obtained as projection of a 3D Bravais lattice along the screw dislocation direction (and the direction of slip) and we restrict the displacements of the form $u = (0, 0, u_3)$.

We follow the constructions in \cite{2019_JB_MB_CO_Crystal_Symmetries_SIAMJMA, 2016_EV_CO_AS_Boundary_Conditions_for_Crystal_Lattice_ARMA} for modeling dislocations and prescribe $u_0$ as follows. Let $\L\subset\R^2$, $\hat{x}\in\R^2$ be the position of the dislocation core and $\Upsilon := \{x \in \R^2~|~x_2=\hat{x}_2,~x_1\geq\hat{x}_1\}$ be the ``branch cut'', with $\hat{x}$ chosen such that $\Upsilon\cap\Lambda=\emptyset$.

We define the far-field predictor $u_0$ by solving the continuum linear elasticity (CLE)
\begin{eqnarray}\label{CLE}
\nonumber
\mathbb{C}^{j\beta}_{i\alpha}\frac{\partial^2 u^{\rm lin}_i}{\partial x_{\alpha}\partial x_{\beta}} &=& 0 \qquad \text{in} ~~ \R^2\setminus \Upsilon,
\\
u^{\rm lin}(x+) - u^{\rm lin}(x-) &=& -\burg \qquad \text{for} ~~  x\in \Upsilon \setminus \{\hat{x}\},
\\
\nonumber
\nabla_{e_2}u^{\rm lin}(x+) - \nabla_{e_2}u^{\rm lin}(x-) &=& 0 \qquad \text{for} ~~  x\in \Upsilon \setminus \{\hat{x}\},
\end{eqnarray}
where the forth-order tensor $\mathbb{C}$ is the linearised Cauchy-Born tensor (derived from the potential $V$, see \cite[\S~7]{2016_EV_CO_AS_Boundary_Conditions_for_Crystal_Lattice_ARMA} for more detail).

We mention that for the anti-plane screw dislocation, under the proper assumptions on the interaction range $\Rg$ and the potential $V$, the first equation in \eqref{CLE} simply becomes to $\Delta u^{\rm lin} = 0$ \cite{2019_JB_MB_CO_Crystal_Symmetries_SIAMJMA}. The system \eqref{CLE} then has the well-known solution 
\begin{align}\label{predictor-u_0-dislocation}
u_0(x) := u^{\rm lin}(x) = \frac{\burg}{2\pi}\arg(x-\hat{x}),
\end{align}
where we identify $\R^2 \cong \C$ and use $\Upsilon-\hat{x}$ as the branch cut for arg.

Note that for the purpose of analysis, we have $\nabla u_0 \in C^{\infty}(\R^2\setminus\{0\})$ and $|\nabla^j u_0| \leq C|x|^{-j}$
for all $j \geq 0$ and $x \neq 0$.

\subsection{Cracks}
\label{sec:sub:apd:crack}

We present the setting of cracks by following \cite{2020_MB_TH_CO_Cell_Size_Crack_Prop_M2NA}, which stems from the limitation of the continuum elasticity approaches to static crack problems. Similar with the discussions of dislocations, we introduce the following CLE
\begin{align}\label{CLE_crack}
\nonumber
- {\rm div}~(\mathbb{C}:\nabla u) = 0 \qquad &\text{in} ~~ \R^2\setminus \Gamma,
\\
(\mathbb{C} : \nabla u)\nu = 0 \qquad &\text{on} ~~ \Gamma,
\end{align}
supplied with a suitable boundary condition coupling to the bulk \cite{1998_FL_Dynamic_CAMBRIDGE}. It is well-known that near the crack
tip, the gradients of solutions to \eqref{CLE_crack} exhibit a persistent $1/\sqrt{r}$ behaviour, where $r$
is the distance from the crack tip (cf. \cite{1968_RJ_Mathematical_FA2T}). 

For Mode III (anti-plane) cracks we consider in the numerics, as discussed in \cite{2019_JB_MB_CO_Crystal_Symmetries_SIAMJMA}, the PDE \eqref{CLE_crack} then reduces to a Poisson equation, which has a canonical solution, given by
\[
u^{\rm lin}_k(x) = k\sqrt{r}\sin{\frac{\theta}{2}},
\]
where $(r,\theta)$ representing standard cylindrical polar coordinates centred at the crack tip. The scalar parameter $k$ corresponds to the (rescaled) stress intensity factor (SIF) \cite{2011_CS_Fracture_ACADEMIC}.

\section{Proof of Theorem~\ref{thm:interp}}
\label{sec:appendix:proof}
\begin{proof}
From the definitions \eqref{eq:exact_resF} and \eqref{eq:approxresF}, for sufficiently large $R_{\rm a}$, we have 
    \begin{align*}
        \big|\tilde{\eta}^{\ac}(u_h) - \eta^{\ac}(u_h)\big| &= \sum_{T \in \T_h} \Big( \omega(T) \log(2+|\tilde{{\ell}}(T)|) \cdot \big|\F^{\rm a}_{\tilde{{\ell}}(T)}(I_{\rm a}u_{h})\big| - \sum_{\ell \in T} \log(2+|\ell|) \cdot \big|\F^{\rm a}_{\ell}(I_{\rm a}u_{h})\big|  \Big)  \\
        &\lesssim  \sum_{T \in \T_h} (2+|\tilde{{\ell}}(T)|)^{-2} \cdot \|\nabla^2 \widetilde{\F}^{\rm a}(I_{\rm a}u_{h})\|_{L^2(T)} \\
        &\lesssim  \log(R_{\Omega}) \cdot \|\nabla^2 \widetilde{\F}^{\rm a}(I_{\rm a}u_{h})\|_{L^2(\Omega)},
        \end{align*}
        where the first inequality follows from the standard interpolation error estimate.
\end{proof}

\section{Numerical Supplements}
\label{sec:appendix:numerics}

To validate the central assumption of Theorem \ref{thm:res}, i.e., that $u_h$ exhibits the same decay estimates as $u$, we present numerical results for all blended QC methods used to model various defect cases in this study. The setting details for the numerical experiments can be found in Section~\ref{sec:numerics}. The decay results for all defect cases are plotted in the following figures, where the $x$-axis represents the distance from the "center" of the defects. In all cases, we observe that $u_h$ exhibits decay rates that are consistent with those of $u$, as assumed in Theorem \ref{thm:res}.

\begin{figure}[htb]
	\centering 
	\subfloat[BQCE]{
		\label{fig:decay_mcrack_BQCE}
		\includegraphics[height=4.4cm]{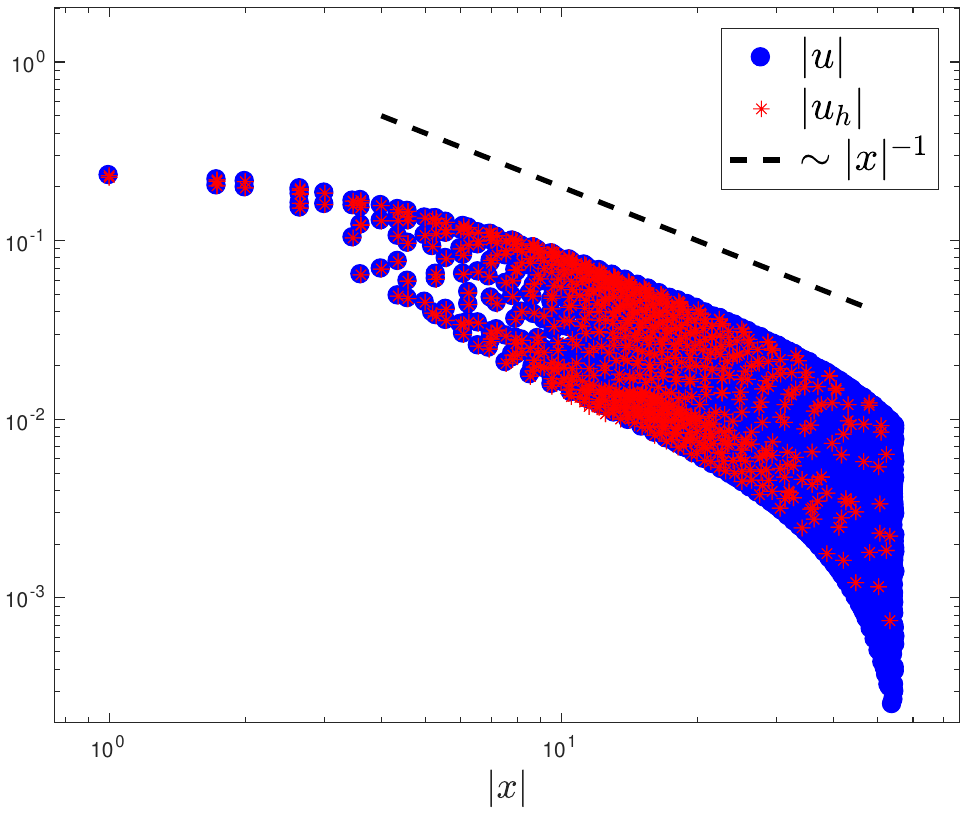}}
	\hspace{0.1cm} 
	\subfloat[BQCF]{
		\label{fig:decay_mcrack_BQCF}
		\includegraphics[height=4.4cm]{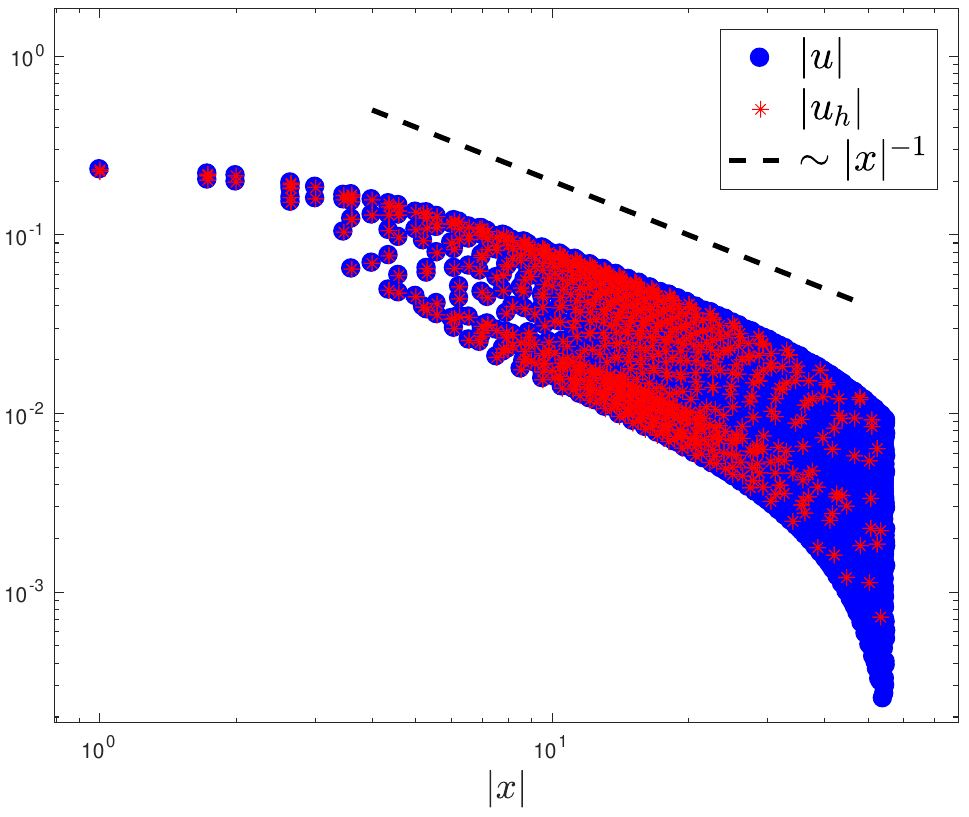}}
	\hspace{0.1cm} 
		\subfloat[BGFC]{
		\label{fig:decay_mcrack_BGFC} 
		\includegraphics[height=4.4cm]{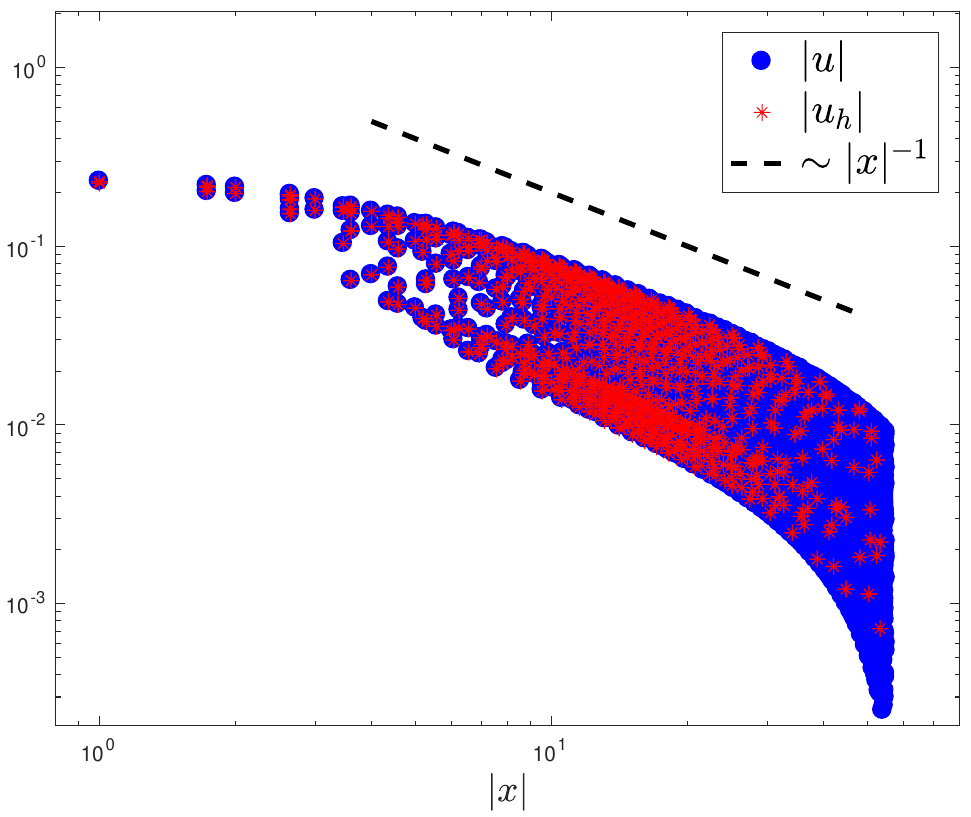}}
	\caption{Numerical verification of the decay rate of $u$ and $u_h$ for micro-crack.}
	\label{fig:decay_rates_uh_mcrack}
\end{figure}

\begin{figure}[htb]
	\centering 
	\subfloat[BQCE/BGFC]{
		\label{fig:decay_screw_E}
		\includegraphics[height=4.5cm]{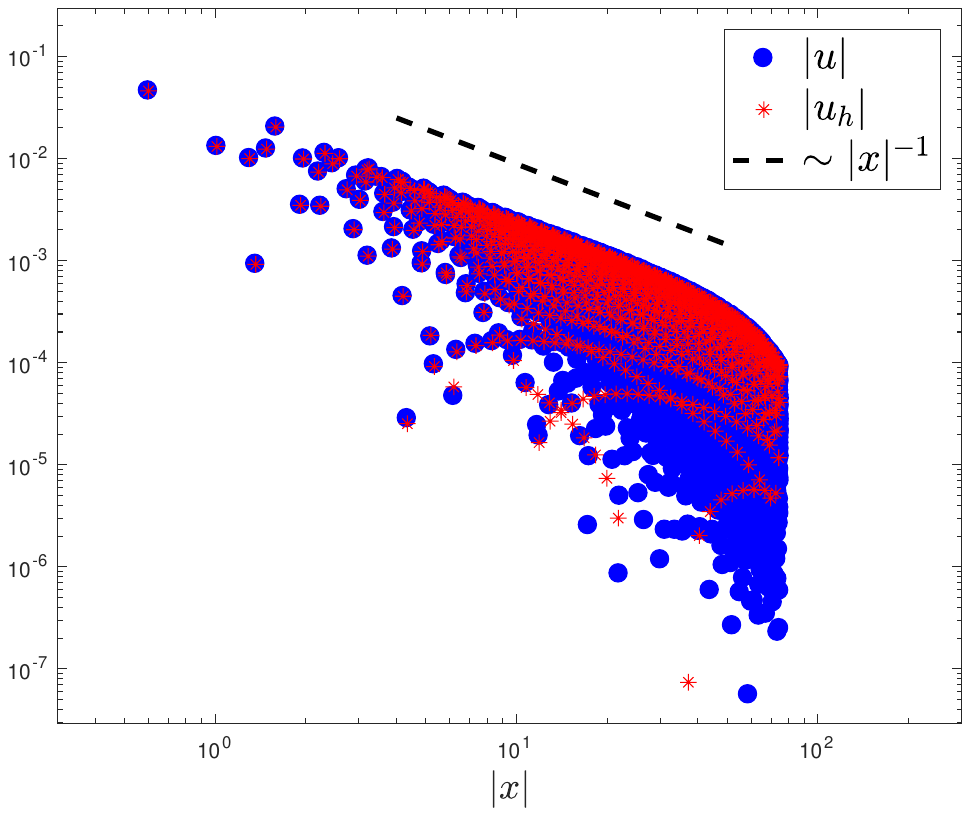}}
	\hspace{0.4cm} 
	\subfloat[BQCF]{
		\label{fig:decay_screw_F}
		\includegraphics[height=4.5cm]{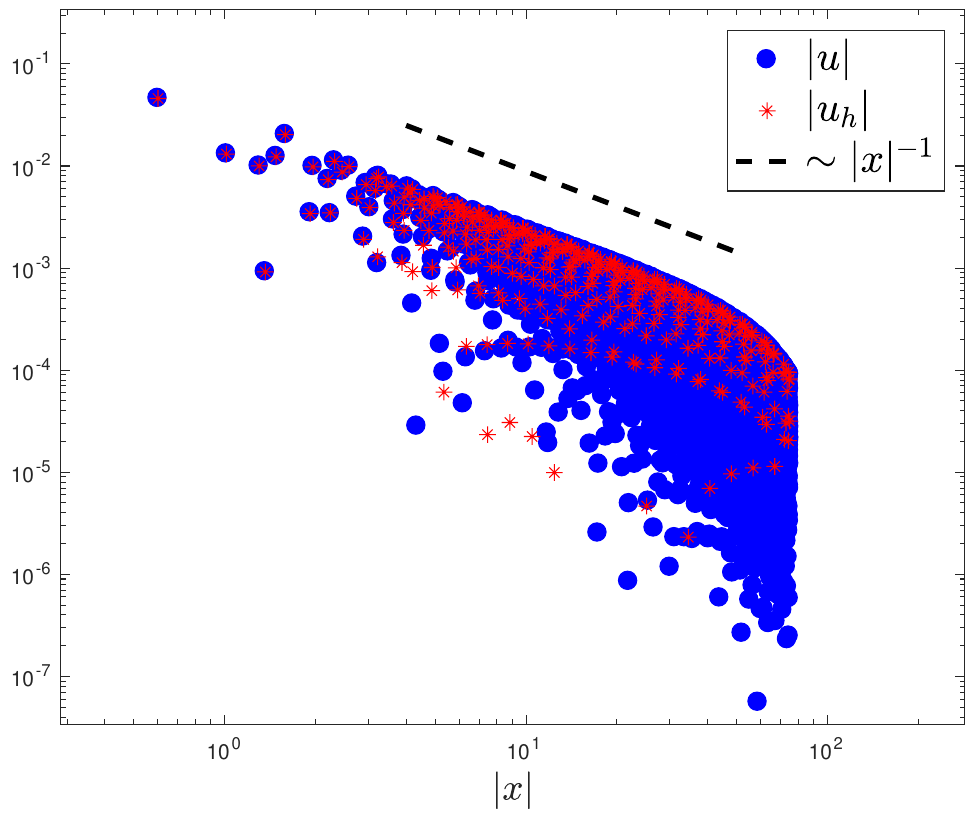}}
	\caption{Numerical verification of the decay rate of $u$ and $u_h$ for anti-plane screw dislocation. In this case BGFC is identity to BQCE due to the artefact of the anti-plane setting.}
	\label{fig:decay_rates_uh_screw}
\end{figure}

\begin{figure}[htb]
	\centering 
	\subfloat[BQCE]{
		\label{fig:decay_crack_BQCE}
		\includegraphics[height=4.4cm]{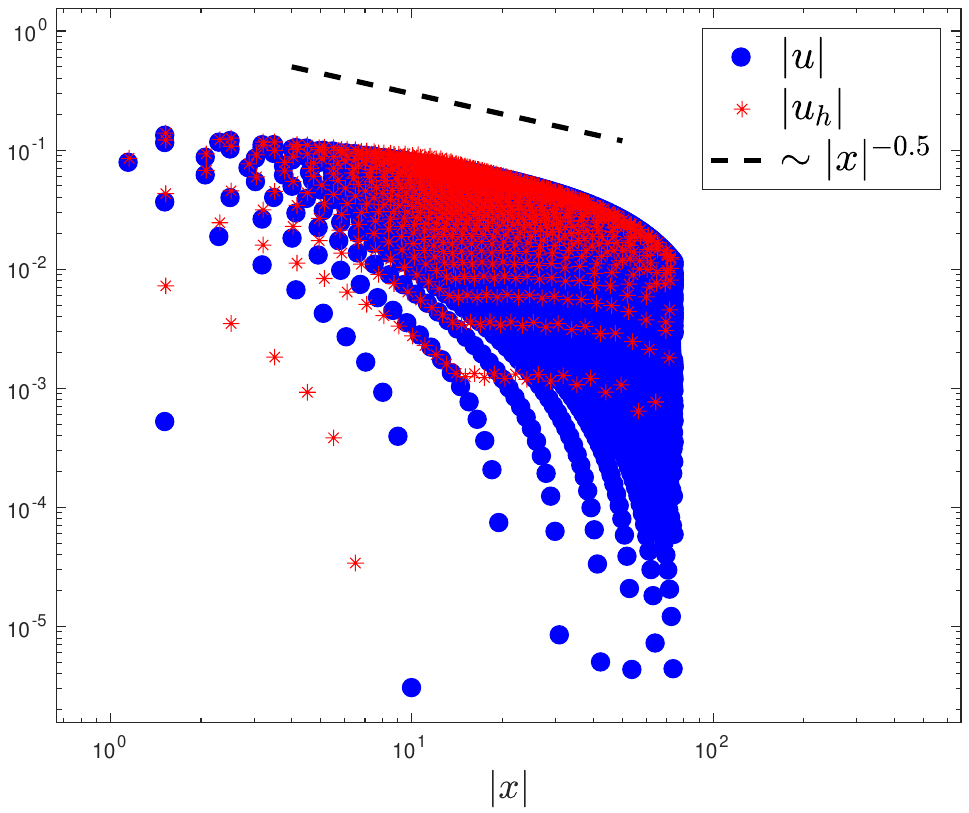}}
	\hspace{0.1cm} 
	\subfloat[BQCF]{
		\label{fig:decay_crack_BQCF}
		\includegraphics[height=4.4cm]{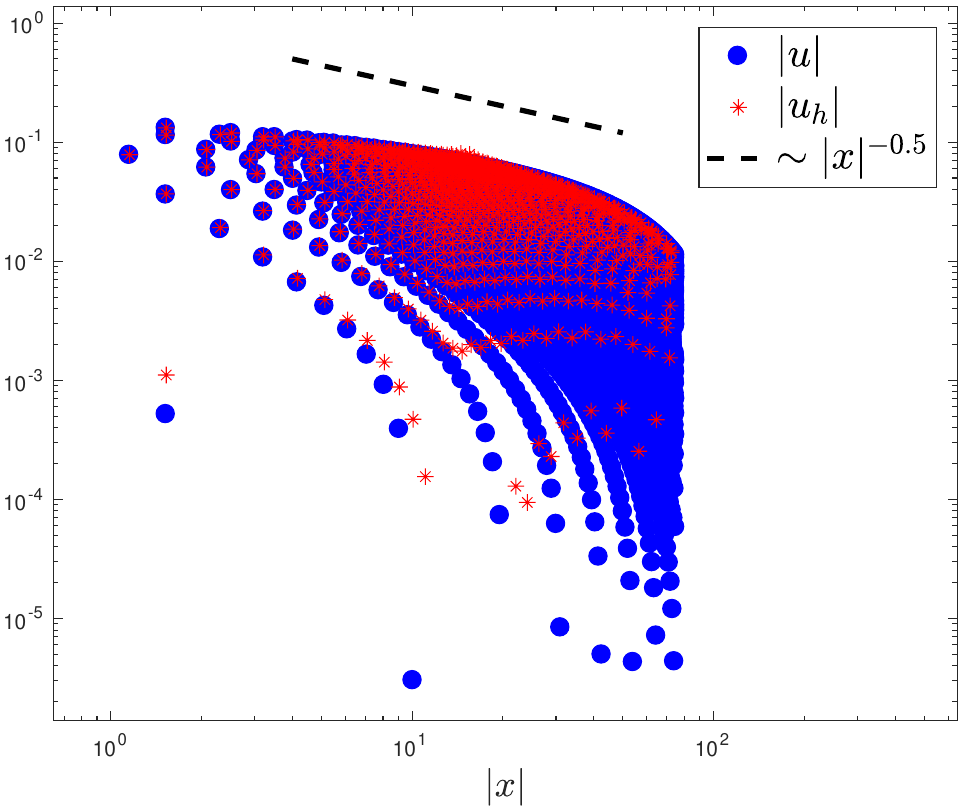}}
	\hspace{0.1cm} 
		\subfloat[BGFC]{
		\label{fig:decay_crack_BGFC} 
		\includegraphics[height=4.4cm]{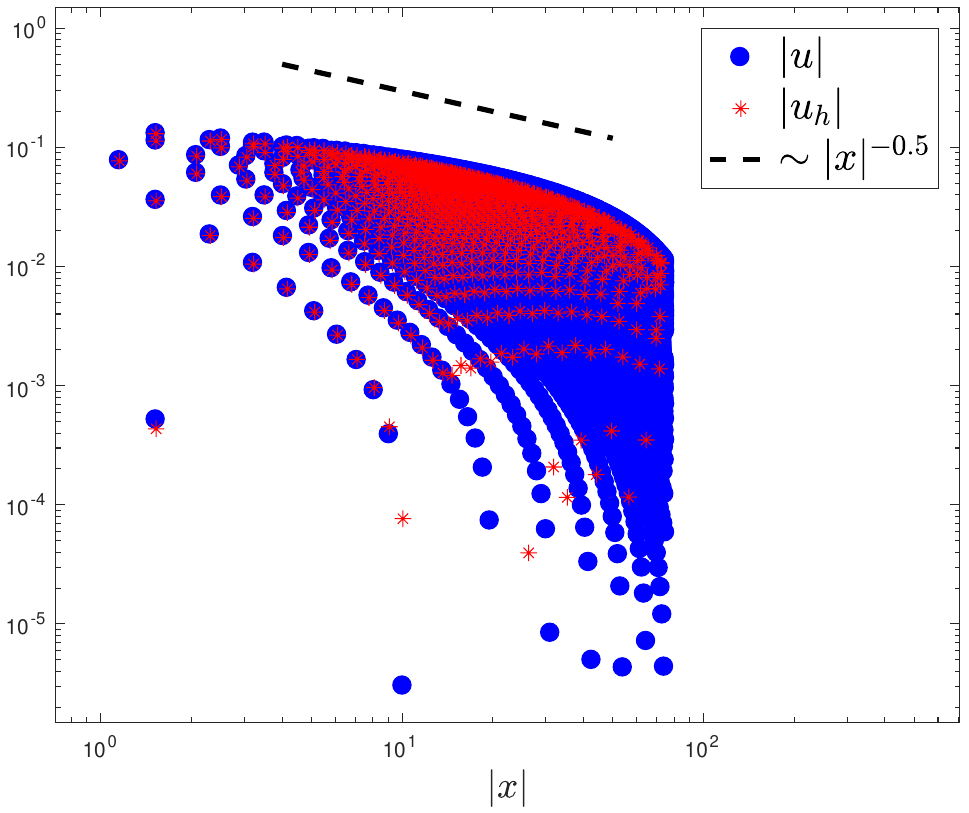}}
	\caption{Numerical verification of the decay rate of $u$ and $u_h$ for anti-plane crack.}
	\label{fig:decay_rates_uh_crack}
\end{figure}

	
	

	\bibliographystyle{plain}
	\bibliography{MS_Coupling_202502.bib}

\begin{thebibliography}{10}

\bibitem{2000_MA_TO_a_Post_Est_FEM}
Mark Ainsworth and J~Tinsley Oden.
\newblock A posteriori error estimation in finite element analysis.
\newblock {\em Comput. Methods Appl. Mech. Engrg.}, 142(1-2):1--88, 1997.

\bibitem{2015_JA_GV_DK_Summation_QCE_JMPS}
J.~S. Amelang, G.~N. Venturini, and D.~M. Kochmann.
\newblock Summation rules for a fully nonlocal energy-based quasicontinuum
  method.
\newblock {\em J. Mech. Phys. Solids}, 82:378--413, 2015.

\bibitem{2007_MA_ML_Goal_Oriented_Adaptive_AC_IJMCE}
Marcel Arndt and Mitchell Luskin.
\newblock Goal-oriented atomistic-continuum adaptivity for the quasicontinuum
  approximation.
\newblock {\em Int. J. Multiscale Comput. Eng.}, 5(5):407--415, 2007.

\bibitem{2008_MA_ML_Adaptiv_AC_FK_Model_MMS}
Marcel Arndt and Mitchell Luskin.
\newblock Error estimation and atomistic-continuum adaptivity for the
  quasicontinuum approximation of a frenkel-kontorova model.
\newblock {\em Multiscale Model. Simul.}, 7(1):147--170, 2008.

\bibitem{2008_MA_ML_Goal_Oriented_Mesh_Refinement_AC_CMAME}
Marcel Arndt and Mitchell Luskin.
\newblock Goal-oriented adaptive mesh refinement for the quasicontinuum
  approximation of a frenkel-kontorova model.
\newblock {\em Comput. Methods Appl. Mech. Engrg.}, 197(49-50):4298--4306,
  2008.

\bibitem{2008_SB_MP_PB_MG_AC_Blending_MMS}
Santiago Badia, Michael~L. Parks, Pavel~B. Bochev, Max Gunzburger, and
  Richard~B. Lehoucq.
\newblock On atomistic-to-continuum coupling by blending.
\newblock {\em Multiscale Model. Simul.}, 7(1):381--406, 2008.

\bibitem{2003_WB_RR_Adaptive_FEM}
Wolfgang Bangerth and Rolf Rannacher.
\newblock {\em Adaptive Finite Element Methods for Differential Equations}.
\newblock Birkhauser, 2003.

\bibitem{2008_PB_BD_NE_TO_SP_Arlequin_AC_CM}
Paul~T. Bauman, H~Ben Dhia, Nadia Elkhodja, J~Tinsley Oden, and Serge
  Prudhomme.
\newblock On the application of the arlequin method to the coupling of particle
  and continuum models.
\newblock {\em Comput. Mech.}, 42:511--530, 2008.

\bibitem{2003_TB_SX_Couple_CM_MM_IJMCE}
T.~Belytschko and S.~P. Xiao.
\newblock Coupling methods for continuum model with molecular model.
\newblock {\em Int. J. Multiscale Comput. Eng.}, 1(1):115--126, 2003.

\bibitem{2016_EB_TC_Adapt_Molecular_Mechanics_CMAME}
E.~Biyikli and A.~C. To.
\newblock Multiresolution molecular mechanics: adaptive analysis.
\newblock {\em Comput. Methods Appl. Mech. Engrg.}, 305:682--702, 2016.

\bibitem{2007_DB_FEM}
Dietrich Braess.
\newblock {\em Finite Elements, Theory, Fast Solvers, and Applications in Solid
  Mechanics}.
\newblock Cambridge University Press, 3rd edition, 2007.

\bibitem{2019_JB_MB_CO_Crystal_Symmetries_SIAMJMA}
J.~Braun, M.~Buze, and C.~Ortner.
\newblock The effect of crystal symmetries on the locality of screw dislocation
  cores.
\newblock {\em SIAM J. Math. Anal.}, 51(2):1108--1136, 2019.

\bibitem{2021_JB_TH_CO_Def_Exp_arXiv}
J.~Braun, T.~Hudson, and C.~Ortner.
\newblock Asymptotic expansion of the elastic far-field of a crystalline
  defect.
\newblock {\em Arch. Ration. Mech. Anal.}, 245:1437--1490, 2021.

\bibitem{2019_BM_TM_CO_Anal_Antiplane_Fracture_M3AS}
Maciej Buze, Thomas Hudson, and Christoph Ortner.
\newblock Analysis of an atomistic model for anti-plane fracture.
\newblock {\em Math. Models Methods Appl. Sci.}, 29(13):2469--2521, 2019.

\bibitem{2020_MB_TH_CO_Cell_Size_Crack_Prop_M2NA}
Maciej Buze, Thomas Hudson, and Christoph Ortner.
\newblock Analysis of cell size effects in atomistic crack propagation.
\newblock {\em ESAIM Math. Model. Numer. Anal.}, 54(6):1821--1847, 2020.

\bibitem{2019_HC_ML_HW_YW_LZ_Adaptive_QMMM_CMAME}
Huajie Chen, Mingjie Liao, Hao Wang, Yangshuai Wang, and Lei Zhang.
\newblock Adaptive qm/mm coupling for crystalline defects.
\newblock {\em Comput. Methods Appl. Mech. Engrg.}, 354:351--368, 2019.

\bibitem{2016_HC_CO_QM_MM_P2_MMS}
Huajie Chen and Christoph Ortner.
\newblock Qm/mm methods for crystalline defects. part 2: Consistent energy and
  force-mixing.
\newblock {\em Multiscale Model. Simul.}, 15(1):184--214, 2017.

\bibitem{2011_BD_LC_TO_SP_Adaptive_AC_Optimal_Control_CMAME}
H.~Ben Dhia, Ludovic Chamoin, J.~Tinsley Oden, and Serge Prudhomme.
\newblock A new adaptive modeling strategy based on optimal control for
  atomic-to-continuum coupling simulations.
\newblock {\em Comput. Methods Appl. Mech. Engrg.}, 200(37-40):2675--2696,
  2011.

\bibitem{2008_MD_ML_Ana_Force_Based_QC_M2NA}
Matthew Dobson and Mitchell Luskin.
\newblock Analysis of a force-based quasicontinuum approximation.
\newblock {\em ESAIM Math. Model. Numer. Anal.}, 42(1):113--139, 2008.

\bibitem{2009_MD_ML_Optimal_Order_SIMNUM}
Matthew Dobson and Mitchell Luskin.
\newblock An optimal order error analysis of the one-dimensional quasicontinuum
  approximation.
\newblock {\em SIAM J. Numer. Anal.}, 47(4):2455--2475, 2009.

\bibitem{1996_WD_Adaptive_Poisson_SIAMNUM}
Willy D{\"o}rfler.
\newblock A convergent adaptive algorithm for poisson's equation.
\newblock {\em SIAM J. Numer. Anal.}, 33(3):1106--1124, 1996.

\bibitem{2007_EM_CB_StaCry_AMAS}
W.~E and P.~Ming.
\newblock Cauchy-born rule and the stability of crystalline solids: dynamic
  problems.
\newblock {\em Acta Math. Appl. Sin. Engl. Ser.}, 23(4):529--550, 2007.

\bibitem{2006_WE_JL_ZY_GRC_PRB}
Weinan E, Jianfeng Lu, and Jerry~Z. Yang.
\newblock Uniform accuracy of the quasicontinuum method.
\newblock {\em Phys. Rev. B}, 74:214115:1--13, 2006.

\bibitem{2016_EV_CO_AS_Boundary_Conditions_for_Crystal_Lattice_ARMA}
V.~Ehrlacher, C.~Ortner, and A.~V. Shapeev.
\newblock Analysis of boundary conditions for crystal defect atomistic
  simulations.
\newblock {\em Arch. Ration. Mech. Anal.}, 222:1217--1268, 2016.

\bibitem{2021_LF_LZ_3D_BGFC_CICP}
Lidong Fang and Lei Zhang.
\newblock Blended ghost force correction method for 3d crystalline defects.
\newblock {\em Commun. Comput. Phys.}, 29(4):1246--1272, APR 2021.

\bibitem{1998_FL_Dynamic_CAMBRIDGE}
L.~Freund.
\newblock {\em Dynamic Fracture Mechanics}.
\newblock Cambridge University Press, 1998.

\bibitem{2023_YW_LZ_Adaptive_Multigrid_AC_3D_JCP}
Kejie Fu, Mingjie Liao, Yangshuai Wang, Jianjun Chen, and Lei Zhang.
\newblock Adaptive multigrid strategy for geometry optimization of large-scale
  three dimensional molecular mechanics.
\newblock {\em J. Comput. Phys.}, 485:112113:1--18, 2023.

\bibitem{2021_AG_WC_Anal_FBC_MSMSE}
Ankit Gupta and William~A. Curtin.
\newblock Analysis of the flexible boundary condition method.
\newblock {\em Modelling Simul. Mater. Sci. Eng.}, 29(8):085002:1--18, 2021.

\bibitem{2021_MH_Anal_FBC_AC_MMS}
Max Hodapp.
\newblock Analysis of a sinclair-type domain decoomposition solver for
  atomistic/continuum coupling.
\newblock {\em Multiscale Model. Simul.}, 19(4):1499--1537, 2021.

\bibitem{2001_Knap_Ortiz_QCM_JMPS}
J.~Knap and M.~Ortiz.
\newblock An analysis of the quasicontinuum method.
\newblock {\em J. Mech. Phys. Solids}, 49:1899--1923, 2001.

\bibitem{1989_Kohlhoff_Coupled_AtoMod_ASM}
Stephan Kohlhoff and Siegfried Schmauder.
\newblock A new method for coupled elastic-atomistic modelling.
\newblock {\em Atomistic Simulation of Materials}, pages 411--418, 1989.

\bibitem{2017_HL_PM_GSC_Finite_Range_CiCP}
Hongliang Li and Pingbing Ming.
\newblock Analysis of geometrically consistent schemes with finite range
  interaction.
\newblock {\em Commun. Comput. Phys.}, 22(5):1333--1361, 2017.

\bibitem{2016_XL_CO_AS_BK_BQC_Anal_2D_NUMMATH}
Xingjie Li, Christoph Ortner, Alexander~V. Shapeev, and Brian~Van Koten.
\newblock Analysis of blended atomistic/continuum hybrid methods.
\newblock {\em Numer. Math.}, 134(2):275--326, 2016.

\bibitem{2014_HL_ML_CO_AS_BQCF_Compt_CMAME}
Xingjie~Helen Li, Mitchell Luskin, Christoph Ortner, and Alexander~V. Shapeev.
\newblock Theory-based benchmarking of the blended force-based quasicontinuum
  method.
\newblock {\em Comput. Methods Appl. Mech. Engrg.}, 268:763--781, 2014.

\bibitem{2020_ML_PL_LZ_Finite_Range_A_Post_2D_CICP}
Mingjie Liao, Ping Lin, and Lei Zhang.
\newblock A posteriori error estimate and adaptive mesh refinement algorithm
  for atomistic/continuum coupling with finite range interactions in two
  dimensions.
\newblock {\em Commun. Comput. Phys.}, 27(1):198--226, 2020.

\bibitem{2007_PL_QCL_2D_SIAMNUM}
Ping Lin.
\newblock Convergence analysis of a quasi-continuum approximation for a
  two-dimensional material without defects.
\newblock {\em SIAM J. Numer. Anal.}, 45(1):313--332, 2007.

\bibitem{2006_WL_HP_DQ_EK_HK_GW_Bridge_Scale_CMAME}
Wing~Kam Liu, Harold~S. Park, Dong Qian, Eduard~G. Karpov, Hiroshi Kadowaki,
  and Gregory~J. Wagner.
\newblock Bridging scale methods for nanomechanics and materials.
\newblock {\em Comput. Methods Appl. Mech. Engrg.}, 195(13-16):1407--1421,
  2006.

\bibitem{2012_JL_PM_Convergence_BQCF_3D_No_Defects_CPAM}
Jianfeng Lu and Pingbing Ming.
\newblock Convergence of a force-based hybrid method in three dimensions.
\newblock {\em Comm. Pure Appl. Math.}, 66(1):83--108, 2012.

\bibitem{2014_JL_PM_Convergence_QCF_2D_Planar_Inter_No_Defects_SIAMNUM}
Jianfeng Lu and Pingbing Ming.
\newblock Stability of a force-based hybrid method with planar sharp interface.
\newblock {\em SIAM J. Numer. Anal.}, 52(4):2005--2026, 2014.

\bibitem{2013_ML_CO_BK_BQCE_CMAME}
M.~Luskin, C.~Ortner, and B.~Van Koten.
\newblock Formulation and optimization of the energy-based blended
  quasicontinuum method.
\newblock {\em Comput. Methods Appl. Mech. Engrg.}, 253:160--168, 2013.

\bibitem{2013_ML_CO_AC_Coupling_ACTANUM}
Mitchell Luskin and Christoph Ortner.
\newblock Atomistic-to-continuum coupling.
\newblock {\em Acta Numer.}, 22:397--508, 2013.

\bibitem{2003_RM_ET_QCM_JCAMD}
R.~E. Miller and E.~B. Tadmor.
\newblock The quasicontinuum method: Overview, applications and current
  directions.
\newblock {\em Journal of Computer-Aided Materials Design}, 9:203--239, 2003.

\bibitem{2009_Miller_Tadmor_Unified_Framework_Benchmark_MSMSE}
Ronald~E. Miller and E.~B. Tadmor.
\newblock A unified framework and performance benchmark of fourteen multiscale
  atomistic/continuum coupling methods.
\newblock {\em Modelling Simul. Mater. Sci. Eng.}, 17(5):053001:1--51, 2009.

\bibitem{2009_PM_ZY_1D_QC_Nonlocal_MMS}
Pingbing Ming and Jerry~Zhijian Yang.
\newblock Analysis of a one-dimensional nonlocal quasi-contunuum method.
\newblock {\em Multiscale Model. Simul.}, 7(4):1838--1875, 2009.

\bibitem{2002_TO_SP_Est_Modeling_Error_JCP}
J.~Tinsley Oden and Serge Prudhomme.
\newblock Estimation of modeling error in computational mechanics.
\newblock {\em J. Comput. Phys.}, 182(2):496--515, 2002.

\bibitem{2006_TO_SP_AR_PB_MM_Adaptive_Model_SISC}
J.~Tinsley Oden, Serge Prudhomme, Albert Romkes, and Paul~T. Bauman.
\newblock Multiscale modeling of physical phenomena: Adaptive control of
  models.
\newblock {\em SIAM J. Sci. Comput.}, 28(6):2359--2389, 2006.

\bibitem{2014_DO_PB_ML_Opt_Based_AtC_SINNUM}
Derek Olson, Pavel~B. Bochev, Mitchell Luskin, and Alexander~V. Shapeev.
\newblock An optimization-based atomistic-to-continuum coupling method.
\newblock {\em SIAM J. Numer.Anal.}, 52(4):2183--2204, 2014.

\bibitem{2018_DO_XL_CO_BK_Force_Based_AC_Complex_Lattices_NUMMATH}
Derek Olson, Xingjie Li, Christoph Ortner, and Brian~Van Koten.
\newblock Force-based atomistic/continuum blending for multilattices.
\newblock {\em Numer. Math.}, 140(3):703--754, 2018.

\bibitem{2016_DO_AS_ML_Optimization_Based_AC_M2NA}
Derek Olson, Alexander~V. Shapeev, Pavel~B. Bochev, and Mitchell Luskin.
\newblock Analysis of an optimization-based atomistic-to-continuum coupling
  method for point defects.
\newblock {\em ESAIM Math. Model. Numer. Anal.}, 50(1):1--41, JAN-FEB 2016.

\bibitem{1996_AC_Ana_Solid_Defect_PMA}
M.~Ortiz, R.~Phillips, and E.~B. Tadmor.
\newblock Quasicontinuum analysis of defects in solids.
\newblock {\em Philos Mag A}, 73(6):1529--1563, 1996.

\bibitem{2014_CO_LZ_GRAC_Coeff_Optim_CMAME}
C.~Ortner and L.~Zhang.
\newblock Energy-based atomistic-to-continuum coupling without ghost forces.
\newblock {\em Comput. Methods Appl. Mech. Engrg.}, 279:29--45, 2014.

\bibitem{2011_CO_1D_QNL_MATHCOMP}
Christoph Ortner.
\newblock A priori and a posteriori analysis of the quasinonlocal
  quasicontinuum method in 1d.
\newblock {\em Math. Comp.}, 80(275):1265--1285, 2011.

\bibitem{2012_CO_Patch_Test_2D_M2NA}
Christoph Ortner.
\newblock The role of the patch test in 2d atomistic-to-continuum coupling
  methods.
\newblock {\em ESAIM Math. Model. Numer. Anal.}, 46(6):1275--1319, 2012.

\bibitem{2013_CO_AS_ACC_2D_MATHCOMP}
Christoph Ortner and Alexander~V. Shapeev.
\newblock Analysis of an energy-based atomistic/continuum coupling
  approximationof a vacancy in the 2d triangular lattice.
\newblock {\em Math. Comp.}, 82:2191--2236, 2013.

\bibitem{2014_CO_AS_LZ_Stabilization_MMS}
Christoph Ortner, Alexander~V. Shapeev, and Lei Zhang.
\newblock (in-)stability and stabilization of qnl-type atomistic-to-continuum
  coupling methods.
\newblock {\em Multiscale Model. Simul.}, 12(3):1258--1293, 2014.

\bibitem{2013_CO_FT_Cauchy_Born_ARMA}
Christoph Ortner and F.~Theil.
\newblock Justification of the cauchy--born approximation of elastodynamics.
\newblock {\em Arch. Rat. Mech. Anal.}, 207(3):1025--1073, 2013.

\bibitem{2011_CO_HW_QC_A_Priori_1D_M3AS}
Christoph Ortner and Hao Wang.
\newblock A priori error estimates for energy-based quasicontinuum
  approximations of a periodic chain.
\newblock {\em Math. Models Methods Appl. Sci.}, 21(12):2491--2521, 2011.

\bibitem{2014_CO_HW_A_Post_ACC_IMANUM}
Christoph Ortner and Hao Wang.
\newblock A posteriori error control for a quasi-continuum approximation of a
  periodic chain.
\newblock {\em IMA J. Numer. Anal.}, 34(3):977--1001, 2014.

\bibitem{2012_CO_LZ_GRAC_Construction_SIAMNUM}
Christoph Ortner and Lei Zhang.
\newblock Construction and sharp consistency estimates for atomistic/continuum
  coupling methods with general interfaces: A two-dimensional model problem.
\newblock {\em SIAM J. Numer. Anal.}, 50(6):2940--2965, 2012.

\bibitem{2016_CO_LZ_GForce_Removal_SISC}
Christoph Ortner and Lei Zhang.
\newblock Atomistic/continuum blending with ghost force correction.
\newblock {\em SIAM J. Sci. Comput.}, 38(1):A346--A375, 2016.

\bibitem{2008_SP_BD_PB_TO_Anal_Coupling_Error_Arlequin_CMAME}
H.~Ben Prudhomme, S.and~Dhia, P.~T. Bauman, N~Elkhodja, and J.~T. Oden.
\newblock Computational analysis of modeling error for the coupling of particle
  and continuum models by the arlequin method.
\newblock {\em Comput. Methods Appl. Mech. Engrg.}, 197(41-42):3399--3409,
  2008.

\bibitem{2009_SP_LC_BD_PB_Adaptive_AC_Model_Error_2D_CMAME}
S.~Prudhomme, Ludovic Chamoin, H.~Ben Dhia, and Paul~T. Bauman.
\newblock An adaptive strategy for the control of modeling error in
  two-dimensional atomic-to-continuum coupling simulations.
\newblock {\em Comput. Methods Appl. Mech. Engrg.}, 198(21-26):1887--1901,
  2009.

\bibitem{2006_SP_PB_TO_Error_Control_Molecular_Statics_IJMCE}
Serge Prudhomme, Paul~T. Bauman, and J.~Tinsley Oden.
\newblock Error control for molecular statics problems.
\newblock {\em Int. J. Multiscale Comput. Eng.}, 4(5-6):647--662, 2006.

\bibitem{1968_RJ_Mathematical_FA2T}
J.~Rice et~al.
\newblock Mathematical analysis in the mechanics of fracture.
\newblock {\em Fract. Adv. Treat.}, 2:191--311, 1968.

\bibitem{1999_JS_Fast_Marching_SIAMREVIEW}
James~A Sethian.
\newblock Fast marching methods.
\newblock {\em SIAM Rev.}, 41(2):199--235, 1999.

\bibitem{1999_VS_RM_ETadmor_MOrtiz_AFEM_QC_JMPS}
V.~B. Shenoy, R.~Miller, E.~B. Tadmor, D.~Rodney, R.~Phillips, and M.~Ortiz.
\newblock An adaptive finite element approach to atomic-scale mechanics - the
  quasicontinuum method.
\newblock {\em J. Mech. Phys. Solids}, 47(3):611--642, 1999.

\bibitem{2004_Shimokawa_QCM_ErrAna_PRB}
T.~Shimokawa, J.~J. Mortensen, J.~Schiotz, and K.~W. Jacobsen.
\newblock Matching conditions in the quasicontinuum method: removal of the
  error introduced at the interface between the coarse-grained and fully
  atomistic region.
\newblock {\em {Phys. Rev. B}}, 69:214104:1--10, 2004.

\bibitem{2011_CS_Fracture_ACADEMIC}
Chin-Teh Sun and Zhihe Jin.
\newblock {\em Fracture mechanics}.
\newblock Academic Press, 2011.

\bibitem{2005_ST_TH_WKL_Bridging_Sacle_IJNME}
S.~Tang, T.~Y. Hou, and W.~K. Liu.
\newblock A mathematical framework of the bridging scale method.
\newblock {\em Int. J. Numer. Meth. Engng.}, 65:1688--1713, October 2006.

\bibitem{2016_IT_JA_LM_DK_QCM_Coarse_Grain_IJNME}
I.~Tembhekar, J.~S. Amelang, L.~Munk, and D.~M. Kochmann.
\newblock Automatic adaptivity in the fully-nonlocal quasicontinuum method for
  coarse-grained atomistic simulations.
\newblock {\em Int. J. Numer. Meth. Engng.}, 110:878--900, 2017.

\bibitem{2020_EG_Roadmap_Multiscale_Modeling_MSMSE}
Erik Van Der~Giessen, Peter~A Schultz, Nicolas Bertin, Vasily~V Bulatov, Wei
  Cai, G{\'a}bor Cs{\'a}nyi, Stephen~M Foiles, Marc~GD Geers, Carlos
  Gonz{\'a}lez, Markus H{\"u}tter, et~al.
\newblock Roadmap on multiscale materials modeling.
\newblock {\em Model. Simul. Mat. Sci. Eng.}, 28(4):043001:1--61, 2020.

\bibitem{1996_RV_A_Post_Adapt}
R\"{u}diger Verf\"{u}rth.
\newblock {\em A Review of A Posteriori Error Estimation and Adaptive
  Mesh-Refinement Techniques}.
\newblock Wiley and Teubner, 1996.

\bibitem{2018_HW_ML_PL_LZ_A_Post_GRAC_2D_SISC}
Hao Wang, Mingjie Liao, Ping Lin, and Lei Zhang.
\newblock A posteriori error estimation and adaptive algorithm for
  atomistic/continuum coupling in two dimensions.
\newblock {\em SIAM J. Sci. Comput.}, 40(4):A2087--A2119, 2018.

\bibitem{2019_HW_SL_FY_A_Post_QCF_1D_NMTMA}
Hao Wang, Shaohui Liu, and Feng Yang.
\newblock A posteriori error control for three typical force-based
  atomistic-to-continuum coupling methods for an atomistic chain.
\newblock {\em Numer. Math. Theor. Meth. Appl.}, 12(1):233--264, 2019.

\bibitem{2018_HW_SY_Efficiency_A_Post_1D_MMS}
Hao Wang and Siyao Yang.
\newblock Analysis of the residual-type and the gradient recovery-type a
  posteriori error estimators for a consistent atomistic-to-continuum coupling
  method in one-dimension.
\newblock {\em Multiscale Model. Simul.}, 16(2):679--709, 2018.

\bibitem{2021_YW_HC_ML_CO_HW_LZ_A_Post_QMMM_SISC}
Yangshuai Wang, Huajie Chen, Mingjie Liao, Christoph Ortner, Hao Wang, and Lei
  Zhang.
\newblock A posteriori error estimates for adaptive qm/mm coupling methods.
\newblock {\em SIAM J. Sci. Comput.}, 43(4):A2785--A2808, 2021.

\bibitem{2023_YW_HW_Efficient_Adaptivity_AC_JSC}
Yangshuai Wang and Hao Wang.
\newblock Efficient a posteriori error control of a concurrent multiscale
  method with sharp interface for crystalline defects.
\newblock {\em J. Sci. Comput.}, 97(2):51:1--30, October 2023.

\bibitem{2004_SX_TB_Bridging_Domain_CMAME}
S.~P. Xiao and T.~Belytschko.
\newblock A bridging domain method for coupling continua with molecular
  dynamics.
\newblock {\em Comput. Methods Appl. Mech. Engrg.}, 193(17-20):1645--1669,
  2004.

\bibitem{2013_QY_EB_AT_Multiresolution_MM_CMAME}
Qingcheng Yang, Emre Biyikli, and Albert~C. To.
\newblock Multiresolution molecular mechanics: statics.
\newblock {\em Comput. Methods Appl. Mech. Engrg.}, 258:26--38, 2013.

\end{thebibliography}
	
\end{document}